\def\rr{{\mathbb R}}
\def\rn{{{\rr}^n}}
\def\zz{{\mathbb Z}}
\def\cc{{\mathbb C}}
\def\nn{{\mathbb N}}
\def\cp{{\mathcal P}}
\def\cq{{\mathcal Q}}
\def\cf{{\mathcal F}}
\def\cl{{\mathcal L}}
\def\cm{{\mathcal M}}
\def\cn{{\mathcal N}}
\def\mj{{\mathrm J}}
\def\fz{\infty}
\def\az{\alpha}
\def\bz{\beta}
\def\ez{\epsilon}
\def\gz{{\gamma}}
\def\lz{\lambda}
\def\boz{{\Omega}}
\def\epz{\epsilon}
\def\vz{\varphi}
\def\lf{\left}
\def\r{\right}
\def\la{\langle}
\def\ra{\rangle}
\def\hs{\hspace{0.26cm}}
\def\ls{\lesssim}
\def\noz{\nonumber}
\def\wz{\widetilde}
\def\wh{\widehat}
\def\st{\subset}
\def\com{\complement}
\def\bh{\backslash}
\def\cs{{\mathcal S}}
\def\dt{\,\frac{dt}{t}}
\def\dytn{\,\frac{dy\,dt}{t^{n+1}}}
\def\dist{\mathop\mathrm{\,dist\,}}
\def\supp{\mathop\mathrm{\,supp\,}}
\def\loc{{\mathop\mathrm{\,loc\,}}}
\def\q1{\wz q}
\def\Q1{q_1}
\def\lv{{L^{p(\cdot)}(\rn)}}
\def\wlv{W\!L^{p(\cdot)}(\rn)}
\def\hv{{H^{p(\cdot)}(\rn)}}
\def\ha{{H_{\mathrm{atom}}^{p(\cdot),q,s}(\rn)}}
\def\whv{{W\!H^{p(\cdot)}(\rn)}}
\def\wha{{W\!H_{\mathrm{atom}}^{p(\cdot),q,s}(\rn)}}
\def\whm{{W\!H_{\mathrm{mol}}^{p(\cdot),q,s,\epsilon}(\rn)}}
\def\whz{{W\!H_{\mathrm{atom}}^{p(\cdot),\fz,s}(\rn)}}
\def\Bij{{B_{i,j}}}
\def\bij{{b_{i,j}}}
\def\bije{{b_{i,j}^\ez}}
\def\lij{{\lz_{i,j}}}
\def\aij{{a_{i,j}}}
\def\mij{{m_{i,j}}}
\def\xij{{x_{i,j}}}
\def\rij{{r_{i,j}}}
\def\qij{{Q_{i,j}}}
\def\wqij{{\wz Q_{i,j}}}
\def\supp{{\mathop\mathrm{\,supp\,}}}
\def\dist{{\mathop\mathrm{\,dist\,}}}
\def\loc{{\mathop\mathrm{loc\,}}}
\newtheorem{thm}{Theorem}[section]
\newtheorem{prop}[thm]{Proposition}
\newtheorem{lem}[thm]{Lemma}
\newtheorem{cor}[thm]{Corollary}
\theoremstyle{definition}
\newtheorem{defn}[thm]{Definition}
\newtheorem{rem}[thm]{Remark}
\numberwithin{equation}{section}
\numberwithin{equation}{section}
\begin{document}

\arraycolsep=1pt

\title{\vspace{-2cm}\bf\Large Variable Weak Hardy Spaces and Their Applications
\footnotetext{\hspace{-0.35cm} 2010 {\it
Mathematics Subject Classification}. Primary 42B30;
Secondary 42B25, 42B20, 42B35, 46E30.
\endgraf {\it Key words and phrases.} Hardy space, variable exponent,
maximal function, Littlewood-Paley function, atom, molecule, Calder\'on-Zygmund operator.
\endgraf This project is supported by the National
Natural Science Foundation of China
(Grant Nos. 11571039, 11361020 and 11471042).}}
\author{Xianjie Yan, Dachun Yang\,\footnote{Corresponding author},\ \  Wen Yuan and Ciqiang Zhuo}
\date{ }
\maketitle

\vspace{-0.8cm}

\begin{center}
\begin{minipage}{13cm}
{\small {\bf Abstract}\quad
Let $p(\cdot):\ \mathbb R^n\to(0,\infty)$ be a variable exponent
function satisfying the globally log-H\"older continuous condition.
In this article, the authors first introduce the variable weak Hardy space
on $\mathbb R^n$, $W\!H^{p(\cdot)}(\mathbb R^n)$,
via the radial grand maximal function, and then establish its radial or
non-tangential maximal function characterizations.
Moreover, the authors also obtain various equivalent characterizations of $W\!H^{p(\cdot)}(\mathbb R^n)$,
respectively, by means of atoms, molecules,
the Lusin area function, the Littlewood-Paley $g$-function or $g_{\lambda}^\ast$-function.
As an application, the authors establish the boundedness of convolutional
$\delta$-type and non-convolutional $\gamma$-order Calder\'on-Zygmund
operators from $H^{p(\cdot)}(\mathbb R^n)$ to
$W\!H^{p(\cdot)}(\mathbb R^n)$ including the critical case
when $p_-={n}/{(n+\delta)}$ or when $p_-=n/(n+\gamma)$,
where $p_-:=\mathop\mathrm{ess\,inf}_{x\in \rn}p(x).$}
\end{minipage}
\end{center}

\section{Introduction\label{s-intro}}
\hskip\parindent
The main purpose of this article is to introduce and to investigate the
variable weak Hardy spaces on $\rn$. It is well known that
the classical weak Hardy spaces appear naturally in critical cases
of the study on the boundedness of operators.
Indeed,  the classical
weak Hardy space $W\!H^1(\rn)$ was originally introduced by
Fefferman and Soria \cite{fs86} when they tried to find out
the biggest space from which the Hilbert transform
is bounded to the weak Lebesgue space $W\!L^1(\rn)$. Via
establishing the $\fz$-atomic characterization of $W\!H^1(\rn)$, they obtained
the boundedness of some Calder\'on-Zygmund
operators from $W\!H^1(\rn)$ to $W\!L^1(\rn)$.
Moreover, it is also well known that, when studying the boundedness of some
singular integral operators, $H^p(\rn)$ is a good substitute of
the Lebesgue space $L^p(\rn)$ with $p\in(0,1]$; while
when studying the boundedness of operators
in the critical case, the Hardy spaces $H^p(\rn)$ are usually further replaced
by the weak Hardy space $W\!H^p(\rn)$. For example, if $\delta\in(0,1]$ and $T$
is a convolutional $\delta$-type Calder\'on-Zygmund operator with $T^*(1)=0$, where
$T^*$ denotes the \emph{adjoint operator} of $T$, then $T$ is
bounded on $H^p(\rn)$ for all $p\in({n}/{(n+\delta)},1]$ (see \cite{am86}),
but  may not be bounded on $H^{{n}/{(n+\delta)}}(\rn)$. For such an endpoint case,
Liu \cite{liu91} proved that $T$ is bounded from
$H^{{n}/{(n+\delta)}}(\rn)$ to $W\!H^{{n}/{(n+\delta)}}(\rn)$
via establishing the $\fz$-atomic characterization of the weak Hardy space $W\!H^p(\rn)$.

Furthermore, when studying the real interpolation between the Hardy space
$H^p(\rn)$ and the space $L^\fz(\rn)$, Fefferman et al. \cite{frs74} proved
that the weak Hardy spaces $W\!H^p(\rn)$ also naturally appear as the
intermediate spaces, which is another main motivation to
develop a real-variable theory of $W\!H^p(\rn)$. Recently, He \cite{h13}
and Grafakos and He \cite{gh14} further investigated
{vector-valued weak Hardy spaces} $H^{p,\fz}(\rn,\ell^2)$ with
$p\in(0,\fz)$.
Very recently, Liang et al. \cite{lyj} introduced a kind of generalized
weak Hardy spaces of Musielak-Orlicz type $W\!H^\varphi(\rn)$,
which covers both weak Hardy spaces
$W\!H^p(\rn)$ and weighted weak Hardy spaces $W\!H_w^p(\rn)$ from \cite{qy00}.
Various equivalent characterizations
of $W\!H^\varphi(\rn)$ by means of maximal functions, atoms, molecules
and Littlewood-Paley functions, and
the boundedness of Calder\'on-Zygmund operators in the critical case were obtained in \cite{lyj}.
For more related history and properties about $W\!H^p(\rn)$,
we refer the reader to \cite{at07,frs74,fs86,gh14,h13,liu91,lu95,qy00}
and their references.

On the other hand, based on the variable Lebesgue space,
several variable function spaces are developed
rapidly in recent years (see, for example, \cite{aa16,ah10,cw14,dhr09,ns12,
Xu081,yzy15,yzy151}).
Recall that the variable Lebesgue space $\lv$,
with a variable exponent function $p(\cdot):\ \rn\to(0,\fz)$,
is a generalization of the classical Lebesgue space
$L^p(\rn)$, via replacing the constant exponent $p$ by the exponent function
$p(\cdot)$, which consists of all functions $f$
such that $\int_{\rn}|f(x)|^{p(x)}\,dx<\fz$.
The study of variable Lebesgue spaces can be traced back to
Orlicz \cite{or31}, however, they
have been the subject of more intensive study since the early 1990s because
of their intrinsic interest for applications into harmonic analysis, partial
differential equations and variational integrals with nonstandard
growth conditions (see \cite{cfbook,dhr11,ins14} and their references).

Particularly, Nakai and Sawano \cite{ns12} introduced the variable
Hardy spaces $\hv$ and established their atomic
characterizations which were further applied to consider
their dual spaces and the boundedness of singular integral operators on $\hv$.
Later, in \cite{s10}, Sawano extended the atomic characterization
of $\hv$, which also improves the corresponding result in
\cite{ns12}, and gave out more applications of $\hv$, including the boundedness
of several operators on $\hv$. Moreover, Zhuo et al. \cite{zyl}
established equivalent characterizations of $\hv$ via
intrinsic square functions including the intrinsic
Lusin-area function, the intrinsic Littlewood-Paley $g$-function
or $g_\lz^*$-function.
Independently, Cruz-Uribe and Wang \cite{cw14}
also investigated the variable Hardy space $\hv$ with $p(\cdot)$
satisfying some conditions slightly weaker
than those used in \cite{ns12}. In \cite{cw14},
equivalent characterizations of $\hv$ by means of radial or non-tangential
maximal functions or atoms were established.
Very recently, in \cite{yzn15}, Yang et al.
characterized $\hv$ via Riesz transforms with $p(\cdot)$
satisfying the same conditions as in \cite{cw14}.

In this article, via combining some ideas from the theories
of the aforementioned classical weak Hardy spaces
and the variable Hardy spaces from \cite{ns12, cw14}, with the same assumptions
on $p(\cdot)$ as in Nakai and Sawano \cite{ns12}, we introduce and investigate
the variable weak Hardy spaces $\whv$. These spaces are first defined
via the radial grand maximal function
and then characterized by means of radial or non-tangential maximal
functions. Various equivalent characterizations of $\whv$ by means of atoms,
molecules and square functions, including the Lusin area function, the
Littlewood-Paley $g$-function and $g_\lz^*$-function, are also obtained.
As an application, we establish the boundedness of
convolutional $\delta$-type and non-convolutional
$\gz$-order Calder\'on-Zygmund operators from $\hv$ to $\whv$ including the critical
case when $p_-={n}/{(n+\delta)}$ or when $p_-=n/(n+\gamma)$, with $p_-$ as in \eqref{2.1x} below, which is of
special interest. These results further complete the theory
of variable Hardy-type spaces developed by Nakai and Sawano \cite{ns12} (see also
Cruz-Uribe and Wang \cite{cw14}).

To be precise, this article is organized as follows.

In Section \ref{s-pre}, we first recall some notation and notions, and state
some basic properties about variable Lebesgue spaces.
The variable weak Hardy space $\whv$ is also defined in this section
via the radial grand maximal function.

Section \ref{s-max} is devoted to characterizing the space $\whv$
by means of the radial maximal function corresponding to certain
Schwartz function or the non-tangential maximal function
corresponding to Poisson kernels (see Theorem \ref{mthm1} below).
To this end, we first establish a vector-valued inequality of
the Hardy-Littlewood maximal operator $\cm$ on the variable weak
Lebesgue space $\wlv$ in Proposition \ref{mlmveq} below.
Then, by borrowing some ideas from those used in the proofs of
\cite[p.\,91, Theorem 1]{stein93} and \cite[Theorem 2.1.4]{Gra14},
we give out the proof of Theorem \ref{mthm1}.
To prove Proposition \ref{mlmveq}, an interpolation theorem of
sublinear operators on the space $\wlv$ is obtained
(see Theorem \ref{mp1} below), which further induces the boundedness of
$\cm$ on $\wlv$ and may be of independent interest.
We point out that Proposition \ref{mlmveq} also plays an important
role in Section \ref{s-palay} when establishing
the Littlewood-Paley function characterizations of $\whv$.

In Section \ref{s-atom}, by borrowing some ideas from \cite{c77} and
a modified technic based on \cite{lyj},
we establish the atomic characterization of $\whv$.
Indeed, we first introduce the
variable weak atomic Hardy space
$\wha$ in Definition \ref{atd2} below and then
prove $\whv=\wha$ with equivalent quasi-norms (see Theorem \ref{atthm1} below).
To prove that $\wha$ is continuous embedded into $\whv$,
we mainly use a key lemma obtained by Sawano in
\cite[Lemma 4.1]{s10} (also restated as in Lemma 4.5 below), which reduces
some estimates related to $L^{p(\cdot)}(\rn)$ norms for some
series of functions into dealing with the $L^q(\rn)$ norms
of the corresponding functions,
and also the Fefferman-Stein vector-valued inequality of the Hardy-Littlewood
maximal operator $\cm$ on $\lv$
from \cite[Corollary 2.1]{cf06} (also restated as in Lemma 2.4 below).
The proof for the converse embedding is different from that used in the proof for
the corresponding embedding of variable Hardy spaces $H^{p(\cdot)}(\rn)$.
Recall that $L_\loc^1(\rn)\cap H^{p(\cdot)}(\rn)$
is dense in $H^{p(\cdot)}(\rn)$. Hence, to obtain an atomic
decomposition of any distribution $f\in H^{p(\cdot)}(\rn)$,
by a dense argument, it suffices to assume that
$f$ is a function in $L_\loc^1(\rn)$, which makes it
convenient to construct the desired atomic decomposition
(see \cite[Theorem 4.6]{ns12} and \cite[Theorem 7.1]{cw14}).
However, this standard procedure is invalid for the space $\whv$ due to
its lack of a dense function subspace.
To overcome this difficulty, we adopt a strategy used in \cite{lyj},
originated from Calder\'on \cite{c77},
to directly obtain an atomic decomposition of distributions in $\whv$
instead of some dense function subspace.

In Section \ref{s-mole}, we characterize the space $\whv$ via molecules
in Theorem \ref{mothm1} below.
Since each atom of $\whv$ is also a molecule of $\whv$,
due to Theorem \ref{atthm1}, to prove Theorem \ref{mothm1},
it suffices to show that the
variable weak molecular Hardy space
$W\!H_{\rm mol}^{p(\cdot),q,s,\epsilon}(\rn)$ is
continuously embedded into $\whv$.
To this end, the main step is to prove that a
$(p(\cdot),q,s,\epsilon)$-molecule
can be divided into an infinite linear combination of
$(p(\cdot),q,s)$-atoms. Here we use some ideas similar to those used in the proof of
\cite[Theorem 4.13]{hyy} (see also \cite{tg80}).

Section \ref{s-palay} is devoted to establishing some square function
characterizations of the space $\whv$, including characterizations via
the Lusin area function,
the Littlewood-Paley $g$-function or $g_\lz^\ast$-function,
respectively, in Theorems \ref{lpthm1}, \ref{lpthm2} and \ref{10.10.x} below.
We first prove Theorem \ref{lpthm1}, the Lusin area function
characterization of $\whv$, by borrowing some ideas from those
used in the proof of \cite[Theorem 4.5]{lyj} in which
Liang et al. established the Lusin area function characterization of
$W\!H^p(\rn)$ with $p\in(0,1]$ as a special case.
To obtain the Littlewood-Paley $g$-function characterization of $\whv$,
we make full use of an approach initiated by Ullrich \cite{u12}
and further developed by Liang et al. \cite{lsuyy},
which, via a key and technical lemma (see Lemma \ref{lm-12.7} below) and
an auxiliary function $g_{a,\ast}(f)$ (see \eqref{1.21-x} below), gives one
way to control the Littlewood-Paley $g$-function by the Lusin area function.

As an application of the space $\whv$, in Section \ref{s-bou},
we establish the boundedness of the convolutional $\delta$-type
and the non-convolutional $\gamma$-order Calder\'on-Zygmund operators
from $\hv$ to $\whv$ in the critical case when $p_-=n/{(n+\delta)}$ or when $p_-=n/(n+\gamma)$
(see Theorems \ref{bdnthm2}, respectively, \ref{bdnthm3} below).
In this case, any convolutional $\delta$-type or any non-convolution $\gamma$-order
Calder\'on-Zygmund operator may not be bounded
on $H^{p(\cdot)}(\rn)$ even when $p(\cdot)\equiv {\rm constant}\in(0,1]$.
In this sense, the space $\whv$ is a proper substitution of
$H^{p(\cdot)}(\rn)$ in the critical case for the study on the boundedness
of some operators,
which is one of the main motivations to study the variable weak Hardy space $\whv$.

We point out that the approach used in the proofs of Theorems \ref{bdnthm2} and \ref{bdnthm3}
is different from that of \cite[Theorem 1]{liu91}
(see also \cite[p.\,110, Theorem 4.2]{lu95})
in which the boundedness of the operator $T$ from the classical
Hardy space $H^{n/{(n+\delta)}}(\rn)$ to the weak
Hardy space $W\!H^{n/{(n+\delta)}}(\rn)$ was obtained,
and that of \cite[Theorem 5.2]{lyj} in which Liang et al.
established the boundedness of $T$ from the Musielak-Orlicz
Hardy space $H^{\vz}(\rn)$ to the weak Musielak-Orlicz
Hardy space $W\!H^{\vz}(\rn)$ in the critical case, where
$\vz:\ \rn\times [0,\fz)\to[0,\fz)$ is a Musielak-Orlicz growth
function satisfying that, for any given $x\in\rn$,
$\vz(x,\cdot)$ is an Orlicz function and $\vz(\cdot,t)$
is a Muckenhoupt $A_{1}(\rn)$ weight uniformly in $t\in(0,\fz)$
(see \cite[Definition 2.2]{lyj}).
Indeed, when $\vz(x,t):=t^p$ with $p\equiv {\rm constant}\in(0,\fz)$ for
all $x\in\rn$ and $t\in[0,\fz)$, or $\vz$ is as in \cite[Theorem 5.2]{lyj},
the fact that there exists a positive constant $C$ such that,
for any $\bz\in[1,\fz)$, $t\in(0,\fz)$ and any ball $B\st\rn$,
$$\int_{\bz B}\vz(x,t)\,dx\le C \bz^n \int_{ B}\vz(x,t)\,dx$$
plays a crucial role in the proofs of \cite[Theorem 5.2]{lyj} and also
\cite[p.\,110, Theorem 4.2]{lu95}. However, it may not be true
when $\vz(x,t):=t^{p(x)}$ for all $x\in\rn$ and $t\in(0,\fz)$ with
$p(\cdot)$ being a general variable exponent (see \cite[Remark 2.23]{yyz13}),
and hence the methods used in the proofs of \cite[Theorem 5.2]{lyj}
or \cite[p.\,110, Theorem 4.2]{lu95} are invalid in the present article.
To overcome this difficulty, we establish a weak-type
vector-valued inequality of the Hardy-Littlewood
maximal operator $\cm$ on $\lv$ for $p_-=1$ in
Proposition \ref{1.14.x3} below, via an extrapolation theorem
obtained in \cite[Theorem 5.24]{cfbook}.

These variable weak Hardy spaces $\whv$ might also be useful in the study
on the real interpolation between the variable Hardy spaces $\hv$, which is
the main subject of another forthcoming article, to limit the length of this article.
More applications of these variable weak Hardy spaces $\whv$ (for example, in the study
on the endpoint boundedness of operators) are expectable.

Finally, we make some conventions on notation. Let $\nn:=\{1,2,\dots\}$ and
$\zz_+:=\nn\cup\{0\}$. We denote by $C$ a \emph{positive constant}
which is independent of the main parameters,
but may vary from line to line. We use $C_{(\az,\dots)}$
to denote a positive constant depending on the indicated
parameters $\az,\, \dots$. The \emph{symbol}
$A\ls B$ means $A\le CB$. If $A\ls B$ and $B\ls A$, we then write $A\sim B$.
If $E$ is a subset of $\rn$, we denote by $\chi_E$ its
\emph{characteristic function} and by $E^\complement$
the set $\rn\backslash E$.
For all $r\in(0,\fz)$ and $x\in\rn$, denote by $B(x,r)$ the ball
centered at $x$ with the radius $r$, namely,
$B(x,r):=\{y\in\rn:\ |x-y|<r\}.$
For any ball $B$, we use $x_B$ to denote its center and $r_B$ its radius,
and denote by $\lz B$ for any $\lz\in(0,\fz)$ the ball concentric with
$B$ having the radius $\lz r_B$.

\section{Preliminaries\label{s-pre}}
\hskip\parindent
In this section, we aim to introduce the variable weak Hardy space via
the radial grand maximal function.
To this end, we first recall some notation and notions on
variable Lebesgue spaces and then state some of their basic conclusions
to be used in this article.
For an exposition of these concepts, we refer the reader to the monographs
\cite{cfbook,dhr11}.

\subsection{Variable Lebesgue spaces\label{s2.1}}
\hskip\parindent
A measurable function $p(\cdot):\ \rn\to(0,\fz)$ is called a
\emph{variable exponent}.
Denote by $\cp(\rn)$ the \emph{collection of all variable exponents}
$p(\cdot)$ satisfying
\begin{align}\label{2.1x}
0<p_-:=\mathop\mathrm{ess\,inf}_{x\in \rn}p(x)\le
\mathop\mathrm{ess\,sup}_{x\in \rn}p(x)=:p_+<\fz.
\end{align}

In what follows, for any $p(\cdot)\in\cp(\rn)$, we use $p^*(\cdot)$
to denote its \emph{conjugate variable exponent}, namely, for all $x\in\rn$,
$\frac{1}{p(x)}+\frac{1}{p^*(x)}=1$.

For a measurable function $f$ on $\rn$ and $p(\cdot)\in\cp(\rn)$,
the \emph{modular functional} (or, simply, the \emph{modular})
$\varrho_{p(\cdot)}$, associated with $p(\cdot)$, is defined by setting
$$\varrho_{p(\cdot)}(f):=\int_\rn|f(x)|^{p(x)}\,dx$$ and the
\emph{Luxemburg} (also known as the \emph{Luxemburg-Nakano})
\emph{quasi-norm} is given by setting
\begin{equation*}
\|f\|_{\lv}:=\inf\lf\{\lz\in(0,\fz):\ \varrho_{p(\cdot)}(f/\lz)\le1\r\}.
\end{equation*}
Then the \emph{variable Lebesgue space} $\lv$ is defined to be the
set of all measurable functions $f$ such that $\varrho_{p(\cdot)}(f)<\fz$,
equipped with the quasi-norm $\|f\|_{\lv}$.

\begin{rem}\label{r-vlp}
 Let $p(\cdot)\in\cp(\rn)$.

\begin{enumerate}
\item[(i)] It is easy to see that, for all $s\in (0,\fz)$ and $f\in\lv$,
$$\lf\||f|^s\r\|_{\lv}=\|f\|_{L^{sp(\cdot)}(\rn)}^s.$$
Moreover, for all $\lz\in{\mathbb C}$ and $f,\ g\in\lv$,
$\|\lz f\|_{\lv}=|\lz|\|f\|_{\lv}$ and
$$\|f+g\|_{\lv}^{\underline{p}}\le \|f\|_{\lv}^{\underline{p}}
+\|g\|_{\lv}^{\underline{p}},$$
here and hereafter,
\begin{align}\label{2.1y}
\underline{p}:=\min\{p_-,1\}
\end{align}
with $p_-$ as in \eqref{2.1x}.
Particularly, when $p_-\in[1,\fz)$,
$\lv$ is a Banach space (see \cite[Theorem 3.2.7]{dhr11}).

\item[(ii)] For any non-trivial
function $f\in \lv$, by \cite[Proposition 2.21]{cfbook},
we know that $\varrho_{p(\cdot)}(f/\|f\|_{\lv})=1$
and, if $\|f\|_{\lv}\le1$, then $\varrho_{p(\cdot)}(f)\le\|f\|_{\lv}$
(see \cite[Corollary 2.22]{cfbook}).

\item[(iii)] If there exist $\delta,\,c\in(0,\fz)$ such that  $\int_\rn[|f(x)|/\delta]^{p(x)}\,dx\le c$, then it is easy
to see that $\|f\|_{\lv}\le C\delta$, where $C$ is a positive constant
independent of $\delta$, but depending on $p_-$ (or $p_+$) and $c$.

\item[(iv)] If $p_+\in(0,1)$, then it is easy to see that,
for all non-negative functions $f,\ g\in\lv$, the following reverse
Minkowski inequality holds true:
$$\|f\|_{\lv}+\|g\|_{\lv}\le\|f+g\|_{\lv}.$$
\end{enumerate}
\end{rem}

A function $p(\cdot)\in\cp(\rn)$ is said to satisfy the
\emph{globally log-H\"older continuous condition}, denoted by $p(\cdot)\in C^{\log}(\rn)$,
if there exist positive constants $C_{\log}(p)$ and $C_\fz$, and
$p_\fz\in\rr$ such that, for all $x,\ y\in\rn$,
\begin{equation}\label{elog}
|p(x)-p(y)|\le \frac{C_{\log}(p)}{\log(e+1/|x-y|)}
\end{equation}
and
\begin{equation}\label{edecay}
|p(x)-p_\fz|\le \frac{C_\fz}{\log(e+|x|)}.
\end{equation}

For any measurable set $E\subset\rn$ and $r\in(0,\fz)$, let $L^r(E)$
be the set of all measurable functions $f$ such that
$$\|f\|_{L^r(E)}:=\lf[\int_E|f(x)|^r\,dx\r]^{1/r}<\fz.$$
For $r\in(0,\fz)$, denote by $L_{\rm loc}^r(\rn)$ the set of all
$r$-locally integrable functions
on $\rn$.
Recall that the \emph{Hardy-Littlewood maximal operator}
$\cm$ is defined by setting,
for all $f\in L_{\rm loc}^1(\rn)$ and $x\in\rn$,
\begin{align}\label{2.2x}
\cm(f)(x):=\sup_{B\ni x}\frac1{|B|}\int_B |f(y)|\,dy,
\end{align}
where the supremum is taken over all balls $B$ of $\rn$ containing $x$.

\begin{rem}\label{r-hlb}
Let $p(\cdot)\in C^{\log}(\rn)$ and $1<p_-\le p_+<\fz$.
For any $r\in[1,\fz)$, it is easy to see that $rp(\cdot)\in C^{\log}(\rn)$
and hence, for all $f\in L^{rp(\cdot)}(\rn)$,
$$\|\cm(f)\|_{L^{rp(\cdot)}(\rn)}\le C\|f\|_{L^{rp(\cdot)}(\rn)},$$
where $C$ is a positive constant independent of $f$
(see, for example, \cite[Theorem 3.16]{cfbook}).
\end{rem}

The following result is just \cite[Lemma 2.6]{zyl}
(For the case when $p_-\in(1,\fz)$, see also \cite[Corollary 3.4]{Iz10}).

\begin{lem}\label{zhuolemma}
Let $p(\cdot)\in C^{\log}(\rn)$.
Then there exists a positive constant $C$ such that, for all balls $B_1$, $B_2$
of $\rn$ with $B_1\subset B_2$,
$$C^{-1}\lf(\frac{|B_1|}{|B_2|}\r)^{\frac 1{p_-}}
\le\frac{\lf\|\chi_{B_1}\r\|_{L^{p(\cdot)}(\rn)}}
{\lf\|\chi_{B_2}\r\|_{L^{p(\cdot)}(\rn)}}
\le C\lf(\frac{|B_1|}{|B_2|}\r)^{\frac 1{p_+}}.$$
\end{lem}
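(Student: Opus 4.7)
The plan is to first establish the explicit asymptotic behavior of $\|\chi_B\|_\lv$ as a function of $|B|$, and then reduce the lemma to an elementary monotonicity argument via a case split on the sizes of $B_1$ and $B_2$.

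First I would prove the following two-sided estimates, valid for every ball $B\subset\rn$: if $|B|\le 1$ then $\|\chi_B\|_\lv\sim |B|^{1/p(x)}$ for \emph{every} $x\in B$, and if $|B|\ge 1$ then $\|\chi_B\|_\lv\sim |B|^{1/p_\fz}$, with implicit constants independent of $B$ and of $x$. For the first estimate, I would fix $x\in B$ and rewrite the modular as
\begin{equation*}
\varrho_{p(\cdot)}\lf(\chi_B/|B|^{1/p(x)}\r)=|B|^{-1}\int_B|B|^{[p(x)-p(y)]/p(x)}\,dy.
\end{equation*}
Since $|y-x|\le 2r_B$, \eqref{elog} gives $|p(y)-p(x)|\ls 1/\log(e+1/r_B)$; as the quotient $\log(1/|B|)/\log(e+1/r_B)$ is uniformly bounded for $|B|\le 1$, the integrand is comparable to $|B|^{-1}$ uniformly in $y\in B$, so the modular is $\sim 1$ and Remark \ref{r-vlp}(ii), (iii) delivers the claim. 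For the second estimate, I would test with $\lambda:=|B|^{1/p_\fz}$ and split $B$ into $B\cap B(0,1)$ and $B\setminus B(0,1)$: the first piece has bounded measure and, since $|B|\ge 1$, contributes at most a bounded constant to the modular; on the second piece, \eqref{edecay} yields $|p_\fz-p(y)|\ls 1/\log(e+|y|)$, from which a similar exponential estimate renders $|B|^{[p_\fz-p(y)]/p_\fz}$ uniformly comparable to $1$, so the modular is again $\sim 1$.

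With these equivalences in hand, I would split into three cases according to the relative sizes of $|B_1|$ and $|B_2|$. If $|B_1|\le|B_2|\le 1$, I pick any common point $x\in B_1\subset B_2$; the small-ball equivalence applied to $B_1$ and $B_2$ at the \emph{same} point $x$ yields $\|\chi_{B_1}\|_\lv/\|\chi_{B_2}\|_\lv\sim(|B_1|/|B_2|)^{1/p(x)}$, and since $|B_1|/|B_2|\le 1$ and $1/p_+\le 1/p(x)\le 1/p_-$, this ratio is sandwiched between $(|B_1|/|B_2|)^{1/p_+}$ and $(|B_1|/|B_2|)^{1/p_-}$. If $1\le|B_1|\le|B_2|$, the ratio becomes $\sim(|B_1|/|B_2|)^{1/p_\fz}$ and the same monotonicity (now applied with $p_\fz\in[p_-,p_+]$) finishes the argument. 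In the intermediate case $|B_1|\le 1\le|B_2|$ the ratio is $\sim|B_1|^{1/p(x_{B_1})}/|B_2|^{1/p_\fz}$; factoring $(|B_1|/|B_2|)^{1/p_+}$ out, the factors $|B_1|^{1/p(x_{B_1})-1/p_+}\le 1$ (by $|B_1|\le 1$ and $1/p(x_{B_1})\ge 1/p_+$) and $|B_2|^{1/p_+-1/p_\fz}\le 1$ (by $|B_2|\ge 1$ and $1/p_+\le 1/p_\fz$) give the upper estimate, and the symmetric inequalities with $p_-$ in place of $p_+$ give the lower estimate.

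The main obstacle lies in Step 1, specifically the estimate for $|B|\ge 1$, where one must simultaneously handle the unconstrained values of $p$ on the ``bounded'' part of $B$ near the origin and the decay \eqref{edecay} on the ``unbounded'' part; this is precisely where the constant $p_\fz$ enters essentially. Once both equivalences of Step 1 are secured, the remainder of the argument reduces to the three routine monotonicity verifications above.
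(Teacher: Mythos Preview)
Your overall plan---establish the asymptotics $\|\chi_B\|_{\lv}\sim|B|^{1/p(x)}$ for $|B|\le1$ and $\|\chi_B\|_{\lv}\sim|B|^{1/p_\fz}$ for $|B|\ge1$, then split into three cases on the sizes of $B_1,B_2$---is exactly the route taken in the references the paper cites for this lemma (which is not proved in the paper itself). The small-ball estimate and the final three-case monotonicity argument are correct as you wrote them.

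There is, however, a genuine gap in your large-ball step. You assert that on $B\setminus B(\vec0_n,1)$ the factor $|B|^{[p_\fz-p(y)]/p_\fz}$ is uniformly comparable to $1$ ``by a similar exponential estimate''. This is false: take $B=B(\vec0_n,R)$ with $R$ large and $y\in B$ with $|y|=2$. The decay bound \eqref{edecay} gives only $|p_\fz-p(y)|\le C_\fz/\log(e+2)$, a fixed positive number, so $|B|^{[p_\fz-p(y)]/p_\fz}$ can be as large as a fixed positive power of $|B|$. The analogy with the small-ball case breaks down because there the key ratio $\log(1/|B|)/\log(e+1/r_B)$ is uniformly bounded, whereas here $\log|B|/\log(e+|y|)$ is \emph{not} bounded over $y\in B\setminus B(\vec0_n,1)$.

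The standard repair is the integrable-error device: for any $m>n$ there exists $C=C_{(m,C_\fz,p_-)}$ such that, for all $\lambda\in(0,1]$ and $y\in\rn$,
\[
\lambda^{p(y)}\le C\lambda^{p_\fz}+(e+|y|)^{-m}
\quad\text{and}\quad
\lambda^{p_\fz}\le C\lambda^{p(y)}+(e+|y|)^{-m}.
\]
(If $\lambda\ge(e+|y|)^{-m}$ then $|p(y)-p_\fz|\log(1/\lambda)\le mC_\fz$ and the two powers are comparable; if $\lambda<(e+|y|)^{-m}$ then $\lambda^{p_-}<(e+|y|)^{-mp_-}$ and one absorbs into the error.) Applying this with $\lambda=|B|^{-1/p_\fz}$ and integrating over $B$ gives the upper bound $\varrho_{p(\cdot)}(\chi_B/|B|^{1/p_\fz})\ls 1+\int_{\rn}(e+|y|)^{-m}\,dy<\fz$. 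For the lower bound it is enough to observe that the set $\{y\in B:\ |y|\ge r_B/2\}$ always has measure $\gs|B|$ and on it $\log(e+|y|)\gs\log|B|$, so the pointwise comparability \emph{does} hold there; integrating over that subset already gives $\varrho_{p(\cdot)}(\chi_B/|B|^{1/p_\fz})\gs1$. With this correction your argument goes through.
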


The following Fefferman-Stein vector-valued inequality of
the maximal operator $\cm$ on the variable Lebesgue space $\lv$
was obtained in \cite[Corollary 2.1]{cf06}.

\begin{lem}\label{mlm1}
Let $r\in(1,\fz)$.
Assume that $p(\cdot)\in C^{\log}(\rn)$ satisfies $1<p_-\le p_+<\fz$.
Then there exists a positive
constant $C$ such that, for all
sequences $\{f_j\}_{j=1}^\fz$ of measurable functions,
$$\lf\|\lf\{\sum_{j=1}^\fz
\lf[\cm(f_j)\r]^r\r\}^{1/r}\r\|_{\lv}
\le C\lf\|\lf(\sum_{j=1}
^\fz|f_j|^r\r)^{1/r}\r\|_{\lv},$$
where $\cm$ denotes the Hardy-Littlewood maximal operator as in \eqref{2.2x}.
\end{lem}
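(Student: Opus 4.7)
The plan is to derive this vector-valued maximal inequality on $\lv$ from the classical weighted scalar theory via the Rubio de Francia extrapolation principle adapted to variable Lebesgue spaces; this substitutes for a direct decomposition argument, which would be delicate because the exponent $r$ is allowed to exceed $p_+$.

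First, I would recall the weighted vector-valued Fefferman-Stein maximal inequality of Andersen-John: for any fixed $p_0\in(1,\fz)$, any $r\in(1,\fz)$, and any Muckenhoupt weight $w\in A_{p_0}$, there exists a positive constant $C$ depending only on $n$, $p_0$, $r$ and the $A_{p_0}$ characteristic of $w$ such that, for every sequence $\{g_j\}_{j=1}^\fz$ of measurable functions,
\begin{equation*}
\bigg\|\bigg(\sum_{j=1}^\fz [\cm(g_j)]^r\bigg)^{1/r}\bigg\|_{L^{p_0}(w)}
\le C \bigg\|\bigg(\sum_{j=1}^\fz |g_j|^r\bigg)^{1/r}\bigg\|_{L^{p_0}(w)}.
\end{equation*}
Next, I would invoke the Rubio de Francia type extrapolation theorem for variable Lebesgue spaces (see, for instance, \cite{cfbook}). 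Choosing $p_0\in(1,p_-)$, which is possible because $p_->1$, the quotient $p(\cdot)/p_0$ still lies in $C^{\log}(\rn)$ with $(p(\cdot)/p_0)_->1$, and consequently its conjugate exponent $(p(\cdot)/p_0)'$ also belongs to $C^{\log}(\rn)$ and satisfies $1<((p(\cdot)/p_0)')_-\le ((p(\cdot)/p_0)')_+<\fz$. By Remark \ref{r-hlb}, the Hardy-Littlewood maximal operator $\cm$ is therefore bounded on $L^{(p(\cdot)/p_0)'}(\rn)$, which is precisely the hypothesis of the extrapolation theorem; its conclusion converts any uniform weighted $A_{p_0}$ estimate into a corresponding estimate on $\lv$.

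Applying the extrapolation theorem to the pair
\begin{equation*}
F:=\bigg(\sum_{j=1}^\fz [\cm(f_j)]^r\bigg)^{1/r}\quad\text{and}\quad
G:=\bigg(\sum_{j=1}^\fz |f_j|^r\bigg)^{1/r},
\end{equation*}
for which the uniform weighted estimate from the first step is at hand, would then yield the claimed inequality, after first truncating to finite sums $\sum_{j=1}^N$ to ensure that all quantities involved are a priori finite and then passing to the limit $N\to\fz$ via the monotone convergence theorem. The main obstacle is not the core estimate but the bookkeeping required to verify the extrapolation hypotheses: one must confirm that the log-H\"older regularity survives both the rescaling $p(\cdot)\mapsto p(\cdot)/p_0$ and the passage to the conjugate exponent, and that the reduction to finitely many summands is legitimate. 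Once these technicalities are in place, the proof reduces to combining the well-known weighted vector-valued estimate with the variable exponent extrapolation machinery.
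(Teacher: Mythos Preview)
Your argument is correct, but note that the paper does not actually prove this lemma: it is quoted verbatim as \cite[Corollary~2.1]{cf06} and stated without proof. Your extrapolation strategy---combining the Andersen--John weighted vector-valued maximal inequality with the Rubio de Francia extrapolation theorem for variable Lebesgue spaces (Lemma~\ref{1.14.x2} in the paper, with $p_0\in(1,p_-)$ and $A_{p_0}$ in place of $A_1$)---is in fact precisely the mechanism by which \cite{cf06} establishes such results, so your proposal is not a genuinely different route but rather a sketch of the original proof that the paper chose to cite. The bookkeeping you flag (log-H\"older stability under $p(\cdot)\mapsto p(\cdot)/p_0$ and under conjugation, and the monotone-convergence reduction to finite sums) is routine and handled in \cite{cfbook,cf06}.
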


\begin{rem}\label{2.5.y}
Let $p(\cdot)\in C^{\log}(\rn)$  and $\beta\in[1,\fz)$. Then, by Lemma \ref{mlm1}
and the fact that, for all balls $B\subset\rn$ and $r\in(0,\underline{p})$,
$\chi_{\beta B}\le\beta^{\frac{n}{r}}[\cm(\chi_B)]^{\frac{1}{r}}$,
we conclude that there exists a positive
constant $C$ such that, for any sequence $\{B_j\}_{j\in\nn}$ of balls of $\rn$,
$$\lf\|\sum_{j\in\nn}\chi_{\beta B_j}\r\|_{\lv}\le
C\beta^{\frac{n}{r}}\lf\|\sum_{j\in\nn}\chi_{B_j}\r\|_{\lv}.$$
\end{rem}

\subsection{Variable weak Hardy spaces $W\!H^{p(\cdot)}(\rn)$}
\hskip\parindent
In this subsection, we introduce the variable weak Hardy space
via the radial grand maximal function. To this end,
we first recall the definition of the variable weak Lebesgue space $\wlv$, which
is a special case of the variable Lorentz space
$L_{p(\cdot),q(\cdot)}(\rn)$ studied by Kempka and
Vyb\'iral in \cite{kv14}.

\begin{defn}
Let $p(\cdot)\in\cp(\rn)$.
The \emph{variable weak Lebesgue space} $W\!L^{p(\cdot)}(\rn)$
is defined to be the set of all measurable functions $f$ such that
\begin{align}\label{wvlp}
\|f\|_{WL^{p(\cdot)}(\rn)}:=\sup_{\az\in(0,\fz)}\az\lf
\|\chi_{\{x\in\rn:\ |f(x)|>\az\}}\r\|_{L^{p(\cdot)}(\rn)}<\fz.
\end{align}
\end{defn}

\begin{rem}
\begin{enumerate}
\item[(i)] We point out that the variable weak Lebesgue space
$W\!L^{p(\cdot)}(\rn)$ is a suitable substitute
of the variable Lebesgue space $\lv$ when studying the boundedness of
the Hardy-Littlewood maximal operator $\cm$ on $\lv$ when $p_-=1$.
Indeed, if $p_-=1$, then $\cm$ is not bounded on $\lv$
(see \cite[Theorem 3.19]{cfbook}).
However, if $p(\cdot)\in C^{\log}(\rn)$ with $p_-=1$, then $\cm$
is bounded from $\lv$ to $\wlv$ (see \cite[Theorem 3.16]{cfbook}).

\item[(ii)] As a special case of the variable Lorentz space
$L_{p(\cdot),q(\cdot)}(\rn)$ in \cite{kv14},
the space $\wlv$ also naturally appears when considering the
{real interpolation} between $\lv$ and $L^\fz(\rn)$.
More precisely, if $p(\cdot)\in\cp(\rn)$ and
$\theta\in(0,1)$, then it was proved by Kempka and Vyb\'iral in
\cite[Theorem 4.1]{kv14} that
$$(\lv,L^{\fz}(\rn))_{\theta,\fz}=WL^{\wz p(\cdot)}(\rn),$$
where $\frac1{\wz p(\cdot)}:=\frac{1-\theta}{p(\cdot)}$
and $(\lv,L^{\fz}(\rn))_{\theta,\fz}$ denotes the
real interpolation between $\lv$ and $L^{\fz}(\rn)$.
\end{enumerate}
\end{rem}

\begin{rem}\label{10.13.x}
In \cite{lyj}, Liang et al. introduced the \emph{weak Musielak-Orlicz space}
$W\!L^{\vz}(\rn)$, with $\vz:\ \rn\times[0,\fz)\to[0,\fz)$ being a Musielak-Orlicz
growth function (see \cite[Definition 2.2]{lyj}), which is defined as
the set of all measurable functions $f$ such that
$$\|f\|_{WL^\vz(\rn)}:=\inf\lf\{\lz\in(0,\fz):\ \sup_{\az\in(0,\fz)}
\int_{\{x\in\rn:\ |f(x)|>\az\}}\vz\lf(x,\frac{\az}{\lz}\r)\,dx\le1\r\}<\fz.$$
Here we claim that, for any $p(\cdot)\in\cp(\rn)$,
when $\vz(x,t):=t^{p(x)}$ for all $x\in\rn$ and $t\in(0,\fz)$,
$W\!L^\vz(\rn)=W\!L^{p(\cdot)}(\rn)$.

To see this, it suffices to show that, for any measurable function $f$,
\begin{align}\label{1.23x}
\|f\|_{\wlv}
&=\inf\lf\{\lz\in(0,\fz):\ \sup_{\az\in(0,\fz)}
\int_{\{x\in\rn:\ |f(x)|>\az\}}\lf(\frac{\az}{\lz}\r)^{p(x)}\,dx\le1\r\}\noz\\
&=:\|f\|_{\wlv}^\ast.
\end{align}
Indeed, for any  $\az\in(0,\fz)$, let $E_\az:=\{x\in\rn:\ |f(x)|>\az\}$.
Then, by Remark \ref{r-vlp}(ii), we know that, for any $\az\in(0,\fz)$,
\begin{align*}
\int_{E_\az}\lf[\frac{\az}{\|f\|_{\wlv}}\r]^{p(x)}\,dx
&\le\int_{E_\az}\lf[\frac{\az}{\az\lf
\|\chi_{E_\az}\r\|_{\lv}}\r]^{p(x)}\,dx\\
&=\int_{E_\az}\lf[\frac{1}{\lf
\|\chi_{E_\az}\r\|_{\lv}}\r]^{p(x)}\,dx=1,
\end{align*}
which shows that $\|f\|_{\wlv}^\ast\le \|f\|_{\wlv}$.
On the other hand, for any $\az\in(0,\fz)$, we easily find that
\begin{align*}
\alpha\|\chi_{E_\alpha}\|_{\lv}
&=\inf\lf\{\lz\in(0,\fz):\ \int_{E_\alpha}\lf(\frac{\alpha}{\lz}\r)
^{p(x)}\,dx\le1\r\}\\
&\le\inf\lf\{\lz\in(0,\fz):\ \sup_{\alpha\in(0,\fz)}\int_{E_\alpha}\lf(\frac{\alpha}{\lz}\r)
^{p(x)}\,dx\le1\r\}=\|f\|_{\wlv}^\ast,
\end{align*}
which implies that $\|f\|_{\wlv}\le\|f\|_{\wlv}^\ast$.
Therefore, \eqref{1.23x} holds true. This finishes
the proof of the above claim.
\end{rem}

Next we present some properties of the variable weak Lebesgue space $\wlv$.

\begin{lem}\label{mlm2}
Let $p(\cdot)\in \cp(\rn)$. Then $\|\cdot\|_{\wlv}$
defines a quasi-norm on $\wlv$, namely,
\begin{enumerate}
\item[{\rm (i)}] $\|f\|_{\wlv}=0$ if and only if $f=0$ almost everywhere;

\item[{\rm (ii)}] for all $\lz\in\mathbb C$ and
$f\in\wlv$, $\|\lz f\|_{\wlv}=|\lz|\|f\|_{\wlv}$;

\item[{\rm (iii)}] for any $f,\ g\in\wlv$,
$$\|f+g\|_{\wlv}^{\underline{p}}\le 2^{\underline{p}}
\lf[\|f\|_{\wlv}^{\underline{p}}+\|g\|_{\wlv}^{\underline{p}}\r],$$
where $\underline{p}$ is as in \eqref{2.1y}.
\end{enumerate}
\end{lem}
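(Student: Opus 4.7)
The plan is to verify the three quasi-norm axioms in order, using the properties of the Luxemburg norm on $\lv$ recorded in Remark \ref{r-vlp}(i).

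For (i), the ``if'' direction is immediate since $f=0$ a.e.\ gives $|\{x\in\rn:\ |f(x)|>\alpha\}|=0$ for every $\alpha\in(0,\fz)$, so the characteristic function vanishes in $\lv$. For the converse, if $\|f\|_{\wlv}=0$, then $\alpha\|\chi_{\{|f|>\alpha\}}\|_{\lv}=0$ for every $\alpha\in(0,\fz)$, hence $\|\chi_{\{|f|>\alpha\}}\|_{\lv}=0$, which forces $|\{|f|>\alpha\}|=0$ (this follows since $\varrho_{p(\cdot)}(\chi_E/\lambda)=\int_E\lambda^{-p(x)}\,dx$ blows up as $\lambda\to 0^+$ unless $|E|=0$). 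Taking the union over $\alpha=1/k$, $k\in\nn$, yields $f=0$ a.e.

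For (ii), I would write $\{x\in\rn:\ |\lambda f(x)|>\alpha\}=\{x\in\rn:\ |f(x)|>\alpha/|\lambda|\}$ for $\lambda\ne 0$, apply the definition and make the change of supremum variable $\beta:=\alpha/|\lambda|$; the case $\lambda=0$ is trivial by (i).

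For (iii), the key set-theoretic observation is
\[
\{x\in\rn:\ |f(x)+g(x)|>\alpha\}\subset\{x\in\rn:\ |f(x)|>\alpha/2\}\cup\{x\in\rn:\ |g(x)|>\alpha/2\},
\]
so that $\chi_{\{|f+g|>\alpha\}}\le\chi_{\{|f|>\alpha/2\}}+\chi_{\{|g|>\alpha/2\}}$. Applying the $\underline{p}$-quasi-triangle inequality of Remark \ref{r-vlp}(i) in $\lv$ gives
\[
\bigl\|\chi_{\{|f+g|>\alpha\}}\bigr\|_{\lv}^{\underline{p}}
\le\bigl\|\chi_{\{|f|>\alpha/2\}}\bigr\|_{\lv}^{\underline{p}}
+\bigl\|\chi_{\{|g|>\alpha/2\}}\bigr\|_{\lv}^{\underline{p}}.
\]
Multiplying by $\alpha^{\underline{p}}=2^{\underline{p}}(\alpha/2)^{\underline{p}}$ and taking the supremum over $\alpha\in(0,\fz)$ on both sides produces exactly the stated inequality, since $(\alpha/2)^{\underline{p}}\|\chi_{\{|f|>\alpha/2\}}\|_{\lv}^{\underline{p}}\le\|f\|_{\wlv}^{\underline{p}}$ and likewise for $g$.

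There is no real obstacle here; the only subtle point is to remember to raise to the power $\underline{p}$ \emph{before} applying the quasi-triangle inequality in $\lv$, so that the resulting inequality between sublinear quantities can be absorbed into a supremum, and to keep careful track of the factor $2^{\underline{p}}$ produced by splitting $\alpha$ into $\alpha/2+\alpha/2$.
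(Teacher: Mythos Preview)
Your proposal is correct and follows essentially the same argument as the paper: (i) is omitted in the paper as obvious (you give a bit more detail), (ii) is done by the identical change of supremum variable, and (iii) uses precisely the set inclusion $\{|f+g|>\alpha\}\subset\{|f|>\alpha/2\}\cup\{|g|>\alpha/2\}$ together with the $\underline{p}$-quasi-triangle inequality in $\lv$ from Remark~\ref{r-vlp}(i), then multiplies by $\alpha^{\underline{p}}$ and takes the supremum.
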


\begin{proof}
We only give the proofs of (ii) and (iii), since
(i) is obviously true, the details being omitted.

To prove (ii), without loss of generality, we may assume that
 $\lz\neq0$. By the definition of $\|\cdot\|_{\wlv}$ in \eqref{wvlp}, we have
\begin{align*}
\|\lz f\|_{\wlv}&=\sup_{\az\in(0,\fz)}\az\lf
\|\chi_{\{x\in\rn:\ |\lz f(x)|>\az\}}\r\|_{\lv}\\
&=|\lz|\sup_{\az\in (0,\fz)}\frac{\az}{|\lz|}\lf
\|\chi_{\{x\in\rn:\ |f(x)|>\frac{\az}{|\lz|}\}}\r\|_{\lv}
=|\lz|\|f\|_{\wlv}.
\end{align*}
Thus, (ii) holds true.

To show (iii), for any $f,\ g\in\wlv$, by \eqref{wvlp} and Remark \ref{r-vlp}(i),
we find that
\begin{align*}
&\|f+g\|_{\wlv}^{\underline{p}}\\
&\hs=\sup_{\az\in(0,\fz)}\az^{\underline{p}}\lf
\|\chi_{\{x\in\rn:\ |f(x)+g(x)|>\az\}}\r\|_{\lv}^{\underline{p}}\\
&\hs\le\sup_{\az\in(0,\fz)}\az^{\underline{p}}\lf[\lf
\|\chi_{\{x\in\rn:\ |f(x)|>\frac{\az}{2}\}}\r\|_{\lv}^{\underline{p}}
+\lf\|\chi_{\{x\in\rn:\ |g(x)|>\frac{\az}{2}\}}\r\|_{\lv}^{\underline{p}}\r]\\
&\hs\le\sup_{\az\in(0,\fz)}\az^{\underline{p}}\lf
\|\chi_{\{x\in\rn:\ |f(x)|>\frac{\az}{2}\}}\r\|_{\lv}^{\underline{p}}
+\sup_{\az\in(0,\fz)}\az^{\underline{p}}\lf\|
\chi_{\{x\in\rn:\ |g(x)|>\frac{\az}{2}\}}\r\|_{\lv}^{\underline{p}}\\
&\hs\le 2^{\underline{p}}\lf[\|f\|_{\wlv}^{\underline{p}}
+\|g\|_{\wlv}^{\underline{p}}\r],
\end{align*}
namely, (iii) holds true. This finishes the proof of Lemma \ref{mlm2}.
\end{proof}

\begin{rem}\label{r-ar}
Let $p(\cdot)\in\cp(\rn)$.
Then, by the Aoki-Rolewicz theorem (see \cite{ta42, sr57}
and also \cite[Exercise 1.4.6]{g09-1}),
we find that there exists a positive constant $v\in(0,1)$
such that, for all $R\in\nn$ and $\{f_j\}_{j=1}^R$,
$$\lf\|\sum_{j=1}^R |f_j|\r\|_{\wlv}^{v}
\le 4\lf[\sum_{j=1}^R \|f_j\|_{\wlv}^{v}\r].$$
\end{rem}

\begin{lem}\label{mlmim}
Let $p(\cdot)\in \cp(\rn)$.
Then, for all $f\in\wlv$ and $s\in(0,\fz)$, it holds true that
$$\lf\||f|^s\r\|_{\wlv}=\|f\|_{WL^{sp(\cdot)}(\rn)}^s.$$
\end{lem}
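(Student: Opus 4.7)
The plan is to prove the identity by a direct computation, essentially changing variables inside the defining supremum and invoking the scaling identity for $L^{p(\cdot)}(\rn)$ that is recorded in Remark \ref{r-vlp}(i).

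First I would write out the definition of the left-hand side:
\[
\lf\||f|^{s}\r\|_{WL^{p(\cdot)}(\rn)}
=\sup_{\az\in(0,\fz)}\az\,\lf\|\chi_{\{x\in\rn:\ |f(x)|^{s}>\az\}}\r\|_{L^{p(\cdot)}(\rn)}.
\]
Then I would substitute $\az=\bz^{s}$ with $\bz\in(0,\fz)$; since $t\mapsto t^{s}$ is a strictly increasing bijection on $(0,\fz)$, the substitution preserves the supremum, and one has
$$\{x\in\rn:\ |f(x)|^{s}>\bz^{s}\}=\{x\in\rn:\ |f(x)|>\bz\}.$$

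The key algebraic step is the observation that for any measurable set $E\subset\rn$ the characteristic function $\chi_{E}$ satisfies $|\chi_{E}|^{s}=\chi_{E}$, so the identity $\||g|^{s}\|_{L^{p(\cdot)}(\rn)}=\|g\|_{L^{sp(\cdot)}(\rn)}^{s}$ from Remark \ref{r-vlp}(i), applied to $g:=\chi_{E}$, yields
\[
\lf\|\chi_{E}\r\|_{L^{p(\cdot)}(\rn)}=\lf\|\chi_{E}\r\|_{L^{sp(\cdot)}(\rn)}^{s}.
\]
Applying this with $E:=\{x\in\rn:\ |f(x)|>\bz\}$ and combining with the previous display gives
\[
\lf\||f|^{s}\r\|_{WL^{p(\cdot)}(\rn)}
=\sup_{\bz\in(0,\fz)}\bz^{s}\,\lf\|\chi_{\{|f|>\bz\}}\r\|_{L^{sp(\cdot)}(\rn)}^{s}
=\lf[\sup_{\bz\in(0,\fz)}\bz\,\lf\|\chi_{\{|f|>\bz\}}\r\|_{L^{sp(\cdot)}(\rn)}\r]^{s},
\]
where in the last step I pull the $s$-th power outside the supremum, again using the monotonicity of $t\mapsto t^{s}$ on $[0,\fz)$. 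The quantity inside the brackets is precisely $\|f\|_{WL^{sp(\cdot)}(\rn)}$, which finishes the proof.

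There is essentially no serious obstacle here: the argument is three lines of bookkeeping, and the only subtle point is remembering that on indicator functions the scaling identity from Remark \ref{r-vlp}(i) takes the form $\|\chi_{E}\|_{L^{p(\cdot)}(\rn)}=\|\chi_{E}\|_{L^{sp(\cdot)}(\rn)}^{s}$ rather than a version with $p(\cdot)/s$. Once that is noted, both inequalities $\le$ and $\ge$ drop out simultaneously from the bijectivity of $\bz\mapsto\bz^{s}$.
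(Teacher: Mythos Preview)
Your proof is correct and follows essentially the same approach as the paper's own proof: both substitute $\az=\bz^{s}$ in the defining supremum and then invoke the scaling identity from Remark~\ref{r-vlp}(i) on the characteristic function to pass from $\|\chi_{E}\|_{L^{p(\cdot)}}$ to $\|\chi_{E}\|_{L^{sp(\cdot)}}^{s}$. The paper's version is just a little more compressed, but the content is identical.
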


\begin{proof}
By \eqref{wvlp} and Remark \ref{r-vlp}(i), we find that
\begin{align*}
\lf\||f|^s\r\|_{\wlv}&=\sup_{\az\in(0,\fz)}\az\lf\|
\chi_{\{x\in\rn:\ |f(x)|^s>\az\}}\r\|_{\lv}\\
&=\sup_{\bz\in(0,\fz)}\bz^s\lf\|
\chi_{\{x\in\rn:\ |f(x)|>\bz\}}\r\|_{\lv}\\
&=\sup_{\bz\in(0,\fz)}\bz^s\lf\|
\chi_{\{x\in\rn:\ |f(x)|>\bz\}}\r\|_{L^{sp(\cdot)}(\rn)}^s
=\|f\|_{WL^{sp(\cdot)}(\rn)}^s.
\end{align*}
This finishes the proof of Lemma \ref{mlmim}.
\end{proof}

From the Fatou lemma of $L^{p(\cdot)}(\rn)$ (see \cite[Theorem 2.61]{cfbook}),
we easily deduce the following Fatou lemma of $\wlv$, the details being omitted.

\begin{lem}\label{10.24.x1}
Let $p(\cdot)\in\cp(\rn)$ and $\{f_k\}_{k\in\nn}\subset\wlv$.
If $f_k\to f$ as $k\to\fz$ pointwise almost everywhere in $\rn$ and
$\liminf_{k\to\fz}\|f_k\|_{\wlv}$ is finite,
then $f\in\wlv$ and
$$\|f\|_{\wlv}\le\liminf_{k\to\fz}\|f_k\|_{\wlv}.$$
\end{lem}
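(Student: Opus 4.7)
The plan is to deduce the weak Fatou inequality from the Fatou lemma for the strong space $L^{p(\cdot)}(\rn)$ (i.e., \cite[Theorem 2.61]{cfbook}, as already invoked in the excerpt) applied level by level, with a small shift in the threshold to accommodate pointwise convergence. First, I would fix $\az\in(0,\fz)$ together with an auxiliary parameter $\epsilon\in(0,\az)$. Since $f_k\to f$ almost everywhere, at each $x$ with $|f(x)|>\az$ one has $|f_k(x)|>\az-\epsilon$ for all sufficiently large $k$, so
$$\chi_{\{x\in\rn:\ |f(x)|>\az\}}\le\liminf_{k\to\fz}\chi_{\{x\in\rn:\ |f_k(x)|>\az-\epsilon\}}\qquad\text{a.e. on }\rn.$$

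Next, I would apply the Fatou lemma of $L^{p(\cdot)}(\rn)$ to the nonnegative sequence $\{\chi_{\{x\in\rn:\ |f_k(x)|>\az-\epsilon\}}\}_{k\in\nn}$, which yields
$$\lf\|\chi_{\{x\in\rn:\ |f(x)|>\az\}}\r\|_{\lv}\le\liminf_{k\to\fz}\lf\|\chi_{\{x\in\rn:\ |f_k(x)|>\az-\epsilon\}}\r\|_{\lv}.$$
Multiplying by $\az$, writing $\az=\frac{\az}{\az-\epsilon}(\az-\epsilon)$, and invoking the definition \eqref{wvlp} of $\|\cdot\|_{\wlv}$ produces
$$\az\lf\|\chi_{\{x\in\rn:\ |f(x)|>\az\}}\r\|_{\lv}\le\frac{\az}{\az-\epsilon}\liminf_{k\to\fz}\|f_k\|_{\wlv}.$$

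I would then let $\epsilon\to 0^+$ to kill the prefactor $\frac{\az}{\az-\epsilon}\to1$, and finally take the supremum over $\az\in(0,\fz)$. This delivers the claimed estimate
$\|f\|_{\wlv}\le\liminf_{k\to\fz}\|f_k\|_{\wlv}$,
and, since by hypothesis the right-hand side is finite, also $f\in\wlv$.

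The only subtle point I anticipate is the need for the $\epsilon$-shift in the threshold: pointwise convergence $f_k\to f$ does not imply $\chi_{\{|f_k|>\az\}}\to\chi_{\{|f|>\az\}}$ on the boundary set $\{|f|=\az\}$, so Fatou cannot be invoked directly at the level $\az$; passing to $\az-\epsilon$ restores the required pointwise inequality a.e., and the distortion is erased in the limit. Everything else amounts to unwinding the definition of the weak quasi-norm, as anticipated in the excerpt's remark that the details are omitted.
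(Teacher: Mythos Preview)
Your proof is correct and follows exactly the route the paper indicates (where the details are omitted): apply the Fatou lemma for $L^{p(\cdot)}(\rn)$ to the characteristic functions of the superlevel sets and then optimize over the level. Note, however, that the $\epsilon$-shift you flag as ``the only subtle point'' is unnecessary: because the superlevel set uses a \emph{strict} inequality, $|f(x)|>\az$ together with $f_k(x)\to f(x)$ already forces $|f_k(x)|>\az$ for all large $k$, so $\chi_{\{|f|>\az\}}\le\liminf_{k\to\fz}\chi_{\{|f_k|>\az\}}$ holds a.e.\ without any perturbation of the threshold.
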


In what follows, denote by $\cs(\rn)$ the \emph{space of all
Schwartz functions} and $\cs'(\rn)$ its \emph{topological dual space}
equipped with the weak-$*$ topology.
For any $N\in\nn$, let
\begin{align}\label{2.4x}
\cf_N(\rn):=\lf\{\psi\in\cs(\rn):\ \sum_{\bz\in\zz_+^n,\,|\bz|\le N}
\sup_{x\in\rn}\lf[(1+|x|)^N|D^\bz\psi(x)|\r]\le1\r\},
\end{align}
where, for any $\bz:=(\bz_1,\dots,\bz_n)\in\zz_+^n$,
$|\bz|:=\bz_1+\cdots+\bz_n$ and
$D^\bz:=(\frac\partial{\partial x_1})^{\bz_1}
\cdots(\frac\partial{\partial x_n})^{\bz_n}$.
Then, for all $f\in\cs'(\rn)$, the \emph{radial grand maximal function}
$f^\ast_{N,+}$
of $f$ is defined by setting, for all $x\in\rn$,
\begin{equation}\label{2.8x}
f_{N,+}^\ast(x):=\sup\lf\{|f\ast\psi_t(x)|:\
t\in(0,\fz)\ {\rm and}\ \psi\in\cf_N(\rn)\r\},
\end{equation}
where, for all $t\in(0,\fz)$ and $\xi\in\rn$,
$\psi_t(\xi):=t^{-n}\psi(\xi/t)$.

Now we introduce the variable weak Hardy space.

\begin{defn}
Let $p(\cdot)\in C^{\log}(\rn)$ and
$N\in(\frac{n}{\underline{p}}+n+1,\fz)$ be a positive integer,
where $\underline{p}$ is as in \eqref{2.1y}.
The \emph{variable weak Hardy space} $\whv$ is defined to be the set of all
$f\in\cs'(\rn)$ such that
$f_{N,+}^\ast\in\wlv$, equipped with the {quasi-norm}
$$\|f\|_{\whv}:=\|f_{N,+}^\ast\|_{\wlv}.$$
\end{defn}

\begin{rem}\label{1.29.x2}
\begin{enumerate}
\item[(i)] If $p(\cdot)\equiv p\in(0,\fz)$,
then the space $\whv$ is just the classical weak Hardy space
$W\!H^p(\rn)$ studied in \cite{frs74,fs86,liu91}. By Theorem \ref{mthm1}
below, we find that the space $\whv$ is independent of the choice of
$N\in(\frac{n}{\underline{p}}+n+1,\fz)$.

\item[(ii)] Very recently, Almeida et al. \cite{abr16} introduced the
variable anisotropic Hardy-Lorentz spaces on
$\rn$ associated with an expansive matrix $A$,
$H^{p(\cdot),q(\cdot)}(\rn, A)$, via the variable Lorentz spaces
$\cl^{p(\cdot),q(\cdot)}(\rn)$ in \cite{eks08}, where
$$p(\cdot),\ q(\cdot):\ (0,\fz)\to(0,\fz)$$
are bounded measurable functions. As was pointed out in
\cite[Remark 2.6]{kv14}, the space $\cl^{p(\cdot),q(\cdot)}(\rn)$
in \cite{eks08} never goes back to the space $\lv$,
since the variable exponent $p(\cdot)$ in $\cl^{p(\cdot),q(\cdot)}(\rn)$
is only defined on $(0,\fz)$ but not on $\rn$.

On the other hand, the space $\whv$ introduced in this article
is defined via the variable Lorentz space
$L_{p(\cdot),q(\cdot)}(\rn)$ from \cite{kv14} but with $q(\cdot)\equiv\fz$,
which is not covered by the space $H^{p(\cdot),q(\cdot)}(\rn, A)$
in \cite{abr16}.
Moreover, as was mentioned in \cite[p.\,5]{abr16},
the main procedure of \cite{abr16} requires the fact that
the set $L_\loc^1(\rn)\cap H^{p(\cdot),q(\cdot)}(\rn, A)$
is dense in $H^{p(\cdot),q(\cdot)}(\rn, A)$.
Thus, the method used in \cite{abr16} does not work for
$\whv$ in the present article,
due to the lack of a dense function subspace of $\whv$ even when
$p(\cdot)\equiv {\rm constant}\in(0,\fz)$.

\item[(iii)] Recall that Liang et al. \cite{lyj} introduced the weak
Musielak-Orlicz Hardy space $W\!H^\varphi(\rn)$ with a
Musielak-Orlicz function
$\varphi:\ \rn\times[0,\fz)\rightarrow[0,\fz)$. Observe that, when
\begin{equation}\label{liang1}
\vz(x,t):=t^{p(x)}\quad\mathrm{for\ all}\quad x\in\rn\quad\mathrm{and}\quad
t\in (0,\fz),
\end{equation}
then $W\!H^\varphi(\rn)=\whv$ (see also Remark \ref{10.13.x}).
However, a general Musielak-Orlicz
function $\varphi$ satisfying all the assumptions
in \cite{lyj} may not have the form as in \eqref{liang1}.
On the other hand, it was proved in \cite{yyz13} that
there exists a variable exponent function $p(\cdot)$
satisfying \eqref{elog} and \eqref{edecay} which were required in this article,
but $t^{p(\cdot)}$ is not a uniformly Muckenhoupt
weight which was required in \cite{lyj}. Thus, the weak Musielak-Orlicz
Hardy space $W\!H^\varphi(\rn)$ in \cite{lyj} and the variable weak Hardy space $\whv$ in this article can not cover each other.
\end{enumerate}
\end{rem}

\section{Maximal function characterizations of $\whv$\label{s-max}}

\hskip\parindent
In this section, we aim to characterize $W\!H^{p(\cdot)}(\rn)$
via radial or non-tangential maximal functions.
To this end, we first establish an interpolation theorem on $\wlv$ in
Subsection \ref{s3.1}. Via applying such an interpolation theorem,
the maximal function characterizations of $\whv$ are
obtained in Subsection \ref{s3.2}.

\subsection{An interpolation theorem on $\wlv$\label{s3.1}}

\hskip\parindent
In what follows, let $\mathscr{M}_0(\rn)$ be the linear space of
all almost everywhere finite measurable functions on $\rn$.
Let $T$ be an operator defined on $\mathscr{M}_0(\rn)$. Then $T$ is
called a \emph{sublinear operator} if,
for all $f,\ g\in \mathscr{M}_0(\rn)$ and all $\lz\in\cc$,
$$|T(f+g)|\le |T(f)|+|T(g)|\quad
{\rm and}\quad |T(\lz f)|=|\lz||T(f)|.$$
For $q_1(\cdot),\ q_2(\cdot)\in\cp(\rn)$, let
$$L^{q_1(\cdot)}(\rn)+L^{q_2(\cdot)}(\rn)
:=\lf\{f\in \mathscr{M}_0(\rn):
\ f=f_1+f_2,\ f_k\in L^{q_k(\cdot)}(\rn),\ k\in\{1,2\}\r\}.$$

The main result of this section is stated as follows.

\begin{thm}\label{mp1}
Let $p(\cdot)\in \cp(\rn)$ with $1<p_-\le p_+<\fz$,
$p_1\in(\frac{1}{p_-},1)$ and $p_2\in(1,\fz)$,
where $p_-$ and $p_+$ are as in \eqref{2.1x}.
Assume that $T$ is a sublinear operator defined on
$L^{p_1p(\cdot)}(\rn)+L^{p_2p(\cdot)}(\rn)$ satisfying that
there exist positive constants $C_1$ and $C_2$ such that,
for all $i\in\{1,2\}$, $f\in L^{p_ip(\cdot)}(\rn)$ and $\beta\in(0,\fz)$,
\begin{equation}\label{interp-1}
\beta\lf\|\chi_{\{x\in\rn:\ |T(f)(x)|>\beta\}}\r\|_{L^{p_ip(\cdot)}(\rn)}
\le C_i\|f\|_{L^{p_ip(\cdot)}(\rn)}.
\end{equation}
Then $T$ is bounded on $\wlv$ and there exists a positive constant
$C$ such that, for all $f\in \wlv$,
\begin{equation*}
\|T(f)\|_{\wlv}\le C\|f\|_{\wlv}.
\end{equation*}
\end{thm}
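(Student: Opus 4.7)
The plan is to run a Marcinkiewicz-style level-set decomposition, exploiting that the hypothesis $p_->1$ makes $\lv$ a Banach space. Fix $f\in\wlv$ with $M:=\|f\|_{\wlv}$, fix $\beta\in(0,\fz)$, and split
$$f=f^\beta+f_\beta,\qquad f^\beta:=f\chi_{\{x\in\rn:\ |f(x)|>\beta\}},\quad f_\beta:=f-f^\beta.$$
By sublinearity, $\{|Tf|>\beta\}\st\{|Tf^\beta|>\beta/2\}\cup\{|Tf_\beta|>\beta/2\}$. Combining the triangle inequality in $\lv$ with the scaling identity from Remark \ref{r-vlp}(i) applied to indicators --- which, since $\chi_E^{p_i}=\chi_E$, yields $\|\chi_E\|_{\lv}=\|\chi_E\|_{L^{p_ip(\cdot)}(\rn)}^{p_i}$ --- and raising \eqref{interp-1} to the $p_i$-th power, the problem reduces to proving, uniformly in $\beta$,
$$\beta^{1-p_1}\|f^\beta\|_{L^{p_1p(\cdot)}(\rn)}^{p_1}\ls M\quad\text{and}\quad\beta^{1-p_2}\|f_\beta\|_{L^{p_2p(\cdot)}(\rn)}^{p_2}\ls M.$$

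For the large-value bound I would decompose dyadically $f^\beta=\sum_{k=0}^\fz f\chi_{\{2^k\beta<|f|\le 2^{k+1}\beta\}}$. Since $p_1>1/p_-$, the exponent $p_1p(\cdot)$ satisfies $(p_1p(\cdot))_-=p_1p_->1$, so $L^{p_1p(\cdot)}(\rn)$ is a Banach space and the triangle inequality applies; combining it with the pointwise bound $|f|\le 2^{k+1}\beta$ on the $k$-th piece, the identity $\|\chi_E\|_{L^{p_1p(\cdot)}(\rn)}=\|\chi_E\|_{\lv}^{1/p_1}$, and the defining inequality $\alpha\|\chi_{\{|f|>\alpha\}}\|_{\lv}\le M$ of $\wlv$ gives
$$\|f^\beta\|_{L^{p_1p(\cdot)}(\rn)}\ls M^{1/p_1}\beta^{1-1/p_1}\sum_{k=0}^\fz 2^{k(1-1/p_1)},$$
where the geometric series converges precisely because $p_1<1$. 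Raising this to the $p_1$-th power and multiplying by $\beta^{1-p_1}$ yields the first endpoint bound. The second is obtained by the parallel dyadic decomposition $f_\beta=\sum_{k=0}^\fz f\chi_{\{2^{-k-1}\beta<|f|\le 2^{-k}\beta\}}$: here $L^{p_2p(\cdot)}(\rn)$ is Banach because $p_2>1$ (and $p_->1$), and the companion series $\sum_{k=0}^\fz 2^{k(1/p_2-1)}$ converges for the same reason.

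Combining the two endpoint bounds with the reduction from the first paragraph produces $\beta\|\chi_{\{|Tf|>\beta\}}\|_{\lv}\ls M$ uniformly in $\beta$, which, on taking the supremum, is exactly $\|Tf\|_{\wlv}\ls\|f\|_{\wlv}$. The main obstacle is the pair of endpoint bounds, where the quantitative hypotheses are tight: $p_1>1/p_-$ (respectively $p_2>1$) supplies the Banach triangle inequality needed for the dyadic summation, while $p_1<1$ (respectively $p_2>1$) is precisely what makes the relevant geometric series converge in the large-value (respectively small-value) tail. Once both endpoint bounds are in place, the rest of the argument is standard Marcinkiewicz bookkeeping.
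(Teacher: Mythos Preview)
Your proof is correct, and the overall architecture --- split $f$ at height $\beta$, use the identity $\|\chi_E\|_{\lv}=\|\chi_E\|_{L^{p_ip(\cdot)}(\rn)}^{p_i}$ to reduce to the two endpoint norm estimates
\[
\beta^{1-p_i}\|f_{\beta,i}\|_{L^{p_ip(\cdot)}(\rn)}^{p_i}\ls\|f\|_{\wlv},
\]
and then invoke \eqref{interp-1} --- is exactly what the paper does. The difference lies in how these endpoint norms are estimated.

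The paper works at the \emph{modular} level: it invokes the elementary identity $t^{p(x)}\sim\int_0^t r^{p(x)}\,\frac{dr}{r}$ (their Lemma~\ref{r-2.1x}) to rewrite $\int_\rn[|f_{\beta,i}|/\mu]^{p_ip(x)}\,dx$ as a layer-cake integral in an auxiliary variable, applies Fubini, and then splits that auxiliary integration at the threshold $\beta/\|f\|_{\wlv}$. Each piece is then controlled by the defining weak-$L^{p(\cdot)}$ inequality directly on the modular, and the norm bound follows from Remark~\ref{r-vlp}(iii). No triangle inequality in $L^{p_ip(\cdot)}(\rn)$ is used.

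Your route works at the \emph{norm} level: a dyadic decomposition followed by the triangle inequality in the Banach space $L^{p_ip(\cdot)}(\rn)$, available precisely because $p_ip_->1$. This avoids the auxiliary lemma entirely and is perhaps more transparent; the paper's method, in turn, is slightly more robust in that it never needs completeness or countable subadditivity of the norm, only the modular-to-norm comparison of Remark~\ref{r-vlp}(iii). Both exploit the same numerology ($p_1<1$ and $p_2>1$ for the geometric sums, $p_1>1/p_-$ to stay in the Banach range) and lead to the same conclusion.
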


To prove Theorem \ref{mp1}, we need the following lemma, whose proof is quite easy,
the details being omitted.

\begin{lem}\label{r-2.1x}
Let $p(\cdot)\in \cp(\rn)$.
Then, for any $t\in(0,\fz)$ and $x\in\rn$, it holds true that
$$t^{p(x)}\sim\int_0^tr^{p(x)}\frac{dr}{r},$$
where the implicit equivalent positive
constants are independent of $t$ and $x$.
\end{lem}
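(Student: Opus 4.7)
The proof plan is essentially a one-line calculation, so I will describe it briefly. First I would compute the integral explicitly: since $p(x)\in(0,\fz)$ for every $x\in\rn$, the function $r\mapsto r^{p(x)-1}$ is locally integrable on $(0,t]$, and a direct evaluation gives
$$
\int_0^t r^{p(x)}\,\frac{dr}{r}=\int_0^t r^{p(x)-1}\,dr=\frac{t^{p(x)}}{p(x)}.
$$

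Then I would invoke the definition of $\cp(\rn)$ in \eqref{2.1x}, which provides constants $0<p_-\le p_+<\fz$ such that $p_-\le p(x)\le p_+$ for almost every $x\in\rn$. Consequently,
$$
\frac{1}{p_+}\,t^{p(x)}\le \int_0^t r^{p(x)}\,\frac{dr}{r}\le \frac{1}{p_-}\,t^{p(x)},
$$
which yields the desired equivalence $t^{p(x)}\sim\int_0^t r^{p(x)}\,\frac{dr}{r}$ with implicit positive constants depending only on $p_-$ and $p_+$, and in particular independent of $t$ and $x$.

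There is no real obstacle here; the only subtlety is making explicit that the condition $p_->0$ (rather than merely $p(x)>0$ pointwise) is what forces the equivalence constants to be uniform in $x$, since otherwise the factor $1/p(x)$ could fail to be bounded above. Since this uniformity is built into the definition of $\cp(\rn)$ via \eqref{2.1x}, the conclusion follows immediately.
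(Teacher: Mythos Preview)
Your proof is correct and is exactly the direct computation the paper has in mind; indeed, the paper omits the proof entirely, stating only that it ``is quite easy, the details being omitted.'' Your observation that the uniform bound $p_->0$ from \eqref{2.1x} is what makes the constants independent of $x$ is the only point worth noting, and you have made it.
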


\begin{proof}[Proof of Theorem \ref{mp1}]
Let $f\in \wlv$ and
$$\lz:=\|f\|_{\wlv}=\sup_{\bz\in(0,\fz)}\bz
\lf\|\chi_{\{x\in\rn:\ |f(x)|>\bz\}}\r\|_{\lv}.$$
Then, by Remark \ref{r-vlp}(ii), we easily know that, for all $\bz\in(0,\fz)$,
\begin{align}\label{3.1x}
\int_{\{x\in\rn:\ |f(x)|>\bz\}}\lf(\frac{\bz}{\lz}\r)^{p(x)}\,dx\le1.
\end{align}
Next we show that, for all $\az\in(0,\fz)$,
\begin{equation*}
\az\lf\|\chi_{\{x\in\rn:\ \lf|T(f)(x)\r|>\az\}}\r\|_{\lv}\ls\lz
\end{equation*}
with the implicit positive constant independent of $\az$ and $f$.

To this end, for any $\az\in(0,\fz)$, let
$$f_{\az,1}:=f\chi_{\{x\in\rn:\ |f(x)|>\az\}}\ \ \ \mbox{and}\ \ \ f_{\az,2}
:=f\chi_{\{x\in\rn:\ |f(x)|\le\az\}}.$$
We claim that, for $i\in\{1,2\}$,
\begin{align}\label{emb}
\int_{\rn}\lf[\frac{\lf|
f_{\az,i}(x)\r|/{\az}}{(\lz/{\az})
^{1/{p_i}}}\r]^{p_ip(x)}\,dx\ls 1.
\end{align}
Assuming that this claim holds true for the time being, then,
by Remark \ref{r-vlp}(iii), we find that,
for $i\in\{1,2\}$,
$$\lf\|f_{\az,i}/{\az}\r\|_{L^{p_ip(\cdot)}(\rn)}
\ls (\lz/{\az})^{1/{p_i}},$$
which shows that $f_{\az,i}\in L^{p_ip(\cdot)}(\rn)$
and ${\az}^{1-p_i}\lf\|f_{\az,i}\r\|_
{L^{p_ip(\cdot)}(\rn)}^{p_i}\ls\lz<\fz$.
From this and the fact that $T$ is sublinear, Remark \ref{r-vlp}(i)
and \eqref{interp-1}, we deduce that, for any $\az\in(0,\fz)$,
\begin{align*}
&\az\lf\|\chi_{\{x\in\rn:\ |T(f)(x)|>\az\}}\r\|_{\lv}\\
&\hs\ls\az\lf\|\chi_{\{x\in\rn:\
|T(f_{\az,1})(x)|>\az/2\}}\r\|_{\lv}+
\az\lf\|\chi_{\{x\in\rn:\
|T(f_{\az,2})(x)|>\az/2\}}\r\|_{\lv}\\
&\hs\sim\sum_{i=1}^2\az\lf\|\chi_{\{x\in\rn:\
|T(f_{\az,i})(x)|>\az/2\}}\r\|_{L^{p_ip(\cdot)}(\rn)}^{p_i}
\ls\sum_{i=1}^2{\az}^{1-p_i}\lf\|f_{\az,i}\r\|
_{L^{p_ip(\cdot)}(\rn)}^{p_i}\ls\lz.
\end{align*}
This further implies that $\|T(f)\|_{\wlv}\ls\|f\|_{\wlv}$,
which is the desired conclusion.

Therefore, to complete the proof of Theorem \ref{mp1},
it remains to prove the above claim.

To this end, when $i=1$,
by Lemma \ref{r-2.1x} and $p(x)\ge p_->1$ for almost every $x\in\rn$, we have
\begin{align}\label{emd}
&\int_{\rn}\lf[\frac{\lf|f_{\az,1}(x)\r|/{\az}}
{(\lz/{\az})^{1/{p_1}}}\r]^{p_1p(x)}\,dx\noz\\
&\hs=\int_{\rn}\frac{1}{p(x)}\int_0^\frac{[|f_{\az,1}(x)|/
{\az}]^{p_1}}{\lz/{\az}}t^{p(x)}\frac{dt}{t}\,dx
\le \frac{1}{p_-}\int_0^{\fz}\int
_{\{x\in\rn:\ [|f_{\az,1}(x)|/{\az}]
^{p_1}>\frac{t\lz}{\az}\}}
t^{p(x)}\frac{dx\,dt}{t}\noz\\
&\hs\le\int_0^{\az/\lz}
\int_{\{x\in\rn:\ [|f_{\az,1}(x)|
/{\az}]^{p_1}>\frac{t\lz}
{\az}\}}t^{p(x)}\frac{dx\,dt}{t}
+\int_{\az/\lz}^{\fz}\int_{\{x\in\rn:\
[|f_{\az,1}(x)|/{\az}]
^{p_1}>\frac{t\lz}{\az}\}}\cdots\noz\\
&\hs=:{\rm I}_1+{\rm I}_2.
\end{align}
By the definition of $f_{\az,1}$ and \eqref{3.1x}, we conclude that
\begin{align}\label{emd1}
{\rm I}_1&\le\int_0^{\az/\lz}
\int_{\{x\in\rn:\ |f(x)|>\az\}}\lf(\frac{\az}
{\lz}\r)^{p(x)}\lf(\frac{t\lz}{\az}\r)
^{p_-}\frac{dx\,dt}{t}\noz\\
&=\int_0^{\az/\lz}\lf(\frac{t\lz}{\az}\r)
^{p_-}\frac{dt}{t}
\int_{\{x\in\rn:\ |f(x)|>\az\}}\lf(\frac{\az}
{\lz}\r)^{p(x)}\,dx\noz\\
&\le\int_0^{\az/\lz}\lf(\frac{t\lz}{\az}\r)
^{p_-}\,\frac{dt}{t}\sim 1.
\end{align}
On the other hand, from the fact that $p_1\in(\frac{1}{p_-},1)$,
the definition of $f_{\az,1}$ and \eqref{3.1x} again, we deduce that
\begin{align}\label{emd2}
\hs{\rm I}_2&\le\int_{\az/\lz}^
{\fz}\int_{\{x\in\rn:\ |f_{\az,1}(x)|
>\az\lf(\frac{t\lz}{\az}\r)^{1/{p_1}}\}}
\lf[\frac{\az}{\lz}\lf
(\frac{t\lz}{\az}\r)^{1/{p_1}}\r]^{p(x)}
\lf(\frac{t\lz}{\az}\r)^
{(1-1/p_1)p_-}\,dx\frac{dt}{t}\noz\\
&\le\int_{\az/\lz}^{\fz}
\lf(\int_{\{x\in\rn:\ |f(x)|>\az\lf(\frac{t\lz}{\az}\r)^{1/{p_1}}\}}
\lf[\frac{\az}{\lz}\lf
(\frac{t\lz}{\az}\r)^{1/{p_1}}\r]^{p(x)}\,dx\r)
\lf(\frac{t\lz}{\az}\r)^
{(1-1/p_1)p_-}\frac{dt}{t}\noz\\
&\le\int_{\az/\lz}^
{\fz}\lf(\frac{t\lz}{\az}\r)^
{(1-1/p_1)p_-}\,\frac{dt}{t}
\sim 1.
\end{align}
Thus, by \eqref{emd}, \eqref{emd1}
and \eqref{emd2}, we obtain \eqref{emb} when $i=1$.

When $i=2$, by Lemma \ref{r-2.1x} and $p(x)\ge p_->1$ for almost every $x\in\rn$, we know that
\begin{align}\label{eme}
&\int_{\rn}\lf[\frac{|f_{\az,2}(x)|/{\az}}
{(\lz/{\az})^{1/{p_2}}}\r]^{p_2p(x)}\,dx\noz\\
&\hs=\int_{\rn}\frac1{p(x)}\int_0^
\frac{[|f_{\az,2}(x)|
/{\az}]^{p_2}}
{\lz/{\az}}t^{p(x)}\,\frac{dt}{t}\,dx
\le \int_0^{\fz}\int_
{\{x\in\rn:\ [|f_{\az,2}(x)|
/{\az}]^{p_2}>\frac{t\lz}{\az}\}}
t^{p(x)}\,dx\,\frac{dt}{t}\noz\\
&\hs=\int_0^{\az/\lz}\int_{\{x\in\rn:\
[|f_{\az,2}(x)|/{\az}]
^{p_2}>\frac{t\lz}{\az}\}}t^{p(x)}\,dx\,\frac{dt}{t}
+\int_{\az/\lz}^{\fz}\int_{\{x\in\rn:\
[|f_{\az,2}(x)|/{\az}]
^{p_2}>\frac{t\lz}{\az}\}}\cdots\noz\\
&\hs=:{\rm II}_1+{\rm II}_2.
\end{align}
By the definition of $f_{\az,2}$, \eqref{3.1x} and the fact that
$p_2\in(1,\fz)$, we find that
\begin{align}\label{eme1}
\hs\hs{\rm II}_1&\le\int_0^{\az/\lz}
\int_{\{x\in\rn:\ |f_{\az,2}(x)|
>\az\lf(\frac{t\lz}{\az}\r)^{1/{p_2}}\}}
\lf[\frac{\az}{\lz}\lf(\frac{t\lz}{\az}\r)^{1/{p_2}}\r]
^{p(x)}\lf(\frac{t\lz}{\az}\r)^
{(1-1/p_2)p_-}\,dx\,\frac{dt}{t}\noz\\
&\le\int_0^{\az/\lz}
\lf(\int_{\{x\in\rn:\ |f(x)|>\az\lf(\frac{t\lz}{\az}\r)^{1/{p_2}}\}}
\lf[\frac{\az}{\lz}\lf(\frac{t\lz}{\az}\r)^{1/{p_2}}\r]
^{p(x)}\,dx\r)\lf(\frac{t\lz}{\az}\r)^
{(1-1/p_2)p_-}\,\frac{dt}{t}\noz\\
&\ls\int_0^{\az/\lz}\lf(\frac{t\lz}{\az}\r)
^{(1-1/p_2)p_-}\,\frac{dt}{t}
\sim 1.
\end{align}
Observe that, when $t\in(\frac{\az}{\lz}, \fz)$,
$(|f_{\az,2}|/{\az})^{p_2}<1<\frac{t\lz}{\az}$
and hence ${\rm II}_2=0$.
From this, \eqref{eme} and \eqref{eme1},
we deduce that \eqref{emb} holds true when $i=2$,
which implies that the above claim \eqref{emb}
holds true.
This finishes the proof of Theorem \ref{mp1}.
\end{proof}

As a simple consequence of Theorem \ref{mp1} and Remark \ref{r-hlb},
we immediately obtain the following boundedness of $\cm$ on $\wlv$,
which is of independent  interest, the details being omitted.

\begin{cor}\label{mc1}
Let $p(\cdot)\in C^{\log}(\rn)$ satisfy $1<p_-\le p_+<\fz$, where
$p_-$ and $p_+$ are as in \eqref{2.1x}.
Then the Hardy-Littlewood maximal operator $\cm$ is bounded on $\wlv$.
\end{cor}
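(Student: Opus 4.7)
The plan is to deduce Corollary \ref{mc1} directly from the interpolation theorem \ref{mp1} just proved, using the strong-type $L^{rp(\cdot)}(\rn)$ bounds for $\cm$ recorded in Remark \ref{r-hlb} as the two endpoint estimates.

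First I would fix exponents $p_1$ and $p_2$ to which Theorem \ref{mp1} applies. Since $p_-\in(1,\fz)$, we have $1/p_-\in(0,1)$, so we may select some $p_1\in(1/p_-,1)$ and any $p_2\in(1,\fz)$. Observe that, for $i\in\{1,2\}$, the exponent $p_ip(\cdot)$ lies in $C^{\log}(\rn)$ (since constant multiples preserve \eqref{elog} and \eqref{edecay}) and satisfies
\begin{equation*}
(p_ip)_-=p_ip_->1,
\end{equation*}
where for $i=1$ this uses the choice $p_1>1/p_-$ and for $i=2$ it follows from $p_2>1$ and $p_->1$. In particular $L^{p_ip(\cdot)}(\rn)\subset L^1_{\loc}(\rn)$, so $\cm$ is well defined on $L^{p_1p(\cdot)}(\rn)+L^{p_2p(\cdot)}(\rn)$, and $\cm$ is sublinear there.

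Next I would invoke Remark \ref{r-hlb} to conclude that, for each $i\in\{1,2\}$, there exists a positive constant $C_i$ such that, for all $f\in L^{p_ip(\cdot)}(\rn)$,
\begin{equation*}
\|\cm(f)\|_{L^{p_ip(\cdot)}(\rn)}\le C_i\|f\|_{L^{p_ip(\cdot)}(\rn)}.
\end{equation*}
For any $\bz\in(0,\fz)$, writing $E_\bz:=\{x\in\rn:\ |\cm(f)(x)|>\bz\}$ and using Remark \ref{r-vlp}(ii) together with the pointwise inequality $\bz\chi_{E_\bz}\le|\cm(f)|\chi_{E_\bz}$, we obtain
\begin{equation*}
\bz\lf\|\chi_{E_\bz}\r\|_{L^{p_ip(\cdot)}(\rn)}
\le\lf\|\cm(f)\chi_{E_\bz}\r\|_{L^{p_ip(\cdot)}(\rn)}
\le\|\cm(f)\|_{L^{p_ip(\cdot)}(\rn)}
\le C_i\|f\|_{L^{p_ip(\cdot)}(\rn)},
\end{equation*}
which is precisely the weak-type hypothesis \eqref{interp-1} of Theorem \ref{mp1} (in the trivially weaker form furnished by the strong bound). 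Applying Theorem \ref{mp1} to $T:=\cm$ with the chosen $p_1$ and $p_2$ then yields a positive constant $C$ such that, for all $f\in\wlv$,
\begin{equation*}
\|\cm(f)\|_{\wlv}\le C\|f\|_{\wlv},
\end{equation*}
which is the assertion of the corollary.

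There is no substantive obstacle here: the only point requiring care is the admissibility of the exponent $p_1$, which must be strictly less than $1$ while still ensuring $p_1p_->1$ so that Remark \ref{r-hlb} applies; the assumption $p_->1$ makes the interval $(1/p_-,1)$ nonempty, so such a $p_1$ exists and the argument goes through.
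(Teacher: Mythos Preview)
Your proposal is correct and follows exactly the route the paper indicates: apply Theorem \ref{mp1} with $T=\cm$, using the strong $L^{p_ip(\cdot)}(\rn)$ bounds for $\cm$ as the weak-type endpoint hypotheses \eqref{interp-1}. The only cosmetic point is that Remark \ref{r-hlb} is literally stated for $r\in[1,\fz)$ while you need $r=p_1<1$, but your verification that $p_1p(\cdot)\in C^{\log}(\rn)$ with $(p_1p)_->1$ is precisely what makes the underlying result \cite[Theorem 3.16]{cfbook} apply, so the argument is sound.
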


Moreover, using Theorem \ref{mp1} and Lemma \ref{mlm1}, we now establish the
following vector-valued inequality of the Hardy-Littlewood maximal
operator $\cm$ on variable weak Lebesgue spaces.

\begin{prop}\label{mlmveq}
Let $p(\cdot)\in C^{\log}(\rn)$ satisfy $1<p_-\le p_+<\fz$,
with $p_-$ and $p_+$ as in \eqref{2.1x}, and $r\in(1,\fz)$.
Then there exists a positive
constant $C$ such that, for all sequences $\{f_j\}_{j\in\nn}$
of measurable functions,
$$\lf\|\lf\{\sum_{j\in\nn}\lf[\cm (f_j)\r]^r\r\}^{1/r}\r\|_{\wlv}
\le C\lf\|\lf(\sum_{j\in\nn}|f_j|^r\r)^{1/r}\r\|_{\wlv},$$
where $\cm$ denotes the Hardy-Littlewood maximal operator as in \eqref{2.2x}.
\end{prop}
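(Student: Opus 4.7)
The plan is to recast the asserted inequality as a weak-type bound for the vector-valued sublinear operator
\begin{equation*}
T(\vec{f}):=\lf(\sum_{j\in\nn}[\cm(f_j)]^r\r)^{1/r}
\end{equation*}
acting on sequences $\vec{f}=\{f_j\}_{j\in\nn}$, and to derive it by repeating the Marcinkiewicz-type interpolation argument of Theorem \ref{mp1}, with the scalar magnitude $|f|$ there replaced here by the majorant $F:=|\vec{f}|_{\ell^r}=(\sum_{j\in\nn}|f_j|^r)^{1/r}$.

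First I would set up the two endpoint weak-type inequalities via Lemma \ref{mlm1}. Fix any $p_1\in(1/p_-,1)$ and $p_2\in(1,\fz)$. Then $p_ip(\cdot)\in C^{\log}(\rn)$ with $1<p_ip_-\le p_ip_+<\fz$ for $i\in\{1,2\}$, so Lemma \ref{mlm1} gives the Fefferman-Stein strong-type inequality $\|T(\vec{f})\|_{L^{p_ip(\cdot)}(\rn)}\le C_i\|F\|_{L^{p_ip(\cdot)}(\rn)}$, and hence, via Chebyshev combined with Remark \ref{r-vlp}(ii), the weak-type analog of \eqref{interp-1},
\begin{equation*}
\bz\lf\|\chi_{\{x\in\rn:\ T(\vec{f})(x)>\bz\}}\r\|_{L^{p_ip(\cdot)}(\rn)}\le C_i\|F\|_{L^{p_ip(\cdot)}(\rn)},
\end{equation*}
holds for every $\bz\in(0,\fz)$ and $i\in\{1,2\}$.

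The interpolation step would then proceed exactly as in the proof of Theorem \ref{mp1}, but with the splitting now governed by $F$ rather than by $|f|$. Setting $\lz:=\|F\|_{\wlv}$ and, for each $\az\in(0,\fz)$, defining $f_{j,\az,1}:=f_j\chi_{\{F>\az\}}$ and $f_{j,\az,2}:=f_j\chi_{\{F\le\az\}}$, the sublinearity of $\cm$ together with Minkowski's inequality in $\ell^r$ yields the pointwise bound $T(\vec{f})\le T(\vec{f}_{\az,1})+T(\vec{f}_{\az,2})$, where $\vec{f}_{\az,i}:=\{f_{j,\az,i}\}_{j\in\nn}$. Since $|\vec{f}_{\az,1}|_{\ell^r}=F\chi_{\{F>\az\}}$ and $|\vec{f}_{\az,2}|_{\ell^r}=F\chi_{\{F\le\az\}}$, the two scalar integral estimates carried out in the proof of Theorem \ref{mp1} apply verbatim and give $\|\,|\vec{f}_{\az,i}|_{\ell^r}\|_{L^{p_ip(\cdot)}(\rn)}\ls\az(\lz/\az)^{1/p_i}$. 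Combining this with the endpoint weak-type inequalities above and the identity from Remark \ref{r-vlp}(i) delivers $\az\|\chi_{\{T(\vec{f})>\az\}}\|_{\lv}\ls\lz$ uniformly in $\az\in(0,\fz)$, and taking the supremum over $\az$ finishes the proof.

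The main obstacle is more notational than conceptual: Theorem \ref{mp1} is stated for scalar sublinear operators, so one must check that its proof transfers to $T$, where the control quantity on the input side is $\|F\|$ rather than a scalar $\|f\|$. This transfer succeeds precisely because both the splitting $f_j=f_{j,\az,1}+f_{j,\az,2}$ and the two integral estimates inside the proof of Theorem \ref{mp1} depend only on the distribution function of the scalar majorant $F=|\vec{f}|_{\ell^r}$, not on the individual components $f_j$; once this is observed, no new ingredient beyond Lemma \ref{mlm1} and Theorem \ref{mp1} is required.
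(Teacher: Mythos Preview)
Your argument is correct, but the paper takes a slightly slicker route that avoids re-opening the proof of Theorem \ref{mp1}. Rather than treating the vector-valued operator directly and re-running the interpolation argument with the scalar majorant $F$, the paper fixes the sequence $\{f_j\}_{j\in\nn}$ and defines a genuinely scalar sublinear operator $A(g):=\bigl(\sum_{j\in\nn}[\cm(g\eta_j)]^r\bigr)^{1/r}$, where $\eta_j:=f_j/F$ on $\{F\neq 0\}$ and $\eta_j:=0$ otherwise. Since $|\eta_j|_{\ell^r}\le 1$ pointwise, Lemma \ref{mlm1} gives $\|A(g)\|_{L^{p_ip(\cdot)}(\rn)}\ls\|g\|_{L^{p_ip(\cdot)}(\rn)}$ for $i\in\{1,2\}$, so Theorem \ref{mp1} applies to $A$ as a black box, and evaluating at $g=F$ recovers exactly the desired estimate because $A(F)=\bigl(\sum_{j\in\nn}[\cm(f_j)]^r\bigr)^{1/r}$.

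The two approaches are equivalent in substance: your observation that the splitting and the integral estimates in Theorem \ref{mp1} depend only on the distribution of $F$ is precisely what the normalization trick $\eta_j=f_j/F$ encodes. The paper's packaging buys a cleaner citation of Theorem \ref{mp1} without needing to revisit its proof; your version makes the mechanism more transparent but is a bit longer to write out.
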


\begin{proof}
To prove this proposition, let $\{f_j\}_{j\in\nn}$ be a given arbitrary sequence of
measurable functions and, for any measurable function
$g$ and $x\in\rn$, define
$$A(g)(x):=\lf\{\sum_{j\in\nn}
\lf[\cm(g\eta_j)(x)\r]^r\r\}^{\frac{1}{r}},$$
where $r\in(1,\fz)$ and, for any $y\in\rn$,
$$\eta_j(y):=\frac{f_j(y)}
{[\sum_{j\in\nn}|f_j(y)|^r]^{1/r}}\quad
{\rm if}\quad \lf[\sum_{j\in\nn}\lf|f_j(y)\r|^r\r]^{1/r}\neq0,$$
and $\eta_j(y):=0$ otherwise.
Then, by the Minkowski inequality, we find that, for any measurable functions
$g_1$, $g_2$ and $x\in\rn$,
\begin{align*}
&A(g_1+g_2)(x)\\
&\hs=\lf\{\sum_{j\in\nn}\lf[
\cm\lf([g_1+g_2]\eta_j\r)(x)\r]^r\r\}^{\frac{1}{r}}
\le\lf\{\sum_{j\in\nn}\lf[\cm(g_1\eta_j)(x)+
\cm(g_2\eta_j)(x)\r]^r\r\}^{\frac{1}{r}}\\
&\hs\le\lf\{\sum_{j\in\nn}
\lf[\cm(g_1\eta_j)(x)\r]^r\r\}^{\frac{1}{r}}
+\lf\{\sum_{j\in\nn}\lf[\cm(g_2\eta_j)(x)\r]^r\r\}
^{\frac{1}{r}}
=A(g_1)(x)+A(g_2)(x).
\end{align*}
Thus, $A$ is sublinear. Moreover, by Lemma \ref{mlm1},
we know that, for any $p_1\in(\frac{1}{p_-},1)$, $p_2\in(1,\fz)$ and
measurable function $h$,
\begin{align*}
\lf\|A(h)\r\|_{L^{p_ip(\cdot)}(\rn)}
&=\lf\|\lf\{\sum_{j\in\nn}\lf[\cm(h\eta_j)\r]^r\r\}^{\frac{1}{r}}\r\|
_{L^{p_ip(\cdot)}(\rn)}\\
&\ls\lf\|\lf(\sum_{j\in\nn}|h\eta_j|^r\r)^{\frac{1}{r}}\r\|
_{L^{p_ip(\cdot)}(\rn)}
\sim\|h\|_{L^{p_ip(\cdot)}(\rn)},
\end{align*}
which implies that the operator $A$ is bounded on
$L^{p_ip(\cdot)}(\rn)$, where $i\in\{1,2\}$.
Now, letting $g:=(\sum_{j=1}^\fz|f_j|^r)^{1/r}$,
then, by Theorem \ref{mp1}, we conclude that
\begin{align*}
\lf\|\lf\{\sum_{j\in\nn}\lf[\cm(f_j)\r]^r\r\}^{\frac{1}{r}}\r\|_{\wlv}
&=\lf\|A(g)\r\|_{\wlv}\\
&\ls\|g\|_{\wlv}
\sim\lf\|\lf(\sum_{j\in\nn}\lf|f_j\r|^r\r)^{1/r}\r\|_{\wlv},
\end{align*}
which completes the proof of Proposition \ref{mlmveq}.
\end{proof}

\subsection{Maximal function characterizations of $\whv$\label{s3.2}}

\hskip\parindent
We begin with the following definitions of the radial maximal
function and the non-tangential maximal function.

\begin{defn}
Let $\psi\in\cs(\rn)$ and $\int_\rn \psi(x)\,dx\neq0$.
Let $f\in\cs'(\rn)$.
The \emph{radial maximal function} of $f$
associated to $\psi$ is defined by setting, for all $x\in\rn$,
\begin{align}\label{3.5x}
\psi_{+}^\ast(f)(x):=\sup_{t\in(0,\fz)}\lf|f*\psi_t(x)\r|
\end{align}
and, for any $a\in(0,\fz)$, the \emph{non-tangential maximal function} of $f$
associated to $\psi$ is defined by setting, for all $x\in\rn$,
$$\psi_{\triangledown,a}^\ast(f)(x)
:=\sup_{t\in(0,\fz),\,|y-x|<at}\lf|f*\psi_t(y)\r|.$$
When $a=1$, we simply use $\psi_{\triangledown}^\ast(f)$ to denote
$\psi_{\triangledown,a}^\ast(f)$.
\end{defn}

In what follows, for any $N\in\nn$ and $a\in(0,\fz)$,
the \emph{non-tangential grand maximal function} of $f\in\cs'(\rn)$
is defined by setting, for all $x\in\rn$,
$$f_{N,\triangledown,a}^{\ast}(x):=\sup_{\psi\in\cf_N(\rn)}
\sup_{t\in(0,\fz),\,|y-x|<at}\lf|f*\psi_t(y)\r|,$$
where $\cf_N(\rn)$ is as in \eqref{2.4x}.
When $a=1$, we simply denote $f_{N,\triangledown,a}^{\ast}$ by
$f_{N,\triangledown}^\ast$.

\begin{rem}\label{r-m-fun}
Let $f\in\cs'(\rn)$.
\begin{enumerate}
\item[(i)] From the definitions of $f_{N,+}^\ast$ and $f_{N,\triangledown}^\ast$,
and \cite[Proposition 2.1]{zyl} (see also \cite[Lemma 7.9]{cw14}),
we deduce that there exists a positive constant
$C$ such that, for all $x\in\rn$,
$$C^{-1}f_{N,\triangledown}^\ast(x)\le f_{N,+}^\ast(x)
\le Cf_{N,\triangledown}^\ast(x).$$

\item[(ii)] For any $a\in(0,\fz)$ and $\psi\in\cs(\rn)$, it is easy to see that
$\psi_{\triangledown,a}^\ast(f)\le C f_{N,\triangledown}^\ast$ pointwise,
where $C$ is a positive constant independent of $f$.
\end{enumerate}
\end{rem}

A distribution $f\in\cs'(\rn)$ is called a \emph{bounded distribution} if, for all
$\phi\in\cs(\rn)$, $f\ast \phi\in L^\fz(\rn)$. For a bounded distribution $f$,
its \emph{non-tangential maximal function},
with respect to {Poisson kernels} $\{P_t\}_{t>0}$, is defined by setting,
for all $x\in\rn$,
$$\cn (f)(x):=\sup_{t\in(0,\fz),\,|y-x|<t}\lf|f*P_t(y)\r|,$$
where, for all $x\in\rn$ and $t\in(0,\fz)$,
\begin{align}\label{1.23y}
P_t(x):=\frac{\Gamma([n+1]/2)}{\pi^{(n+1)/2}}
\frac{t}{(t^2+|x|^2)^{(n+1)/2}}
\end{align}
and $\Gamma$ denotes the Gamma function.

The following conclusion is the main result of this subsection, which
gives out the maximal function characterizations of the space $\whv$.

\begin{thm}\label{mthm1}
Let $p(\cdot)\in C^{\log}(\rn)$.
Suppose that $N\in(\frac{n}{\underline{p}}+n+1,\fz)$
is a positive integer, where $\underline{p}$ is as in \eqref{2.1y}.
Then the following items are equivalent:
\begin{enumerate}
\item[{\rm (i)}] $f\in\whv$, namely, $f\in\cs'(\rn)$ and $f_{N,+}^\ast\in\wlv$;

\item[{\rm (ii)}] $f$ is a bounded distribution and $\cn (f)\in\wlv$;

\item[{\rm (iii)}] $f\in\cs'(\rn)$ and there exists a $\psi\in\cs(\rn)$ with
$\int_{\rn}\psi(x)\,dx=1$ such that $\psi_{+}^\ast(f)\in\wlv$.
\end{enumerate}

Moreover, for any $f\in \whv$, it holds true that
$$\lf\|f_{N,+}^\ast\r\|_{\wlv}\sim\lf\|\cn(f)\r\|_{\wlv}
\sim\lf\|\psi_{+}^\ast(f)\r\|_{\wlv},$$
where the implicit equivalent positive constants are independent of $f$.
\end{thm}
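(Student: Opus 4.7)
The plan is to adapt the classical Fefferman--Stein approach (cf.\ \cite[p.\,91, Theorem 1]{stein93} and \cite[Theorem 2.1.4]{Gra14}) to the weak variable-exponent setting via the tools developed in Subsection \ref{s3.1}, showing (i) $\Rightarrow$ (iii), (i) $\Rightarrow$ (ii), (ii) $\Rightarrow$ (iii), and (iii) $\Rightarrow$ (i) with quantitative control of the quasi-norms at each step.

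The implication (i) $\Rightarrow$ (iii) is immediate: for any $\psi\in\cs(\rn)$ with $\int_{\rn}\psi(x)\,dx=1$, the number $C_\psi:=\sum_{|\bz|\le N}\sup_{x\in\rn}(1+|x|)^N|D^\bz\psi(x)|$ is finite, so $\psi/C_\psi\in\cf_N(\rn)$ and hence $\psi_+^\ast(f)\le C_\psi f_{N,+}^\ast$ pointwise. For (i) $\Rightarrow$ (ii), I would first show that $f$ is a bounded distribution: given $\phi\in\cs(\rn)$, the function $f\ast\phi$ is continuous and pointwise dominated by $C_\phi f_{N,+}^\ast$, so a ``level set contains a ball'' argument together with the fact that $f_{N,+}^\ast\in\wlv$ (which forces $\|\chi_{\{f_{N,+}^\ast>\lz\}}\|_{\lv}\to0$ as $\lz\to\fz$) yields $\|f\ast\phi\|_{L^\fz(\rn)}<\fz$. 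To control $\cn(f)$, I would approximate the Poisson kernel by writing $P_t=\phi_t^{(1)}+P_t^{(2)}$ with $\phi^{(1)}\in\cs(\rn)$ and a tail $P_t^{(2)}$ handled via the bounded-distribution property of $f$; combining this with Remark \ref{r-m-fun}(i) then gives $\|\cn(f)\|_{\wlv}\ls\|f_{N,+}^\ast\|_{\wlv}$.

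The heart of the proof is the reverse direction (iii) $\Rightarrow$ (i), with (ii) $\Rightarrow$ (iii) following a parallel route after a further Poisson-to-Schwartz approximation. The plan is to establish, for some $r\in(0,\underline{p})$ chosen close enough to $\underline{p}$ that $N>n/r+n+1$ (possible thanks to the hypothesis $N>n/\underline{p}+n+1$), the pointwise Tauberian-type inequality
\begin{equation*}
f_{N,\triangledown}^\ast(x)\le C\lf[\cm\lf(|\psi_+^\ast(f)|^r\r)(x)\r]^{1/r}\quad\text{for all}\ x\in\rn,
\end{equation*}
via the reproducing-formula argument of \cite[Theorem 2.1.4]{Gra14}. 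Taking $\wlv$-quasi-norms of both sides, rewriting via Lemma \ref{mlmim} as
$$\lf\|\lf[\cm\lf(|\psi_+^\ast(f)|^r\r)\r]^{1/r}\r\|_{\wlv}=\lf\|\cm\lf(|\psi_+^\ast(f)|^r\r)\r\|_{W\!L^{p(\cdot)/r}(\rn)}^{1/r},$$
and applying Corollary \ref{mc1} at the scale $p(\cdot)/r$ (legitimate since $p_-/r>1$) together with Remark \ref{r-m-fun}(i), we arrive at $\|f_{N,+}^\ast\|_{\wlv}\ls\|\psi_+^\ast(f)\|_{\wlv}$.

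The main obstacle is the simultaneous calibration $N>n/r+n+1$ and $p_-/r>1$: the first is needed to run the pointwise Tauberian bound, and the second to apply Corollary \ref{mc1}; the hypothesis $N>n/\underline{p}+n+1$ provides exactly the slack needed to pick such an $r\in(0,\underline{p})$. Once the pointwise bound is secured, the variable-exponent aspect of the problem is fully absorbed by Corollary \ref{mc1} and Lemma \ref{mlmim}, so no additional complication beyond the classical proof arises from the weak or the variable-exponent setting.
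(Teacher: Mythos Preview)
Your plan for (i)$\Rightarrow$(ii)$\Rightarrow$(iii) is essentially the paper's, but the key step (iii)$\Rightarrow$(i) has a genuine gap. The pointwise inequality
\[
f_{N,\triangledown}^\ast(x)\le C\bigl[\cm\bigl(|\psi_+^\ast(f)|^r\bigr)(x)\bigr]^{1/r}\quad\text{for all }x\in\rn
\]
is not what the reproducing-formula argument of \cite[Theorem 2.1.4]{Gra14} delivers. What that argument gives (cf.\ the paper's \eqref{max-f2}) is $[M_{\psi,T}(f)]^q\le\cm\bigl([\psi_{\triangledown}^\ast(f)]^q\bigr)$ with the \emph{non-tangential} maximal function $\psi_{\triangledown}^\ast(f)$ on the right, not the radial one $\psi_+^\ast(f)$. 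The passage from $\psi_{\triangledown}^\ast(f)$ down to $\psi_+^\ast(f)$ is a separate matter: the pointwise bound $\psi_{\triangledown}^\ast(f)(x)\ls[\cm([\psi_+^\ast(f)]^q)(x)]^{1/q}$ holds only on a good set $E:=\{f_{N,+}^\ast<\eta\,\psi_{\triangledown}^\ast(f)\}$ (see \cite[p.\,96]{stein93}), and closing the estimate requires absorbing the contribution of $E^{\complement}$ back into the left-hand side.

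That absorption step needs the a priori finiteness of $\|\psi_{\triangledown}^\ast(f)\|_{\wlv}$, which is \emph{not} part of the hypothesis (iii). This is precisely why the paper introduces the truncated tangential and non-tangential maximal functions $M_{\psi,1}^{\epz,L}(f)$, $\bar M_{\psi,T}^{\epz,L}(f)$, $M_{\wz N}^{\epz,L}(f)$: for each fixed $\epz\in(0,1/3)$ and $L$ large, these are automatically in $\wlv$ (polynomial decay plus Corollary \ref{mc1}), so the good-$\lambda$ absorption can be carried out at the truncated level to yield $\|M_{\psi,1}^{\epz,L}(f)\|_{\wlv}\ls\|\psi_+^\ast(f)\|_{\wlv}$ uniformly in $\epz$, and then one lets $\epz\to0$ via the Fatou lemma (Lemma \ref{10.24.x1}). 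Your outline skips this entire mechanism; without it, the chain breaks at the non-tangential-to-radial step.
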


\begin{proof}
{\bf STEP 1}: In this step, we show
${\rm (i)}\Rightarrow{\rm (ii)}\Rightarrow{\rm (iii)}.$

Suppose that ${\rm (i)}$ holds true, namely, $f\in\cs'(\rn)$ and $f_{N,+}^\ast\in\wlv$
with $N$ as in Theorem \ref{mthm1}.
To prove (ii), we first show that $f$ is a bounded distribution.
Indeed, by Remark \ref{r-m-fun}(i), we easily know that
there exists a positive constant $C_{(N)}$ such that,
for any $\phi\in\cs(\rn)$, $x\in\rn$ and $y\in B(x, 1)$,
$|f*{\phi}(x)|\le\frac1{C_{(N)}}f_{N,+}^\ast(y)$,
since
$$|f*{\phi}(x)|\ls f_{N,\triangledown}^\ast(y)\ls f_{N,+}^\ast(y).$$
Thus, for any $x\in\rn$, we have
$$B(x, 1)\subset\{y\in\rn:\ f_{N,+}^\ast(y)
\ge C_{(N)}|f*\phi(x)|\}=:\Omega_{f,x}.$$
By this and Remark \ref{r-vlp}(ii), we conclude that
\begin{align}\label{max-f3}
&\min\{\lf|f*\phi(x)\r|^{p_-}, \lf|f*\phi(x)\r|^{p_+}\}\noz\\
&\hs\le\min\{\lf|f*\phi(x)\r|^{p_-}, \lf|f*\phi(x)\r|^{p_+}\}
\frac{1}{|B(x,1)|}\int_{\rn}\chi_{\Omega_{f,x}}(y)\,dy\noz\\
&\hs\ls\min\{\lf|f*\phi(x)\r|^{p_-}, \lf|f*\phi(x)\r|^{p_+}\}
\int_{\Omega_{f,x}}\lf[\frac{1}{\|\chi_{\Omega_{f,x}}\|_{\lv}}\r]^{p(y)}
\|\chi_{\Omega_{f,x}}\|_{\lv}^{p(y)}\,dy\noz\\
&\hs\ls\min\{\lf|f*\phi(x)\r|^{p_-},\lf|f*\phi(x)\r|^{p_+}\}
\max\lf\{\lf\|\chi_{\Omega_{f,x}}\r\|_{\lv}^{p_-},
\lf\|\chi_{\Omega_{f,x}}\r\|_{\lv}^{p_+}\r\}\noz\\
&\hs\ls\max\lf\{\|f_{N,+}^\ast\|_{\wlv}^{p_-},\|f_{N,+}^\ast\|
_{\wlv}^{p_+}\r\}<\fz,
\end{align}
which implies that $f\ast \phi\in L^\fz(\rn)$.
Therefore, $f$ is a bounded distribution.

Next, we show that $\cn(f)\in \wlv$. By \cite[p.\,98]{stein93},
we know that, for all $x\in\rn$,
$$P_1(x)=\sum_{k=0}^{\fz}2^{-k}\psi_{2^k}^{(k)}(x),$$
where $\{\psi^{(k)}\}_{k\in\nn}\st\cs(\rn)$ have uniformly bounded seminorms
in $\cs(\rn)$ and $P_1$ is the Poisson kernel as in \eqref{1.23y} with $t=1$.
From this decomposition, it follows that, for any $t\in(0,\fz)$, $x\in\rn$ and
$y\in B(x,t)$,
\begin{align*}
\lf|f*P_t(y)\r|\le\sum_{k=0}^{\fz}2^{-k}\lf|f*{\psi}_{2^kt}^{(k)}(y)\r|
\le\sum_{k=0}^{\fz}2^{-k}(\psi^{(k)})_{\triangledown}^\ast (f)(x),
\end{align*}
which implies that, for all $x\in\rn$,
\begin{equation}\label{emeqp}
\cn(f)(x)\le \sum_{k=0}^{\fz}2^{-k}(\psi^{(k)})_{\triangledown}^\ast (f)(x).
\end{equation}
Since $\{\psi^{(k)}\}_{k\in\nn}$ have uniformly bounded seminorms in $\cs(\rn)$,
it follows, from \eqref{emeqp}, Remarks \ref{10.24.x1},
\ref{r-ar} and \ref{r-m-fun}, that
\begin{align*}
\lf\|\cn (f)\r\|_{\wlv}^v
&\le\lf\|\sum_{k=0}^{\fz}2^{-k}
(\psi^{(k)})_{\triangledown}^\ast (f)\r\|_{\wlv}^v\\
&\ls\sum_{k=0}^{\fz}2^{-kv}
\lf\|(\psi^{(k)})_{\triangledown}^\ast (f)\r\|_{\wlv}^v
\ls\lf\|f_{N,+}^\ast\r\|_{\wlv}^v,
\end{align*}
where $v$ is as in Remark \ref{r-ar}.
This shows that $\cn(f)\in\wlv$ and hence (ii) holds true.

Finally, assume that (ii) holds true, namely,
$f$ is a bounded distribution and $\cn (f)\in\wlv$.
Then, by \cite[p.\,99]{stein93}, we know that there exists $\psi\in\cs(\rn)$
with $\int_\rn\psi(x)\,dx=1$ such that, for all $x\in\rn$,
$\psi_{+}^\ast(f)(x)\ls \cn (f)(x)$.
Therefore, (iii) holds true, which completes the proof of STEP 1.

{\bf STEP 2}: In this step, we prove ${\rm (iii)}\Rightarrow{\rm (i)}$.

Assume that (iii) holds true, namely, $f\in\cs'(\rn)$ and there exists
$\psi\in\cs(\rn)$ with $\int_{\rn}\psi(x)\,dx=1$ such that
$\psi_{+}^\ast(f)\in\wlv$.
Since $N\in(\frac{n}{\underline{p}}+n+1,\fz)$, it follows that there exists a positive constant
$T>\frac{n}{\underline{p}}$ such that $N>T+n+1$.
From this and \cite[(3.1)]{cw14},
it follows that, for all $x\in\rn$,
\begin{equation}\label{emeq1}
f_{N,+}^\ast(x)\ls M_{\psi,T}(f)(x),
\end{equation}
where
$$M_{\psi,T}(f)(x):=\sup_{t\in(0,\fz),\,y\in\rn}|f*\psi_t(x-y)
|\lf(1+\frac{|y|}{t}\r)^{-T},\quad\forall\, x\in\rn.$$
On the other hand, by the proof of \cite[Theorem 2.1.4(c)]{Gra14},
we find that, for $q:=\frac{n}{T}$ and all $x\in\rn$,
$[M_{\psi,T}(f)(x)]^q\le\cm([\psi_{\triangledown}^\ast(f)]^q)(x).$
Thus, by the fact that $T>\frac n{\underline p}$,
Lemma \ref{mlmim} and Corollary \ref{mc1},
we conclude that
\begin{align}\label{max-f2}
\lf\|M_{\psi,T}(f)\r\|_{\wlv}
&=\lf\|\lf[M_{\psi,T}(f)\r]^q\r\|
_{WL^{p(\cdot)/q}(\rn)}^{1/q}
\le\lf\|\cm\lf(\lf[\psi_{\triangledown}^\ast(f)\r]^q\r)\r\|
_{WL^{p(\cdot)/q}(\rn)}^{1/q}\noz\\
&\ls\lf\|\lf[\psi_{\triangledown}^\ast(f)\r]^q\r\|
_{WL^{p(\cdot)/q}(\rn)}^{1/q}
\sim\lf\|\psi_{\triangledown}^\ast(f)\r\|_{\wlv}.
\end{align}
Now we claim that
\begin{align}\label{emeq3}
\lf\|\psi_{\triangledown}^\ast(f)\r\|_{\wlv}
\ls\lf\|\psi_{+}^\ast(f)\r\|_{\wlv}.
\end{align}
Assuming that this claim holds true for the time being, then,
due to \eqref{emeq1} and \eqref{max-f2}, we have
$$\lf\|f_{N,+}^\ast\r\|_{\wlv}\ls
\lf\|\psi_{+}^\ast(f)\r\|_{\wlv}<\fz,$$
which implies that (i) holds true.

To show the claim \eqref{emeq3}, we first assume that
$\psi_{\triangledown}^\ast(f)\in\wlv$, which will be proved later.
For any $\eta\in(0,\fz)$, let
$E:=\{x\in\rn:\ f_{N,+}^\ast(x)<\eta \psi_{\triangledown}^\ast(f)(x)\}.$
Then, by \eqref{emeq1} and \eqref{max-f2}, we know that there exists
a positive constant $C_0$ such that
\begin{align*}
\lf\|\psi_{\triangledown}^\ast(f)\chi_{E^{\com}}\r\|_{\wlv}
&\le\frac{1}{\eta}\lf\|f_{N,+}^\ast\chi_{E^{\com}}\r\|_{\wlv}\\
&\le\frac{1}{\eta}\lf\|f_{N,+}^\ast\r\|_{\wlv}
\le\frac{C_0}{\eta}\lf\|\psi_{\triangledown}^\ast(f)\r\|_{\wlv}.
\end{align*}
Thus, by Lemma \ref{mlm2}(iii), we find that
\begin{align*}
\lf\|\psi_{\triangledown}^\ast(f)\r\|_{\wlv}
&\le \widetilde{C}\lf[\lf\|\psi_{\triangledown}^\ast(f)\chi_E\r\|_{\wlv}
+\lf\|\psi_{\triangledown}^\ast(f)\chi_{E^{\com}}\r\|_{\wlv}\r]\\
&\le \widetilde{C}\lf\|\psi_{\triangledown}^\ast(f)\chi_E\r\|_{\wlv}
+\frac{\widetilde{C}C_0}{\eta}\lf\|\psi_{\triangledown}^\ast(f)\r\|_{\wlv},
\end{align*}
where $\widetilde{C}$ is a positive constant independent of $f$ and $\eta$.
By this and via choosing $\eta:=2\widetilde{C}C_0$, we conclude that
\begin{align}\label{emeq5}
\lf\|\psi_{\triangledown}^\ast(f)\r\|_{\wlv}
\le2\widetilde{C}\lf\|\psi_{\triangledown}^\ast(f)\chi_E\r\|_{\wlv}.
\end{align}
On the other hand, by \cite[p.\,96]{stein93},
we know that, for any $q\in(0,\underline{p})$ and all $x\in E$,
$$\psi_{\triangledown}^\ast(f)(x)
\ls\lf[\cm\lf(\lf[\psi_{+}^\ast(f)\r]^{q}\r)(x)\r]^{1/{q}},$$
which, combined with Lemma \ref{mlmim} and Corollary \ref{mc1}, implies that
\begin{align*}
\lf\|\psi_{\triangledown}^\ast(f)\chi_E\r\|_{\wlv}
&\ls\lf\|\lf[\cm\lf(\lf[\psi_{+}^\ast(f)\r]^{q}\r)\r]
^{1/{q}}\r\|_{\wlv}
\sim\lf\|\cm\lf(\lf[\psi_{+}^\ast(f)\r]^{q}\r)\r\|
_{WL^{p(\cdot)/{q}}(\rn)}^{1/{q}}\\
&\ls\lf\|\lf[\psi_{+}^\ast(f)\r]^{q}\r\|_
{WL^{p(\cdot)/{q}}(\rn)}^{1/{q}}\sim\lf\|\psi_{+}^\ast(f)\r\|_{\wlv}.
\end{align*}
From this and \eqref{emeq5}, we deduce that \eqref{emeq3} holds true.

Next we show that $\psi_{\triangledown}^\ast(f)\in\wlv$.
To this end, for any $\epsilon\in(0,\frac{1}{3}),\ L\in(0,\fz)$,
$T\in(\frac{n}{\underline p},\fz)$,
$\wz N\in\mathbb N$ and $x\in\rn$, let
$$M_{\psi}^{\epz, L}(f)(x):=\sup_{t\in(0,1/\epsilon)}\lf|f*\psi_t(x)\r|
\frac{t^L}{(t+\epsilon+\epsilon|x|)^L},$$
$$M_{\wz N}^{\epz, L}(f)(x):=\sup_{\psi\in\cf_{\wz N}(\rn)}
M_{\psi}^{\epz, L}(f)(x),$$
$$M_{\psi, 1}^{\epz, L}(f)(x):=
\sup_{t\in(0,1/\epz),\,|y-x|<t}\lf|f*{\psi}_t(y)\r|
\frac{t^L}{(t+\epz+\epz|y|)^L}$$
and
$$\bar {M}_{\psi, T}^{\epz, L}(f)(x):=
\sup_{t\in(0,1/\epz),\,y\in\rn}\lf|f*{\psi}_t(x-y)\r|
\lf(1+\frac{|y|}{t}\r)^{-T}\frac{t^L}{(t+\epz+\epz|x-y|)^L}.$$

By \cite[p.\,45]{Gra14}, we know that there exist positive constants $m$ and $l$
such that, for all $\epsilon\in(0,\frac{1}{3})$ and $x\in\rn$,
$$M_{\psi, 1}^{\epz, L}(f)(x)
\sim\sup_{t\in(0,1/\epz),\,|y-x|<t}\lf|f*{\psi}_t(y)\r|
\lf(\frac{t}{t+\epz}\r)^L\frac{1}{(1+\epz|y|)^L}
\ls\frac{C_{(f,\psi,\epsilon,n,l,m,L)}}{(1+\epsilon|x|)^{L-m}}.$$
Observe that, for all $x\in\rn$,
$$(1+\epsilon|x|)^{m-L}
\le\epsilon^{m-L}(1+|x|)^{m-L}\ls\epsilon^{m-L}
[\cm(\chi_{B(0,1)})(x)]^{\frac{L-m}{n}}.$$
By this via taking $L\in(m+\frac{n}{p_-},\fz)$, Lemma \ref{mlmim}
and Corollary \ref{mc1}, we conclude that, for all $\epsilon\in(0,\frac{1}{3})$,
$$\lf\|M_{\psi, 1}^{\epz, L}(f)\r\|_{\wlv}
\ls\lf\|\cm(\chi_{B(0,1)})\r\|
_{WL^{\frac{(L-m)p(\cdot)}{n}}(\rn)}^{\frac{L-m}{n}}
\ls\|\chi_{B(0,1)}\|_{\wlv}<\fz,$$
which implies that, for all $\epsilon\in(0,\frac{1}{3})$,
$M_{\psi, 1}^{\epz, L}(f)\in\wlv$. Moreover, by \cite[p.\,460]{cw14}, we know that
there exists $\wz N\in[T+L+n+1,\fz)$ such that, for all
$\epsilon\in(0,\frac1{3})$ and $x\in\rn$,
\begin{align}\label{emeqex}
M_{\wz N}^{\epz, L}(f)(x)\ls \bar {M}_{\psi, T}^{\epz, L}(f)(x).
\end{align}
For any $\lz\in(0,\fz)$, let
$F:=\{x\in\rn:\ M_{\wz N}^{\epz, L}(f)(x)<\lz M_{\psi, 1}^{\epz, L}(f)(x)\}$.
Then, by an argument similar to that used in the proof of
\cite[(3.7)]{cw14}, we know that, for all $x\in F$,
\begin{align}\label{10.1.x}
M_{\psi, 1}^{\epz, L}(f)(x)
\ls \lf\{\cm\lf(\lf[\psi_{+}^\ast(f)\r]
^{\underline{p}}\r)(x)\r\}^{1/{\underline{p}}}.
\end{align}

On the other hand, observe that, for all $\epsilon\in(0,1/3)$, $t\in(0,1/\epz)$
and $x,\ y\in\rn$, it holds true that, for all $z\in B(x-y,t)$,
$$\lf|\psi_t*f(x-y)\r|\frac{t^L}{(t+\epsilon+\epsilon|x-y|)^L}
\le M_{\psi, 1}^{\epz, L}(f)(z).$$
By this and the fact that $B(x-y,t)\subset B(x,|y|+t)$, we find that,
for $q=n/T$, $\epsilon\in(0,1/3)$, $t\in(0,1/\epz)$
and $x,\ y\in\rn$,
\begin{align*}
&\lf[\lf|\psi_t*f(x-y)\r|\frac{t^L}{(t+\epsilon+\epsilon|x-y|)^L}\r]^q\\
&\hs\le\frac{|B(x,|y|+t)|}{|B(x-y,t)|}\frac{1}{|B(x,|y|+t)|}
\int_{B(x,|y|+t)}\lf[M_{\psi, 1}^{\epz, L}(f)\r]^q(z)\,dz\\
&\hs\le\lf(1+\frac{|y|}{t}\r)^n\cm\lf(\lf[M_{\psi, 1}^{\epz, L}(f)\r]^q\r)(x),
\end{align*}
namely,
$$\lf[\lf|\psi_t*f(x-y)\r|\frac{t^L}{(t+\epsilon+\epsilon|x-y|)^L}
\lf(1+\frac{|y|}{t}\r)^{-T}\r]^q
\le\cm\lf(\lf[M_{\psi, 1}^{\epz, L}(f)\r]^q\r)(x),$$
which further implies that
$$\lf[\bar {M}_{\psi, T}^{\epz, L}(f)(x)\r]^q
\le\cm\lf(\lf[M_{\psi, 1}^{\epz, L}(f)\r]^q\r)(x).$$
From this, Lemma \ref{mlmim}, the fact that $q<\underline{p}$
and Corollary \ref{mc1}, we deduce that
\begin{align*}
\lf\|\bar {M}_{\psi, T}^{\epz, L}(f)\r\|_{\wlv}
&=\lf\|\lf[\bar {M}_{\psi, T}^{\epz, L}(f)\r]^q\r\|
_{WL^{p(\cdot)/q}(\rn)}^{1/q}
\le\lf\|\cm\lf(\lf[M_{\psi, 1}^{\epz, L}(f)\r]^q\r)\r\|
_{WL^{p(\cdot)/q}(\rn)}^{1/q}\\
&\ls\lf\|\lf[M_{\psi, 1}^{\epz, L}(f)\r]^q\r\|
_{WL^{p(\cdot)/q}(\rn)}^{1/q}
\sim\lf\|M_{\psi, 1}^{\epz, L}(f)\r\|_{\wlv},
\end{align*}
which, combined with \eqref{emeqex}, implies that
\begin{align}\label{emeqex1}
\lf\|M_{\wz N}^{\epz, L}(f)\r\|_{\wlv}
\ls\lf\|M_{\psi, 1}^{\epz, L}(f)\r\|_{\wlv}.
\end{align}

By \eqref{10.1.x}, \eqref{emeqex1}, the fact that
$M_{\psi, 1}^{\epz, L}(f)\in\wlv$ and an argument
similar to that used in the proof of \eqref{emeq3},
we conclude that, for all $\epz\in(0,1/3)$,
$$\lf\|M_{\psi, 1}^{\epz, L}(f)\r\|_{\wlv}
\ls \lf\|\psi_{+}^\ast(f)\r\|_{\wlv}<\fz$$
with the implicit positive constant independent of $\epz$.
By this, the fact that $M_{\psi, 1}^{\epz, L}(f)$ increases pointwise to
$\psi_{\triangledown}^\ast(f)$ as $\epz\rightarrow0$
for any $L\in(0,\fz)$ and Remark \ref{10.24.x1}, we find that
\begin{align*}
\lf\|\psi_{\triangledown}^\ast(f)\r\|_{\wlv}
&\le\liminf_{\epsilon\to0}\lf\|M_{\psi, 1}^{\epz, L}(f)\r\|_{\wlv}
\ls\lf\|\psi_{+}^\ast(f)\r\|_{\wlv}<\fz,\noz
\end{align*}
which implies that $\psi_{\triangledown}^\ast(f)\in\wlv$.
This finishes the proof of \eqref{emeq3} and hence STEP 2.
Thus, we complete the proof of Theorem \ref{mthm1}.
\end{proof}

By Remark \ref{r-m-fun} and Theorem \ref{mthm1}, we obtain the following
conclusion.

\begin{cor}\label{1.29.x1}
Let $p(\cdot)$ and $N$ be as in Theorem \ref{mthm1} and $a\in(0,\fz)$.
Then $f\in\whv$ if and only if one of the following items holds true:
\begin{enumerate}
\item[{\rm(i)}] $f\in\cs'(\rn)$ and there exists a $\psi\in\cs(\rn)$ with $\int_{\rn}\psi(x)\,dx=1$
such that $\psi_{\triangledown,a}^*(f)\in\wlv$;

\item[{\rm(ii)}] $f\in\cs'(\rn)$ and $f^*_{N,\triangledown}\in\wlv$.
\end{enumerate}

Moreover, for any $f\in\whv$, it holds true that
$$\|f\|_{\whv}\sim\|f^*_{N,\triangledown}\|_{\wlv}
\sim\|\psi_{\triangledown,a}^*(f)\|_{\wlv}$$
with the implicit equivalent positive constants independent of $f$.
\end{cor}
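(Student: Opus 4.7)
The strategy is to combine Theorem \ref{mthm1}, which already provides $\|f\|_{\whv}=\|f^*_{N,+}\|_{\wlv}\sim\|\psi^*_+(f)\|_{\wlv}$, with the two pointwise inequalities recorded in Remark \ref{r-m-fun}. No fresh heavy machinery is needed: the proof reduces to a short chain of pointwise comparisons together with the monotonicity of the $\wlv$ quasi-norm.

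For the equivalence of $f\in\whv$ with (ii), I would invoke Remark \ref{r-m-fun}(i), which asserts $f^*_{N,\triangledown}\sim f^*_{N,+}$ pointwise. Taking $\wlv$ quasi-norms on both sides gives $\|f^*_{N,\triangledown}\|_{\wlv}\sim\|f^*_{N,+}\|_{\wlv}=\|f\|_{\whv}$, so (ii) is equivalent to $f\in\whv$ with the claimed norm equivalence. For (i), one direction is immediate: Remark \ref{r-m-fun}(ii) yields $\psi^*_{\triangledown,a}(f)\le C f^*_{N,\triangledown}$ pointwise for any $\psi\in\cs(\rn)$ and any $a\in(0,\fz)$, so if $f\in\whv$ then $\psi^*_{\triangledown,a}(f)\in\wlv$ with $\|\psi^*_{\triangledown,a}(f)\|_{\wlv}\ls\|f\|_{\whv}$. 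Conversely, suppose there exists $\psi\in\cs(\rn)$ with $\int_{\rn}\psi(x)\,dx=1$ such that $\psi^*_{\triangledown,a}(f)\in\wlv$. The key observation is that $\psi^*_+(f)(x)\le\psi^*_{\triangledown,a}(f)(x)$ pointwise, since the supremum on the right-hand side in \eqref{3.5x} also allows the choice $y=x$, which trivially satisfies $|y-x|=0<at$ for every $t\in(0,\fz)$. Hence $\psi^*_+(f)\in\wlv$ with $\|\psi^*_+(f)\|_{\wlv}\le\|\psi^*_{\triangledown,a}(f)\|_{\wlv}$, and the implication (iii)$\Rightarrow$(i) of Theorem \ref{mthm1}, together with the quantitative equivalence $\|f\|_{\whv}\sim\|\psi^*_+(f)\|_{\wlv}$, delivers both $f\in\whv$ and $\|f\|_{\whv}\ls\|\psi^*_{\triangledown,a}(f)\|_{\wlv}$.

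The one point requiring a moment of care is the independence of (i) on the aperture parameter $a$: the $\ls$ direction uses Remark \ref{r-m-fun}(ii), whose implicit constant depends on the Schwartz seminorms of $\psi$ and on $a$, but this dependence is harmless because $\psi$ and $a$ are fixed before taking quasi-norms. I do not anticipate any substantive obstacle; the entire argument reduces to the pointwise comparisons of Remark \ref{r-m-fun} and the already-established maximal function characterizations in Theorem \ref{mthm1}.
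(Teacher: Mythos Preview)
Your proposal is correct and follows essentially the same approach as the paper, which simply states that the corollary is obtained ``By Remark \ref{r-m-fun} and Theorem \ref{mthm1}''. You have correctly spelled out the chain of pointwise comparisons from Remark \ref{r-m-fun}(i)--(ii) together with the trivial inequality $\psi^*_+(f)\le\psi^*_{\triangledown,a}(f)$, and then invoked Theorem \ref{mthm1}; this is exactly what the paper intends.
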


\section{Atomic characterizations of $\whv$\label{s-atom}}
\hskip\parindent
In this section, we establish the atomic characterization of $\whv$.
We begin with recalling the notion of $(p(\cdot),q,s)$-atoms introduced
by Nakai and Sawano in \cite[Definition 1.4]{ns12}.

\begin{defn}\label{atd1}
Let $p(\cdot)\in\cp(\rn)$,
$q\in(1,\fz]$ and
\begin{equation}\label{4.1.x}
s\in\lf(\frac{n}{p_-}-n-1,\fz\r)\cap{\zz}_+.
\end{equation}
A measurable function $a$ on $\rn$
is called a \emph{$(p(\cdot),q,s)$-atom} if there exists a ball $B$ such that
\begin{enumerate}
\item[{\rm (i)}] $\supp a \st B$;

\item[{\rm (ii)}] $\|a\|_{L^q(\rn)}\le \frac{|B|^{1/q}}{\|\chi_B\|_{\lv}}$;

\item[{\rm (iii)}] $\int_{\mathbb R^n}a(x)x^\az\,dx=0$ for all $\az\in{\zz}_+^n$
with $|\az|\le s$.
\end{enumerate}
\end{defn}

We now introduce the notion of the variable weak atomic Hardy space.

\begin{defn}\label{atd2}
Let $p(\cdot)\in C^{\log}(\rn)$, $q\in(1,\fz]$ and $s$ be
as in \eqref{4.1.x}.
The \emph{variable weak atomic Hardy space} $\wha$ is
defined as the space of all $f\in\cs'(\rn)$ which can be decomposed as
\begin{equation}\label{4.1.y}
f=\sum_{i\in\zz}\sum_{j\in\nn}\lij\aij\quad \mbox{in}\quad \cs'(\rn),
\end{equation}
where $\{\aij\}_{i\in\zz,j\in\nn}$ is a sequence of
$(p(\cdot),q,s)$-atoms, associated with balls $\{\Bij\}_{i\in\zz,\ j\in\nn}$,
satisfying that there exists a positive constant $c\in(0,1]$ such that,
for all $x\in\rn$ and $i\in\zz$,
$\sum_{j\in\nn}\chi_{c\Bij}(x)\le A$ with $A$ being a positive constant
independent of $x$ and $i$ and, for all $i\in\zz$ and $j\in\nn$,
$\lij:=\wz A 2^i\|\chi_\Bij\|_{\lv}$ with $\wz A$ being a positive constant
independent of $i$ and $j$.

Moreover, for any $f\in\wha$, define
$$\|f\|_{\wha}:=\inf\lf[{\sup_{i\in\zz}{\lf\|\lf\{\sum_{j\in\nn}
\lf[\frac{\lij\chi_\Bij}
{\|\chi_\Bij\|_{\lv}}\r]^{\underline{p}}\r\}^{1/{\underline{p}}}\r\|_{\lv}}}\r],$$
where the infimum is taken over all decompositions of $f$ as above.
\end{defn}

From the definition of $\wha$ and Remark \ref{2.5.y},
we easily deduce the following conclusion,
the details being omitted.

\begin{rem}\label{atomlm3}
Let $f\in\wha$. Then there exists a sequence $\{\aij\}_{i\in\zz,j\in\nn}$ of
$(p(\cdot),q,s)$-atoms, associated with balls $\{\Bij\}_{i\in\zz,j\in\nn}$,
satisfying that, for all $i\in\zz$ and $x\in\rn$,
$\sum_{j\in\nn}\chi_{c\Bij}(x)\le A$ with
$c\in(0,1]$ and $A$ being positive constants independent of $i$ and $x$
such that $f$ admits a decomposition as in \eqref{4.1.y}
with $\lij:=\wz A2^i\|\chi_\Bij\|_{\lv}$ for all
$i\in\zz$ and $j\in\nn$, where $\wz A$ is a positive
constant independent of $i$ and $j$, and
\begin{align}\label{atom-x1}
\|f\|_{\wha}
\sim\sup_{i\in\zz}{\lf\|\lf\{\sum_{j\in\nn}
\lf[\frac{\lij\chi_\Bij}
{\|\chi_\Bij\|_{\lv}}\r]^{\underline{p}}\r\}^{1/{\underline{p}}}
\r\|_{\lv}}
\end{align}
with the implicit equivalent positive constants independent of $f$.
Moreover, by the fact that $\sum_{j\in\nn}\chi_{c\Bij}\le A$
for all $i\in\zz$, the definition of $\{\lz_{i,j}\}_{i\in\zz,j\in\nn}$
and Remark \ref{2.5.y}, we further conclude that
\begin{align}
\|f\|_{\wha}&\sim\sup_{i\in\zz}2^i\lf\|\lf(\sum_{j\in\nn}
\chi_\Bij\r)^{1/{\underline{p}}}\r\|_{\lv}
\sim\sup_{i\in\zz}2^i\lf\|\lf(\sum_{j\in\nn}
\chi_{c\Bij}\r)^{1/{\underline{p}}}\r\|_{\lv}\noz\\
&\sim\sup_{i\in\zz}2^i\lf\|\sum_{j\in\nn}
\chi_{c\Bij}\r\|_{\lv}
\sim\sup_{i\in\zz}2^i\lf\|\sum_{j\in\nn}
\chi_{\Bij}\r\|_{\lv},\noz
\end{align}
where the implicit equivalent positive constants are independent of $f$.
\end{rem}

The main result of this section is stated as follows.

\begin{thm}\label{atthm1}
Let $p(\cdot)\in C^{\log}(\rn)$, $q\in(\max\{p_+,1\},\fz]$ with $p_+$
as in \eqref{2.1x} and $s$ be as in
\eqref{4.1.x}.
Then $\whv=\wha$ with equivalent quasi-norms.
\end{thm}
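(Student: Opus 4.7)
The plan is to establish the two continuous embeddings $\wha\hookrightarrow\whv$ and $\whv\hookrightarrow\wha$ separately; together these give both the set equality and the equivalence of quasi-norms.

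For the first embedding $\wha\hookrightarrow\whv$, I would start with a decomposition $f=\sum_{i\in\zz}\sum_{j\in\nn}\lij\aij$ of $f\in\wha$ furnished by Definition \ref{atd2}, and aim to bound $\|f_{N,+}^\ast\|_\wlv$. Given $\alpha\in(0,\fz)$, I would fix $i_0\in\zz$ with $2^{i_0}\sim\alpha$ and split $f=F_1+F_2$, where $F_1:=\sum_{i\le i_0}\sum_{j\in\nn}\lij\aij$ and $F_2:=\sum_{i>i_0}\sum_{j\in\nn}\lij\aij$, so that
$$\lf\{f_{N,+}^\ast>\alpha\r\}\subset\lf\{(F_1)_{N,+}^\ast>\alpha/2\r\}\cup\lf\{(F_2)_{N,+}^\ast>\alpha/2\r\}.$$
For the ``high'' part $F_2$, the grand maximal function of each atom is essentially concentrated in $2\Bij$, so (modulo an absorbable tail) the level set embeds into $\bigcup_{i>i_0}\bigcup_{j\in\nn}2\Bij$; then Remarks \ref{2.5.y} and \ref{atomlm3}, combined with summing the geometric series in $i>i_0$, yield $\alpha\|\chi_{\{(F_2)_{N,+}^\ast>\alpha/2\}}\|_\lv\ls\|f\|_\wha$. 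For the ``low'' part $F_1$, the moment conditions on the atoms give a pointwise bound of the grand maximal function of each $\aij$ by a suitable power of $\cm(\chi_\Bij)$ divided by $\|\chi_\Bij\|_\lv$; a Sawano-type reduction lemma then converts the required $\lv$-estimate of the resulting sum into an $L^q$-estimate, which the Fefferman--Stein vector-valued inequality (Lemma \ref{mlm1}) and Chebyshev's inequality at level $\alpha$ close out. Taking the supremum over $\alpha$ produces $\|f\|_\whv\ls\|f\|_\wha$.

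For the reverse embedding $\whv\hookrightarrow\wha$, since $\whv$ lacks a convenient dense subspace of regular functions, the plan is to follow the Calder\'on--Liang strategy and construct an atomic decomposition directly from the distribution $f$. For each $i\in\zz$ set $\Omega_i:=\{x\in\rn:\ f_{N,+}^\ast(x)>2^i\}$ and apply a Whitney covering of $\Omega_i$ to extract a family of balls $\{\Bij\}_{j\in\nn}$ satisfying $r_\Bij\sim\dist(\Bij,\Omega_i^\com)$ together with finite overlap of fixed dilates. The weak-type inequality for $f_{N,+}^\ast$ then supplies the crucial uniform bound
$$\sup_{i\in\zz}2^i\lf\|\sum_{j\in\nn}\chi_\Bij\r\|_\lv\ls\|f\|_\whv.$$
A Calder\'on--Zygmund splitting $f=g_i+\sum_{j\in\nn}b_{i,j}$ would be performed at each level $2^i$, with $b_{i,j}$ supported in a fixed dilate of $\Bij$ and endowed with vanishing moments up to order $s$; the telescoping identity
$$f=\sum_{i\in\zz}(g_{i+1}-g_i)\quad\text{in}\quad\cs'(\rn)$$
then expresses each difference $g_{i+1}-g_i$ as $\sum_{j\in\nn}\lij\aij$ with $\lij:=\wz A\, 2^i\|\chi_\Bij\|_\lv$ and each $\aij$ a $(p(\cdot),q,s)$-atom on $\Bij$. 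Combining this with the displayed summability and Remark \ref{atomlm3} yields $\|f\|_\wha\ls\|f\|_\whv$.

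The principal obstacle lies in justifying the telescoping identity in $\cs'(\rn)$: one must show that $g_i\to f$ as $i\to+\fz$ and $g_i\to 0$ as $i\to-\fz$ in the weak-$*$ topology. In the classical Hardy-space setting such convergence rests on strong-type boundedness of $\cm$ on $\lp$, whereas here only weak-type control on $\wlv$ is available, so the argument must be carried out directly by pairing against arbitrary Schwartz test functions, using the pointwise bound $|g_i|\ls 2^i$ outside $\Omega_i$ together with the tail decay afforded by the moment cancellations of $b_{i,j}$. These same estimates should also produce the atom normalization $\|\aij\|_{L^q(\rn)}\le|\Bij|^{1/q}/\|\chi_\Bij\|_\lv$ required by Definition \ref{atd1}, after which the remaining verification reduces to collecting terms and invoking Remark \ref{atomlm3}.
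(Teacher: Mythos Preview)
Your plan for the embedding $\wha\hookrightarrow\whv$ matches the paper's STEP~1 closely: the same splitting at level $i_0$, the same ``local'' set $\bigcup_{i\ge i_0}\bigcup_j 2\Bij$ for the high part, and the same tools (Lemma~\ref{atlm2} for the near piece, the pointwise decay estimate plus Lemma~\ref{mlm1} for the far piece). One minor imprecision: for the low part $F_1$ you speak only of the moment-based pointwise bound, but that bound holds only on $(2\Bij)^\com$; the piece on $2\Bij$ requires instead the $L^q$-size condition and Lemma~\ref{atlm2}. The paper handles these as two separate terms ${\rm I}_{1,1}$ and ${\rm I}_{1,2}$, and you should too.

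For the reverse embedding $\whv\hookrightarrow\wha$ your route is genuinely different from the paper's. You propose the Calder\'on--Zygmund splitting $f=g_i+\sum_j b_{i,j}$ at each dyadic level together with the telescoping identity $f=\sum_i(g_{i+1}-g_i)$. The paper instead follows Calder\'on's reproducing-formula construction (Lemma~\ref{l-1.24x}): it writes $f=\int_0^\fz\int_\rn F(y,t)\wz\psi_t(\cdot-y)\,dy\,dt/t$ in $\cs'(\rn)$, localizes the integrand to tent regions $\wqij$ over Whitney cubes of $\{M_{\triangledown}f>2^i\}$, and passes to a weak-$*$ limit to obtain $(p(\cdot),\fz,s)$-atoms directly. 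The advantage of the paper's route is precisely that it sidesteps the obstacle you correctly flag: there is no telescoping series to justify in $\cs'(\rn)$, because the identity $f=\sum_{i,j}b_{i,j}$ already comes from the reproducing formula together with a compactness argument, and the needed convergence is handled via a duality pairing against the Campanato space $\cl_{1,p(\cdot)/r,s}(\rn)$. Your telescoping approach is plausible in principle and has been carried out for related spaces, but establishing $g_i\to f$ as $i\to+\fz$ and $g_i\to0$ as $i\to-\fz$ in $\cs'(\rn)$ with only weak-type control on $f_{N,+}^\ast$ requires a careful distributional argument that you have identified but not sketched; the paper's construction avoids this altogether and yields $L^\fz$-atoms immediately.
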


To prove Theorem \ref{atthm1}, we need the following useful technical lemma,
which is a variant of \cite[Lemma 4.1]{s10} and can be proved via
combining \cite[Lemma 4.1]{s10} and Lemma \ref{mlm1},
the details being omitted.

\begin{lem}\label{atlm2}
Let $p(\cdot)\in C^{\log}(\rn)$, $r\in(0,\underline{p}]$ and
$q\in[1,\fz]\cap(p_+,\fz]$. Then there exists
a positive constant $C$ such that, for all sequences
$\{B_j\}_{j\in\nn}$ of balls, numbers $\{\lz_j\}_{j\in\nn}\subset\mathbb C$
and measurable functions $\{a_j\}_{j\in\nn}$ satisfying that,
for each $j\in\nn$, $\supp a_j\st B_j$ and $\|a_j\|_{L^q(\rn)}\le |B_j|^{1/q}$,
it holds true that
$$\lf\|\lf(\sum_{j\in\nn}|\lz_j a_j|^{r}\r)^{\frac{1}{r}}
\r\|_{\lv}\le C\lf\|\lf(\sum_{j\in\nn}|\lz_j\chi_{B_j}|^{r}\r)
^{\frac{1}{r}}\r\|_{\lv}.$$
\end{lem}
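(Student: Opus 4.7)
The plan is to reduce the asserted vector-valued inequality to a scalar-type Sawano estimate on $L^{p(\cdot)/r}(\rn)$ and then invoke \cite[Lemma 4.1]{s10}, whose proof uses the Fefferman-Stein vector-valued maximal inequality (Lemma \ref{mlm1}).

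By Remark \ref{r-vlp}(i), one has $\|\cdot\|_{\lv}=\|(\cdot)^r\|_{L^{p(\cdot)/r}(\rn)}^{1/r}$ for non-negative functions. Combined with the trivial identity $\chi_{B_j}^r=\chi_{B_j}$, raising the target estimate to the $r$-th power transforms it into the equivalent statement
\begin{equation*}
\lf\|\sum_{j\in\nn}|\lz_j|^r|a_j|^r\r\|_{L^{p(\cdot)/r}(\rn)}
\le C\lf\|\sum_{j\in\nn}|\lz_j|^r\chi_{B_j}\r\|_{L^{p(\cdot)/r}(\rn)}.
\end{equation*}
Setting $P(\cdot):=p(\cdot)/r$, $Q:=q/r$, $A_j:=|a_j|^r$ and $\mu_j:=|\lz_j|^r$, I verify the hypotheses of Sawano's lemma: $P(\cdot)\in C^{\log}(\rn)$ (inherited from $p(\cdot)$); $P_-=p_-/r\ge\underline{p}/r\ge1$ since $r\in(0,\underline{p}]$; $Q=q/r>p_+/r=P_+$ since $q>p_+$; and each $A_j$ is supported in $B_j$ with $\|A_j\|_{L^Q(\rn)}=\|a_j\|_{L^q(\rn)}^r\le|B_j|^{r/q}=|B_j|^{1/Q}$.

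Once these hypotheses are verified, I apply \cite[Lemma 4.1]{s10} to $(P(\cdot),Q,\{A_j\},\{\mu_j\})$ to obtain the reduced inequality, and then raise both sides to the power $1/r$ to recover Lemma \ref{atlm2}. The main point of care is the borderline subcase $r=\underline{p}$ with $\underline{p}=p_-\le 1$, in which $P_-=1$; there Lemma \ref{mlm1} is not directly applicable to $L^{P(\cdot)}$ itself, and the argument in \cite{s10} instead selects an auxiliary exponent $u\in(1,Q)$ chosen small enough that $P(\cdot)/u$ still has lower bound strictly above $1$, so that Lemma \ref{mlm1} can be used on $L^{P(\cdot)/u}(\rn)$ to turn each $\cm(A_j)$-type term back into something controlled by $\chi_{B_j}$. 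Since $Q>P_+\ge P_-\ge 1$ there is always room to make such a selection of $u$, and aside from this bookkeeping the argument reduces to a direct citation. The rest of the proof is the omitted routine detail.
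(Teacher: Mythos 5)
Your reduction is correct and matches the paper's own indication of the proof: raising to the $r$-th power and using Remark \ref{r-vlp}(i) recasts the claim as $\|\sum_j|\lz_j|^r|a_j|^r\|_{L^{p(\cdot)/r}(\rn)}\ls\|\sum_j|\lz_j|^r\chi_{B_j}\|_{L^{p(\cdot)/r}(\rn)}$, which is precisely Sawano's \cite[Lemma 4.1]{s10} applied to the data $(p(\cdot)/r,\ q/r,\ |a_j|^r,\ |\lz_j|^r)$, and the paper likewise reduces the proof to that lemma combined with Lemma \ref{mlm1}. One small inaccuracy in your closing paragraph: when $P_-=1$ there is no $u\in(1,Q)$ with $(P(\cdot)/u)_->1$, since dividing by $u>1$ pushes the lower bound below $1$, so the auxiliary-exponent selection you describe is not available in the borderline subcase; since Sawano's Lemma 4.1 holds for all $0<P_-\le P_+<Q\le\fz$ with $Q\ge 1$, this does not affect the validity of your citation, but the bookkeeping you sketch is not how that case is actually resolved.
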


In what follows, we use $\vec0_n$ to denote the origin of $\rn$
and, for any $\varphi\in\cs(\rn)$,
we use $\widehat{\varphi}$ to denote its
\emph{Fourier transform},
which is defined by setting, for all $\xi\in\rn$,
$$\widehat{\varphi}(\xi):=\int_{\rn}e^{-2\pi ix\xi}\varphi(x)\,dx.$$

We also need the following Calder\'on reproducing formula,
which was obtained by Calder\'on \cite[p.\,219]{c77}
(see also \cite[Lemma 4.1]{ct75}).

\begin{lem}\label{l-1.24x}
Let $\psi\in\cs(\rn)$ be such that $\supp \psi\st B(\vec0_n,1)$
and $\int_\rn\psi(x)\,dx=0$.
Then there exists a
function $\phi\in\cs(\rn)$ such that $\widehat{\phi}$
has compact support away from the origin and,
for all $x\in\rn\setminus\{\vec0_n\}$,
$$\int_0^\fz \widehat{\psi}(tx)\widehat{\phi}(tx)\,\frac{dt}t=1.$$
\end{lem}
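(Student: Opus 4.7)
The plan is to carry out the standard Calder\'on reproducing construction: first pick a preliminary $\phi_0\in\cs(\rn)$ whose Fourier transform is smooth and compactly supported away from the origin, then divide out by a degree-zero homogeneous factor to obtain $\phi$. The key algebraic observation is that, for any such $\phi_0$, the substitution $s=t|x|$ shows that
$$I(x):=\int_0^\fz \widehat\psi(tx)\widehat{\phi_0}(tx)\,\frac{dt}{t}$$
is homogeneous of degree zero in $x\in\rn\setminus\{\vec0_n\}$, so $I(x)=I(x/|x|)$ depends only on the direction $x/|x|\in S^{n-1}$.

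First, I would choose $\eta\in C_c^\fz((0,\fz))$ nonnegative, supported in $[1/2,2]$, and not identically zero, and set $\widehat{\phi_0}(\xi):=\overline{\widehat\psi(\xi)}\,\eta(|\xi|)$. Since $\psi$ has compact support, $\widehat\psi$ extends to an entire function on $\cc^n$; in particular $\widehat\psi\in C^\fz(\rn)$ and $\widehat\psi(\vec0_n)=\int_\rn\psi(x)\,dx=0$. Hence $\widehat{\phi_0}\in C_c^\fz(\rn\setminus\{\vec0_n\})$ and $\phi_0\in\cs(\rn)$, and for every $\omega\in S^{n-1}$,
$$I(\omega)=\int_0^\fz\lf|\widehat\psi(t\omega)\r|^2\eta(t)\,\frac{dt}{t}\ge 0.$$
Smoothness of $I$ on $S^{n-1}$ follows because the integrand is compactly supported in $t$ and depends smoothly on $\omega$.

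The main step, and the principal obstacle, is to argue that $I(\omega)>0$ for every $\omega\in S^{n-1}$, so that $I$ is bounded below by a positive constant on the compact set $S^{n-1}$. For each fixed $\omega$, the map $t\mapsto\widehat\psi(t\omega)$ is the restriction of an entire function to a ray, hence real-analytic in $t$; consequently, either it is identically zero on $\rr$ (a degenerate situation excluded by the Tauberian-type assumption implicit in the applications of this lemma) or its zero set is discrete, in which case $|\widehat\psi(t\omega)|^2\eta(t)$ is a nonnegative continuous function on $[1/2,2]$ that is not identically zero, forcing $I(\omega)>0$. Granted this, $1/I(\cdot/|\cdot|)$ is smooth and bounded on $\rn\setminus\{\vec0_n\}$.

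Finally, I would define
$$\widehat\phi(\xi):=\frac{\widehat{\phi_0}(\xi)}{I(\xi/|\xi|)}\quad\text{for }\xi\ne\vec0_n,\qquad\widehat\phi(\vec0_n):=0.$$
Since $1/I(\xi/|\xi|)$ is smooth on $\rn\setminus\{\vec0_n\}$ and $\widehat{\phi_0}$ is supported in the annulus $\{1/2\le|\xi|\le 2\}$, the product $\widehat\phi$ lies in $C_c^\fz(\rn)$ with support in the same annulus; in particular $\phi\in\cs(\rn)$ and $\widehat\phi$ has compact support away from the origin. Using the degree-zero homogeneity of $I$,
$$\int_0^\fz\widehat\psi(tx)\widehat\phi(tx)\,\frac{dt}{t}=\frac{1}{I(x/|x|)}\int_0^\fz\widehat\psi(tx)\widehat{\phi_0}(tx)\,\frac{dt}{t}=\frac{I(x)}{I(x/|x|)}=1$$
for every $x\in\rn\setminus\{\vec0_n\}$, which completes the proof.
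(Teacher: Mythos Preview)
The paper does not supply its own proof of this lemma; it merely cites Calder\'on \cite[p.\,219]{c77} and \cite[Lemma 4.1]{ct75}. Your construction is precisely the standard Calder\'on argument from those references: take a preliminary $\phi_0$ with $\widehat{\phi_0}=\overline{\widehat\psi}\,\eta(|\cdot|)$, observe that $I(x)=\int_0^\fz\widehat\psi(tx)\widehat{\phi_0}(tx)\,\frac{dt}{t}$ is homogeneous of degree zero, and then normalize by $I(\xi/|\xi|)$. So in approach there is nothing to compare.

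You are also right to flag the positivity of $I$ on $S^{n-1}$ as the real obstruction, and your caution is warranted: the lemma is literally false under the hypotheses as stated. In $\rr^2$, take $\psi(x_1,x_2)=\psi_1(x_1)\psi_2(x_2)$ with $\psi_1,\psi_2\in C_c^\fz((-1/2,1/2))$, $\psi_2\not\equiv0$, and $\int_\rr\psi_1=0$; then $\supp\psi\subset B(\vec0_2,1)$ and $\int_{\rr^2}\psi=0$, but $\widehat\psi(0,\xi_2)=\widehat{\psi_1}(0)\,\widehat{\psi_2}(\xi_2)=0$ for every $\xi_2$, so for the direction $\omega=(0,1)$ one has $\int_0^\fz\widehat\psi(t\omega)\widehat\phi(t\omega)\,\frac{dt}{t}=0$ regardless of $\phi$. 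What Calder\'on's original statement actually carries (and what the paper's use of the lemma in STEP~2 of the proof of Theorem~\ref{atthm1} tacitly relies on, since $\psi$ is chosen freely there) is a Tauberian-type non-degeneracy condition, e.g., that $\widehat\psi$ does not vanish identically on any ray through the origin. This is automatic when $\psi$ is radial and nonzero. Under that extra hypothesis your real-analyticity argument gives $I(\omega)>0$ for each $\omega\in S^{n-1}$, continuity of $I$ and compactness of $S^{n-1}$ give a uniform positive lower bound, and the remainder of your proof is correct as written.
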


Recall that, for any $d\in\zz_+$, $p(\cdot)\in\cp(\rn)$,
a locally integrable function $f$ on $\rn$ is
said to belong to the \emph{Campanato space}
$\cl_{1,p(\cdot),d}(\rn)$ if
$$\|f\|_{\cl_{1,p(\cdot),d}(\rn)}
:=\sup_{Q\subset\rn}\frac{1}{\|\chi_Q\|_{\lv}}
\int_{Q}|f(x)-P_Q^df(x)|\,dx<\fz,$$
where the supremum is taken over all cubes $Q$ of $\rn$ and
$P_Q^d$ denotes the \emph{unique polynomial} $P$ having degree at most $d$
and satisfies that, for any polynomial $R$ on $\rn$ with order at most
$d$, $\int_Q[f(x)-P(x)]R(x)\,dx=0$ (see \cite[Definition 6.1]{ns12}).

Now we turn to the proof of Theorem \ref{atthm1}.
For $N\in(\frac{n}{\underline{p}}+n+1,\fz)$
and $h\in\cs'(\rn)$, we denote $h_{N,+}^\ast$ simply by $h^\ast$.

\begin{proof}[Proof of Theorem \ref{atthm1}]
{\bf STEP 1:} In this step, we show that $\wha\st\whv$.

Let $f\in\wha$. Then, by Remark \ref{atomlm3}, we know that there exist
a sequence $\{\aij\}_{i\in\zz,j\in\nn}$ of $(p(\cdot),q,s)$-atoms,
associated with balls $\{\Bij\}_{i\in\zz,j\in\nn}$,
and $\{\lz_{i,j}\}_{i\in\zz,j\in\nn}\st\cc$ such that
\eqref{4.1.y} holds true in $\cs'(\rn)$ and
\begin{align}\label{atom-c1}
\|f\|_{\wha}\sim\sup_{i\in\zz}2^i\lf\|\sum_{j\in\nn}\chi_\Bij\r\|_{\lv}.
\end{align}
To prove $f\in\whv$, by the definition of $\whv$, it suffices to show that
$$\sup_{\az\in(0,\fz)}\az\lf\|\chi_{\{x\in\rn:\ |f^*(x)|>\az\}}
\r\|_{\lv}\ls\|f\|_{\wha}.$$
For any given $\az\in(0,\fz)$, we choose $i_0\in\zz$ such that
$2^{i_0}\le\az<2^{i_0+1}$ and write
$$f=\sum_{i=-\fz}^{i_0-1}\sum_{j\in\nn}\lij\aij
+\sum_{i=i_0}^{\fz}\sum_{j\in\nn}\lij\aij=:f_1+f_2.$$
Thus, by Remark \ref{r-vlp}(i), we find that
\begin{align}\label{eqatomsum}
&\lf\|\chi_{\{x\in\rn:\ f^*(x)>\az\}}\r\|_{\lv}\noz\\
&\hs\ls\lf\|\chi_{\{x\in\rn:\ f_1^*(x)
>\frac{\az}{2}\}}\r\|_{\lv}
+\lf\|\chi_{\{x\in A_{i_0}:\ f_2^*(x)
>\frac{\az}{2}\}}\r\|_{\lv}\noz\\
&\hs\hs\hs\hs+\lf\|\chi_{\{x\in(A_{i_0})^{\com}:\ f_2^*(x)
>\frac{\az}{2}\}}\r\|_{\lv}\noz\\
&\hs=:{\rm I}_1+{\rm I}_2+{\rm I}_3,
\end{align}
where $A_{i_0}:=\bigcup_{i={i_0}}^\fz\bigcup_{j\in\nn}(2\Bij)$.

For ${\rm I}_1$, it is easy to see that
\begin{align}\label{eqatomi1}
{\rm I}_1&\ls\lf\|\chi_{\{x\in\rn:\ \sum_{i=-\fz}
^{i_0-1}\sum_{j\in\nn}\lij(\aij)^{\ast}(x)\chi_{2\Bij}(x)
>\frac{\az}{4}\}}\r\|_{\lv}\noz\\
&\hs\hs\hs+\lf\|\chi_{\{x\in\rn:\ \sum_{i=-\fz}^{i_0-1}
\sum_{j\in\nn}\lij(\aij)^{\ast}(x)\chi_{(2\Bij)^\com}(x)
>\frac{\az}{4}\}}\r\|_{\lv}\noz\\
&=:{\rm I}_{1,1}+{\rm I}_{1,2}.
\end{align}

To estimate ${\rm I}_{1,1}$, for any $b\in(0,\underline{p})$,
let $\q1\in(1,\min\{\frac{q}{\max\{p_+,1\}},\frac{1}{b}\})$ and
$a\in(0,1-\frac1{\q1})$.
Then, by the H\"older inequality, we find that, for all $x\in\rn$,
\begin{align*}
&\sum_{i=-\fz}^{i_0-1}\sum_{j\in\nn}\lij(\aij)
^{\ast}(x)\chi_{2\Bij}(x)\\
&\hs\le \lf(\sum_{i=-\fz}^{i_0-1}2^{ia{\q1}'}\r)
^{1/{\q1}'}\lf\{\sum_{i=-\fz}^{i_0-1}2^{-ia{\q1}}
\lf[\sum_{j\in\nn}\lij(\aij)^{\ast}(x)\chi_{2\Bij}(x)\r]
^{\q1}\r\}^{1/{\q1}}\\
&\hs=\frac{2^{i_0a}}{(2^{a{\q1}'}-1)^{1/{\q1}'}}
\lf\{\sum_{i=-\fz}^{i_0-1}2^{-ia{\q1}}
\lf[\sum_{j\in\nn}\lij(\aij)^{\ast}(x)\chi_{2\Bij}(x)\r]
^{\q1}\r\}^{1/{\q1}},
\end{align*}
where ${\q1}'$ denotes the conjugate exponent of $\q1$, namely,
$\frac{1}{\q1}+\frac{1}{{\q1}'}=1$.
From this, the facts that $\q1b<1$ and $f^*(x)\ls\cm f(x)$
for all $x\in\rn$, Remark \ref{r-vlp}(i) and \cite[Theorem 2.61]{cfbook},
we deduce that
\begin{align*}
{\rm I}_{1,1}
&\le\lf\|\chi_{\{x\in\rn:\ \frac{2^{i_0a}}
{(2^{a{\q1}'}-1)^{1/{\q1}'}}
[\sum_{i=-\fz}^{i_0-1}2^{-ia{\q1}}
\{\sum_{j\in\nn}\lij(\aij)^{\ast}(x)
\chi_{2\Bij}(x)\}^{\q1}]^{1/{\q1}}>
2^{i_0-2}\}}\r\|_{\lv}\\
&\ls 2^{-i_0\q1(1-a)}\lf
\|\sum_{i=-\fz}^{i_0-1}2^{-ia{\q1}}\lf
[\sum_{j\in\nn}\lij(\aij)^{\ast}\chi_{2\Bij}\r]
^{\q1}\r\|_{\lv}\\
&\ls 2^{-i_0\q1(1-a)}\lf\|\sum_{i=-\fz}^{i_0-1}
2^{(1-a)i{\q1}b}\sum_{j\in\nn}\lf
[\|\chi_{\Bij}\|_{\lv}\cm(\aij)\chi_{2\Bij}\r]
^{\q1b}\r\|
^{\frac{1}{b}}_{L^{\frac{p(\cdot)}
b}(\rn)}\\
&\ls 2^{-i_0\q1(1-a)}\Bigg[\sum_{i=-\fz}
^{i_0-1}2^{(1-a)i{\q1}b}\\
&\quad\quad\quad\times\lf.\lf\|\lf\{\sum_{j\in\nn}
\lf[\|\chi_{\Bij}\|_{\lv}\cm(\aij)\chi_{2\Bij}\r]^
{\q1b}\r\}^{\frac{1}{b}}\r\|^{b}_{\lv}\r]^{\frac{1}{b}}.
\end{align*}
Now let $r:=\frac{q}{\q1}$. Then $r\in(1,\fz)$ and, by
the boundedness of $\cm$ on $L^r(\rn)$, we find that,
for all $i\in\zz$ and $j\in\nn$,
\begin{align*}
\lf\|\lf[\|\chi_{\Bij}\|_{\lv}\cm(\aij)\chi_{2\Bij}\r]^{\q1}\r\|_{L^r(\rn)}
&\ls\|\chi_{\Bij}\|_{\lv}^{\q1}\lf\|\cm(\aij)\chi_{2\Bij}\r\|^{\q1}_{L^q(\rn)}\noz\\
&\ls |\Bij|^{\frac{1}{r}}.
\end{align*}
Therefore, by Lemma \ref{atlm2}, Remark \ref{2.5.y} and \eqref{atom-c1},
we conclude that
\begin{align*}
{\rm I}_{1,1}&\ls 2^{-i_0\q1(1-a)}\lf
[\sum_{i=-\fz}^{i_0-1}2^{(1-a)i{\q1}b}
\lf\|\lf(\sum_{j\in\nn}\chi_{2\Bij}\r)
^{\frac{1}{b}}\r\|
^{b}_{\lv}\r]^
{\frac{1}{b}}\\
&\ls 2^{-i_0\q1(1-a)}\lf
[\sum_{i=-\fz}^{i_0-1}2^{(1-a)i{\q1}b}
\lf\|\lf(\sum_{j\in\nn}\chi_{c\Bij}\r)
^{\frac{1}{b}}\r\|
^{b}_{\lv}\r]^
{\frac{1}{b}}\\
&\ls 2^{-i_0\q1(1-a)}
\lf\{\sum_{i=-\fz}^{i_0-1}2^{[(1-a){\q1}-1]ib}
\r\}^{\frac{1}{b}}\sup_{i\in\zz}2^i\lf\|
\sum_{j\in\nn}\chi_{\Bij}\r\|_{\lv}
\ls\az^{-1}\|f\|_{\wha},
\end{align*}
which implies that
\begin{align}\label{eqatomi11}
\az{\rm I}_{1,1}\ls\|f\|_{\wha}.
\end{align}

To deal with ${\rm I}_{1,2}$, we need some estimates on $(\aij)^*$.
Let $\phi\in\cf_N(\rn)$ and, for any $i\in\zz$ and $j\in\nn$,
$B_{i,j}:=B(x_{i,j},r_{i,j})$ with some $x_{i,j}\in\rn$
and $r_{i,j}\in(0,\fz)$.
Then, from the vanishing moment condition of $\aij$, the Taylor remainder theorem and
the H\"older inequality,
we deduce that, for any $i\in\zz\cap[i_0,\fz)$, $j\in\nn$, $t\in(0,\fz)$ and
$x\in (2\Bij)^\com$,
\begin{align}\label{11.14.x2}
\hs\hs\lf|\aij * \phi_t(x)\r|
&=\lf|\int_\Bij \aij(y)
\lf[\phi\lf(\frac{x-y}{t}\r)
-\sum_{|\beta|\le s}\frac{D^\beta\phi
(\frac{x-\xij}{t})}{\beta!}
\lf(\frac{\xij-y}{t}\r)^\beta\r]\,\frac{dy}{t^n}\r|\noz\\
&\ls\int_\Bij|\aij(y)|\frac{|y-\xij|^{s+1}}
{|x-\xij|^{n+s+1}}\,dy\noz\\
&\ls\frac{(r_{i,j})^{s+1}}{|x-\xij|^{n+s+1}}
\lf[\int_\Bij|\aij(y)|^q\,dy\r]^{1/q}
\lf(\int_\Bij\,1\,dy\r)^{1/q'}\noz\\
&\ls\|\chi_{\Bij}\|_{\lv}^{-1}
\lf(\frac{\rij}{|x-\xij|}\r)^{n+s+1},
\end{align}
which implies that, for any $x\in (2\Bij)^\com$,
\begin{align}\label{eatomst}
(\aij)^*(x)&\ls\|\chi_{\Bij}\|_{\lv}^{-1}
\lf(\frac{\rij}{|x-\xij|}\r)^{n+s+1}\noz\\
&\ls\|\chi_{\Bij}\|_{\lv}^{-1}
\lf[\cm\lf(\chi_{\Bij}\r)(x)\r]^{\frac{n+s+1}{n}}.
\end{align}
By this, the H\"older inequality, Remark \ref{r-vlp}(i), Lemma \ref{mlm1} and
\eqref{atom-c1}, we find that, for any $b\in(0,\frac{n}{n+s+1})$,
$\Q1\in(\frac{n}{[n+s+1]b}, \frac{1}{b})$ and $a\in(0,1-\frac1{\Q1})$,
\begin{align*}
{\rm I}_{1,2}
&\le \lf\|
\chi_{\{x\in\rn:\ \frac{2^{i_0a}}{(2^{a{\Q1}'}-1)^{1/{\Q1}'}}
\{\sum_{i=-\fz}^{i_0-1}2^{-ia{\Q1}}
[\sum_{j\in\nn}\lij(\aij)^{\ast}(x)
\chi_{({2\Bij})^\com}(x)]^{\Q1}\}^
{1/{\Q1}}>2^{i_0-2}\}}\r\|_{\lv}\\
&\ls 2^{-i_0\Q1(1-a)}\lf
\|\sum_{i=-\fz}^{i_0-1}2^{-ia{\Q1}}\lf
[\sum_{j\in\nn}\lij(\aij)^{\ast}
\chi_{({2\Bij})^\com}\r]^{\Q1}\r\|_{\lv}\\
&\ls 2^{-i_0\Q1(1-a)}\lf\{
\sum_{i=-\fz}^{i_0-1}2^{(1-a)i{\Q1}b}
\lf\|\sum_{j\in\nn}\lf[\cm\lf(\chi_{\Bij}\r)\r]^
{\frac{(n+s+1)\Q1 b}{n}}\r\|_{L^{\frac{p(\cdot)}
{b}}(\rn)}\r\}^{\frac{1}{b}}\\
&\ls 2^{-i_0\Q1(1-a)}\lf
[\sum_{i=-\fz}^{i_0-1}2^{(1-a)i{\Q1}b}
\lf\|\sum_{j\in\nn}\chi_{\Bij}\r\|_
{L^{\frac{p(\cdot)}{b}}(\rn)}\r]^{\frac{1}{b}}\\
&\ls 2^{-i_0\Q1(1-a)}\lf\{\sum_{i=-\fz}
^{i_0-1}2^{[(1-a){\Q1}-1]ib}
2^{i{b}}\lf\|\lf(\sum_{j\in\nn}
\chi_{c\Bij}\r)^{\frac{1}{b}}\r\|_
{\lv}^{b}\r\}^{\frac{1}{b}}\\
&\ls\az^{-1}\sup_{i\in\zz}2^i\lf\|\sum_{j\in\nn}
\chi_{\Bij}\r\|_{\lv}\sim\az^{-1}\|f\|_{\wha},
\end{align*}
that is,
\begin{align}\label{eqatomi12}
\az{\rm I}_{1,2}\ls\|f\|_{\wha}.
\end{align}
By this, combined with \eqref{eqatomi1} and \eqref{eqatomi11}, we conclude that
\begin{align}\label{eqatomi}
\az{\rm I}_1\ls\|f\|_{\wha}.
\end{align}

For ${\rm I}_2$, we choose $r_1\in[\frac{1}{\underline{p}},\fz)$.
Then, by Remark \ref{2.5.y} and \eqref{atom-c1}, we conclude that
\begin{align*}
{\rm I}_2&\le\|\chi_{A_{i_0}}\|_{L^{p(\cdot)}(\rn)}
\le\lf\|\sum_{i={i_0}}^\fz\sum_{j\in\nn}
\chi_{2\Bij}\r\|_{L^{p(\cdot)}(\rn)}
\ls\lf\|\sum_{i={i_0}}^\fz\sum_{j\in\nn}
\chi_{\Bij}\r\|_{L^{p(\cdot)}(\rn)}\\
&\ls\lf[\sum_{i={i_0}}^\fz\lf\|\sum_{j\in\nn}\chi_{\Bij}
\r\|_{L^{p(\cdot)}(\rn)}^{\frac{1}{r_1}}\r]^{r_1}
\sim\lf\{\sum_{i={i_0}}^\fz2^{-\frac{i}{r_1}}
\lf[2^i\lf\|\sum_{j\in\nn}\chi_{\Bij}
\r\|_{L^{p(\cdot)}(\rn)}\r]^{\frac{1}{r_1}}\r\}^{r_1}\\
&\ls\sup_{i\in\zz}2^i\lf\|\sum_{j\in\nn}\chi_\Bij
\r\|_{\lv}\lf(\sum_{i={i_0}}^\fz
2^{-\frac{i}{r_1}}\r)^{r_1}
\ls\az^{-1}\|f\|_{\wha},
\end{align*}
which implies that
\begin{align}\label{eqatomii}
\az {\rm I}_2\ls\|f\|_{\wha}.
\end{align}

For ${\rm I}_3$, since $\underline{p}\in(\frac{n}{n+s+1},1]$, it follows that
there exists $r_2\in(0,\fz)$ such that
$r_2\in(\frac{n}{\underline{p}(n+s+1)},1)$.
By this, the value of $\lz_{i,j}$, Lemma \ref{mlm1}, \eqref{eatomst} and \eqref{atom-c1}, we find that
\begin{align*}
{\rm I}_3&\le\lf\|\chi_{\{x\in (A_{i_0})
^\complement:\ \sum_{i=i_0}^\fz\sum_{j\in\nn}
\lij(\aij)^*(x)>\az/2\}}\r\|_{\lv}\\
&\ls\az^{-r_2}\lf\|\sum_{i=i_0}
^\fz\sum_{j\in\nn}[\lij(\aij)^*]^{r_2}\chi_{(A_{i_0})^\com}
\r\|_{\lv}\noz\\
&\ls\az^{-r_2}\lf[\sum_{i=i_0}^
\fz\lf\|\lf\{\sum_{j\in\nn}[\lij(\aij)^*]^{r_2}\chi_{(A_{i_0})^\com}\r\}^
{\frac{n}{r_2(n+s+1)}}\r\|_{L^{\frac{r_2(n+s+1)}{n}p(\cdot)}(\rn)}\r]
^{\frac{r_2(n+s+1)}{n}}\\
&\ls\az^{-r_2}\lf\{\sum_{i=i_0}^\fz
2^{\frac{in}{n+s+1}}\lf\|\sum_{j\in\nn}\lf[\cm\lf(\chi_{\Bij}\r)\r]
^{\frac{r_2(n+s+1)}{n}}
\r\|_{L^{p(\cdot)}(\rn)}^{{\frac{n}{r_2(n+s+1)}}}\r\}
^{\frac{r_2(n+s+1)}{n}}\\
&\ls\az^{-r_2}\lf[\sum_{i=i_0}^\fz2^{\frac{in}{n+s+1}}\lf
\|\sum_{j\in\nn}\chi_{\Bij}\r\|
_{\lv}^\frac{n}{r_2(n+s+1)}\r]^{\frac{r_2(n+s+1)}{n}}\noz\\
&\ls\sup_{i\in\zz}2^i\lf\|\sum_{j\in\nn}\chi_\Bij
\r\|_{\lv}\az^{-r_2}\lf[\sum_{i=i_0}^\fz2^{\frac{in}{n+s+1}}
2^{-\frac{in}{(n+s+1)r_2}}\r]
^{\frac{r_2(n+s+1)}{n}}\noz\\
&\ls\az^{-1}\|f\|_{\wha},
\end{align*}
namely,
\begin{align}\label{eqatomiii}
\az {\rm I}_3\ls\|f\|_{\wha}.
\end{align}

Combining with \eqref{eqatomsum}, \eqref{eqatomi}, \eqref{eqatomii} and
\eqref{eqatomiii},
we obtain
\begin{align*}
\|f\|_{\whv}
&=\sup_{\az\in(0,\fz)}
\az\lf\|\chi_{\{x\in \rn:\ f^*(x)>\az\}}\r\|_{\lv}\\
&\ls\sup_{\az\in(0,\fz)}\az({\rm I}_1
+{\rm I}_2+{\rm I}_3)
\ls\|f\|_{\wha},
\end{align*}
which implies that $f\in\whv$. This finishes the proof of STEP 1.

{\bf STEP 2:} In this step, we prove that $\whv\st\wha$.

To complete the proof of STEP 2, it suffices to show $\whv\st\whz$,
since, due to the obvious fact that each $(p(\cdot),\fz,s)$-atom
is also a $(p(\cdot),q,s)$-atom for any $q\in(1,\fz)$, $\whz\st\wha$.

Let $\psi\in\cs(\rn)$ be such that $\supp \psi\st B(\vec0_n,1)$,
$\int_\rn\psi(x)x^\gz\,dx=0$ for all $\gz\in\zz_+^n$ with
$|\gz|\le s$.
Then, by Lemma \ref{l-1.24x}, we know that
there exists $\phi\in\cs(\rn)$ such that $\supp\wh \phi$ has compact support
away from the origin and,
for all $x\in\rn\setminus\{\vec0_n\}$,
$$\int_0^\fz\wh\psi(tx)\wh\phi(tx)\dt=1.$$
Define a function $\eta$ on $\rn$ by setting, for all $x\in\rn\setminus\{\vec0_n\}$,
$$\wh\eta(x):=\int_1^\fz\wh\psi(tx)\wh\phi(tx)\dt$$
and $\wh\eta(\vec0_n):=1$. Then, by \cite[p.\,219]{c77}, we find that $\eta$
is infinitely differentiable, has compact support and equals 1 near the origin.

Let $x_0:=(\overbrace{2,\,...\,,2}^{n\,{\rm times}})\in\rn$ and $f\in\whv$.
Following \cite{c77}, for all
$x\in\rn$ and $t\in(0,\fz)$, let $\wz \phi(x):=\phi(x-x_0)$,
$\wz \psi(x):=\psi(x+x_0)$,
$$
F(x,t):=f*\wz\phi_t(x)\quad \mbox{and}\quad G(x,t):=f*\eta_t(x).
$$
Then, by \cite[p.\,220]{c77}, we have
$$f(\cdot)=\int_0^{\fz}\int_{\rn}F(y,t)\wz \psi(\cdot-y)\frac{dy\,dt}{t}\quad  \mbox{in}\quad \cs'(\rn).$$
For all $x\in\rn$, let
$$M_{\triangledown} (f)(x):=\sup_{t\in(0,\fz),\,|y-x|\le
3(|x_0|+1)t}[|F(y,t)|+|G(y,t)|].$$
Then $M_{\triangledown} (f)$ is lower semi-continuous and,
due to Corollary \ref{1.29.x1}, $M_{\triangledown} (f)\in\wlv$;
moreover,
\begin{align}\label{maxeq}
\|M_{\triangledown} (f)\|_{\wlv}\sim\|f\|_{\whv}.
\end{align}
For all $i\in\zz$, let $\boz_i:=\{x\in\rn:\ M_{\triangledown} f(x)>2^i\}$.
Then $\boz_i$ is open. By \eqref{wvlp} and \eqref{maxeq}, we further find that
\begin{align}\label{9.16.x}
\sup_{i\in\zz}2^i\|\chi_{\boz_i}\|_{\lv}
\le\|M_{\triangledown} (f)\|_{\wlv}\ls\|f\|_{\whv}.
\end{align}
Since $\boz_i$ is a proper open subset of $\rn$,
by the Whitney decomposition (see, for example, \cite[p.\,463]{g09-1}),
we know that there exists a sequence $\{\qij\}_{j\in\nn}$ of cubes
such that, for all $i\in\zz$,
\begin{enumerate}
\item[{\rm (i)}] $\bigcup_{j\in\nn} \qij = \boz_i$ and
$\{\qij\}_{j\in\nn}$ have disjoint interiors;

\item[{\rm (ii)}] for all $j\in\nn$, $\sqrt{n}\,l_{\qij}
\le \dist(\qij,\, \boz_i^{\complement}) \le 4 \sqrt{n}\,l_{\qij }$,
where $l_{\qij }$ denotes the length of the cube $\qij$ and
$\dist(\qij,\, \boz_i^{\complement}):=\inf\{|x-y|:\ x\in\qij,\ y\in \boz_i^{\complement}\}$;

\item[{\rm (iii)}] for any $j,\, k\in\nn$, if the boundaries of
two cubes $\qij$ and $Q_{i,k}$ touch, then
$\frac14\le\frac{l_{\qij}}{ l_{Q_{i,k}}}\le 4;$

\item[{\rm (iv)}] for a given $j\in\nn$, there exist at most
$12 n$ different cubes $Q_{i,k}$ that touch $\qij$.
\end{enumerate}

Now, for any $\epsilon\in(0,\fz)$, $i\in\zz$, $j\in\nn$ and $x\in\rn$, let
$$\dist\lf(x,\boz_i^\com\r):=\inf\lf\{|x-y|:\ y\in \boz_i^\com\r\},$$
$$\wz\boz_i:=\lf\{(x,t)\in\rr^{n+1}_+:\ 0<2t(|x_0|+1)<\dist(x, \boz_i^\com)\r\},$$
$$\wqij:=\lf\{(x,t)\in\rr_+^{n+1}
:\ x\in\qij,\ (x,t)\in\wz\boz_i\bh\wz\boz_{i+1}\r\}$$
and
$$\bije(x):=\int_{\epsilon}^{\fz}\int_\rn\chi_{\wqij}(y,t) F(y,t)
\wz\psi_t(x-y)\frac{dy\,dt}{t}.$$
Then, by an argument similar to that used in \cite[pp.\,221-222]{c77}
(see also \cite[p.\,650]{lyj}),
we conclude that there exist positive constants $C_1$ and $C_2$
such that, for all $\ez\in(0,\fz)$, $i\in\zz$ and $j\in\nn$,
$\supp \bije\st C_1\qij$, $\|\bije\|_{L^\fz(\rn)}\le C_22^i$,
$\int_\rn\bije(x)x^\gz\,dx=0$ for all $\gz\in\zz_+^n$ satisfying
$|\gz|\le s$ and
$$f=\lim_{\ez\to0}\sum_{i\in\zz}\sum_{j\in\nn}\bije\ \mbox{ in }\ \cs'(\rn).$$
Moreover, by the argument used in \cite[p.\,650]{lyj}, we find that
there exist $\{\bij\}_{i\in\zz,j\in\nn}\st L^{\fz}(\rn)$ and
a sequence $\{\ez_k\}_{k\in\nn}\st(0,\fz)$ such that
$\ez_k\to 0$ as $k\to \fz$ and,
for any $i\in\zz$, $j\in\nn$ and $g\in L^1(\rn)$,
\begin{equation}\label{10.7.x4}
\lim_{k\to\fz}\la b_{i,j}^{\ez_k},g\ra = \la \bij,g\ra,
\end{equation}
$\supp \bij\st C_1\qij$, $\|\bij\|_{L^{\fz}(\rn)}\le  C_22^i$ and,
for all $\gz\in\zz_+^n$ with $|\gz|\le s$,
$$\int_\rn\bij(x)x^\gz\,dx
=\la b_{i,j}, x^\gz\chi_{C_1\qij}\ra
=\lim_{k\to\fz}\int_\rn b_{i,j}^{\ez_k}(x)x^\gz\,dx=0.$$

Next we show that
\begin{align}\label{10.7.x1}
\lim_{k\to\fz}\sum_{i\in\zz}\sum_{j\in\nn} b_{i,j}^{\ez_k}
=\sum_{i\in\zz}\sum_{j\in\nn} b_{i,j} \ \mbox{ in }\ \cs'(\rn).
\end{align}
Since $\|b_{i,j}^{\ez_k}\|_{L^\fz(\rn)}\ls2^i$,
$\|\bij\|_{L^\fz(\rn)}\ls2^i$ and, for all $\gz\in\zz_+^n$ with $|\gz|\le s$,
$$\int_\rn b_{i,j}^{\ez_k}(x)x^\gz\,dx=0=\int_\rn\bij(x)x^\gz\,dx,$$
it follows, from Remarks \ref{r-vlp}(iv) and \ref{r-hlb} and \eqref{9.16.x},
that, for all $\zeta\in\cs(\rn)$ and $k,\ N\in\nn$,
\begin{align*}
&\sum_{|i|>N}\sum_{j\in\nn}
[|\la b_{i,j}^{\ez_k},\zeta\ra|+|\la\bij,\zeta\ra|]\\
&\hs=\sum_{i=-\fz}^{-N-1}\sum_{j\in\nn}
[|\la b_{i,j}^{\ez_k},\zeta\ra|+|\la\bij,\zeta\ra|]
+\sum_{i=N+1}^{\fz}\sum_{j\in\nn}\lf\{
\lf|\int_{C_1\qij}b_{i,j}^{\ez_k}(x)
[\zeta(x)-P_{C_1\qij}^s\zeta(x)]\,dx\r|\r.\\
&\quad\quad+\lf.\lf|\int_{C_1\qij}\bij(x)
[\zeta(x)-P_{C_1\qij}^s\zeta(x)]\,dx\r|\r\}\\
&\hs\ls\sum_{i=-\fz}^{-N-1}2^i\int_{\rn}|\zeta(x)|\,dx
+\sum_{i=N+1}^{\fz}\sum_{j\in\nn}2^i
\int_{C_1\qij}|\zeta(x)-P_{C_1\qij}^s\zeta(x)|\,dx\\
&\hs\ls2^{-N}\|\zeta\|_{L^1(\rn)}+\sum_{i=N+1}^{\fz}\sum_{j\in\nn}
2^i\|\chi_{\qij}\|_{L^{\frac{p(\cdot)}{r}}(\rn)}
\|\zeta\|_{\cl_{1,\frac{p(\cdot)}{r},s}(\rn)}\\
&\hs\ls2^{-N}\|\zeta\|_{L^1(\rn)}
+\|\zeta\|_{\cl_{1,\frac{p(\cdot)}{r},s}(\rn)}
\sum_{i=N+1}^{\fz}2^i\|\chi_{\boz_i}\|_{L^{\frac{p(\cdot)}{r}}(\rn)}\\
&\hs\ls2^{-N}\|\zeta\|_{L^1(\rn)}
+\|\zeta\|_{\cl_{1,\frac{p(\cdot)}{r},s}(\rn)}
\lf[\sup_{i\in\zz}2^i\|\chi_{\boz_i}\|_{\lv}\r]
^{r}\sum_{i=N+1}^{\fz}2^{-i(r-1)}\\
&\hs\ls2^{-N}\|\zeta\|_{L^1(\rn)}
+2^{-N(r-1)}\|\zeta\|_{\cl_{1,\frac{p(\cdot)}{r},s}(\rn)}\|f\|_{\whv}^r,
\end{align*}
which tends to $0$ as $N\to\fz$,
where $r$ is chosen such that $r\in(\max\{p_+,1\},\fz)$ and, in the last inequality,
we used the fact that,  for any $\zeta\in\cs(\rn)$,
$\|\zeta\|_{\cl_{1,\frac{p(\cdot)}{r},s}(\rn)}$ is finite
(see \cite[Lemma 2.8]{zyl}). Similarly, we have
$$\sum_{|i|\le N}\sum_{j\in\nn}[|\la b_{i,j}
^{\ez_k},\zeta\ra|+|\la\bij,\zeta\ra|]<\fz.$$
Therefore, by the argument same as that used in \cite[p.\,651]{lyj},
we conclude that \eqref{10.7.x1} holds true.

For all $i\in\zz$ and $j\in\nn$,
let $B_{i,\,j}$ be the ball having the same center
as $Q_{i,\,j}$ with the radius $5\sqrt n C_1 l_{Q_{i,\,j}}$,
$$\aij:=\frac{\bij}{C_2 2^i \|\chi_{\Bij}\|_{\lv}}\quad\mbox{and}
\quad\lij:=C_2 2^i \|\chi_{\Bij}\|_{\lv}.$$
Then, from the properties of $\bij$, it follows that
$\aij$ is a $(p(\cdot),\fz,s)$-atom associated with the ball
$\Bij$ and, due to \eqref{10.7.x1},
$f=\sum_{i\in\zz}\sum_{j\in\nn}\lij\aij$ in $\cs'(\rn)$.
Moreover, by Definition \ref{atd2}, Remark \ref{2.5.y} and \eqref{9.16.x},
we find that
\begin{align*}
\|f\|_{\whz}&\ls\sup_{i\in\zz}2^i\lf\|\sum_{j\in\nn}\chi_\Bij\r\|_{\lv}
\ls\sup_{i\in\zz}2^i\lf\|\sum_{j\in\nn}
\chi_{\qij}\r\|_{\lv}\\
&\ls\sup_{i\in\zz}2^i
\|\chi_{\boz_i}\|_{\lv}\ls \|f\|_{\whv}.
\end{align*}
Thus, $f\in\whz$ and $\|f\|_\whz\ls \|f\|_\whv$. This shows
$$\whv\st\whz,$$
which completes the proof of Theorem \ref{atthm1}.
\end{proof}

\section{Molecular characterizations of $\whv$\label{s-mole}}
\hskip\parindent
  In this section, we establish the molecular characterization of $\whv$
and begin with the following definition of molecules.

\begin{defn}\label{mod1}
Let $p(\cdot)\in C^{\log}(\rn)$, $q\in (1,\fz]$, $s\in{\zz}_+$ and
$\epsilon\in (0, \fz)$. A measurable function $m$ is called a
$(p(\cdot), q, s,\epsilon)$-\emph{molecule} associated
with some ball $B\subset\rn$ if
\begin{enumerate}
\item[{\rm (i)}] for each $j\in\nn$, $\|m\|_{L^q(U_j(B))}\le 2^{-j\epsilon}
|U_j(B)|^{\frac{1}{q}}\|\chi_B\|_{\lv}^{-1}$,
where $U_0(B):=B$ and, for all $j\in\nn$,
$U_j(B):=(2^j B) \backslash (2^{j-1} B)$;

\item[{\rm (ii)}] $\int_{\rn}m(x)x^\beta dx=0$ for all
$\beta\in\zz_+^n$ with $|\beta|\leq s$.
\end{enumerate}
\end{defn}

\begin{defn}\label{mod2}
Let $p(\cdot)\in C^{\log}(\rn)$, $q\in(1,\fz]$,
$s\in(\frac{n}{p_-}-n-1,\fz)\cap{\zz}_+$ with $p_-$ as in \eqref{2.1x},
and $\epsilon\in (0, \fz)$.
The \emph{variable weak molecular Hardy space}
$\whm$ is defined as the space of all $f\in\cs'(\rn)$
which can be decomposed as  $f=\sum_{i\in\zz}\sum_{j\in\nn}\lij\mij$
in $\cs'(\rn)$, where $\{\mij\}_{i\in\zz,j\in\nn}$ is a sequence of
$(p(\cdot),q,s,\epsilon)$-molecules associated with balls
$\{\Bij\}_{i\in\zz,j\in\nn}$,
$\{\lij\}_{i\in\zz,j\in\nn}:=\{\wz A2^i\|\chi_{\Bij}\|_{\lv}\}_{i\in\zz,j\in\nn}$ with
$\wz A$ being a positive constant independent of $i,\ j$, and there exist
positive constants $A$ and $C$ such that, for all $i\in\zz$ and
$x\in\rn, \sum_{j\in\nn}\chi_{C\Bij}(x)\le A$.

Moreover, for any $f\in\whm$, define
$$\|f\|_{\whm}:=\inf\lf[\sup_{i\in\zz}{\lf\|\lf\{\sum_{j\in\nn}
\lf[\frac{\lij\chi_\Bij}
{\|\chi_\Bij\|_{\lv}}\r]^{\underline{p}}\r\}^{1/{\underline{p}}}\r\|
_{\lv}}\r],\noz\\$$
where the infimum is taken over all decompositions of $f$ as above.
\end{defn}

\begin{thm}\label{mothm1}
Let $p(\cdot)\in C^{\log}(\rn)$, $q\in(\max\{p_+,1\},\fz]$,
$s\in(\frac{n}{p_-}-n-1,\fz)\cap{\zz}_+$ and
$\epz\in (n+s+1, \fz)$, where $p_+$ and $p_-$ are as in \eqref{2.1x}.
Then $\whv=\whm$ with equivalent quasi-norms.
\end{thm}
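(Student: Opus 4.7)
The plan is to exploit the atomic characterization $\whv=\wha$ from Theorem \ref{atthm1} in order to reduce the statement to comparing the atomic and molecular spaces. One direction is essentially free: any $(p(\cdot),q,s)$-atom $a$ supported in a ball $B$ is automatically a $(p(\cdot),q,s,\epsilon)$-molecule associated with $B$ for any $\epsilon\in(0,\fz)$, since $\|a\|_{L^q(U_j(B))}=0$ for all $j\in\nn$ and $\|a\|_{L^q(U_0(B))}\le|B|^{1/q}\|\chi_B\|_{\lv}^{-1}$. Therefore any atomic decomposition of $f\in\wha$, satisfying the bounded overlap and the coefficient normalization prescribed in Definition \ref{atd2}, is also a molecular decomposition in the sense of Definition \ref{mod2}. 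Combined with Theorem \ref{atthm1}, this yields $\whv\subset\whm$ with $\|f\|_\whm\ls\|f\|_\whv$.

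For the reverse inclusion $\whm\subset\whv$, the plan is to prove the key technical claim that every $(p(\cdot),q,s,\epsilon)$-molecule $m$ associated with a ball $B$ admits a decomposition $m=\sum_{k=0}^\fz\mu_k a_k$ in $\cs'(\rn)$, where each $a_k$ is a $(p(\cdot),q,s)$-atom supported in $2^{k+1}B$ and $|\mu_k|\ls 2^{-k(\epsilon-n-s-1)}\|\chi_B\|_{\lv}/\|\chi_{2^{k+1}B}\|_{\lv}$. Following the approach of \cite[Theorem 4.13]{hyy}, this is achieved by writing $m=\sum_{k=0}^\fz m\chi_{U_k(B)}$ and then, for each $k\in\zz_+$, using the unique polynomial $P_k$ on $2^{k+1}B$ of degree at most $s$ that realizes the orthogonal projection of $m\chi_{U_k(B)}$ onto polynomials of degree at most $s$ with respect to the normalized measure on $2^{k+1}B$. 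Then one groups the correction terms by summation by parts across scales $k$, obtaining atoms supported in $2^{k+1}B$ with vanishing moments up to order $s$, whose $L^q$-norms decay in $k$ thanks to the molecular condition (i) of Definition \ref{mod1} and Lemma \ref{zhuolemma}. The hypothesis $\epsilon>n+s+1$ ensures that the resulting coefficients $\{\mu_k\}_{k\in\zz_+}$ decay fast enough to guarantee convergence in $\cs'(\rn)$ and the absolute summability needed to pass to atomic quasi-norms.

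Given this key claim, if $f\in\whm$ is decomposed as $f=\sum_{i\in\zz}\sum_{j\in\nn}\lij\mij$, then applying the molecule-to-atom decomposition to each $\mij$ yields
\[
f=\sum_{i\in\zz}\sum_{j\in\nn}\sum_{k=0}^\fz\lij\mu_k^{(i,j)}a_{i,j,k} \quad\text{in }\cs'(\rn),
\]
where $a_{i,j,k}$ is a $(p(\cdot),q,s)$-atom associated with $B_{i,j,k}:=2^{k+1}\Bij$. The remaining task is to check that this is a legitimate atomic decomposition in the sense of Definition \ref{atd2} and that its quasi-norm is controlled by $\|f\|_\whm$. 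Using Remark \ref{atomlm3}, the atomic quasi-norm equals $\sup_{i\in\zz}2^i\|\sum_{j,k}\chi_{\Bij,k}\|_\lv$ up to equivalent constants, and with Remark \ref{2.5.y} one can trade dilation by $2^{k+1}$ for a factor $2^{(k+1)n/r}$ at the cost of replacing the original balls $\Bij$, while the decay $2^{-k(\epsilon-n-s-1)}$ in the coefficients absorbs this growth provided $\epsilon>n+s+1$. The main obstacle is precisely this bookkeeping: one has to carefully aggregate the new atoms indexed by $(i,j,k)$ into a family satisfying the bounded overlap property required for membership in $\wha$, and then apply Theorem \ref{atthm1} to conclude $f\in\whv$ with $\|f\|_\whv\ls\|f\|_\whm$.
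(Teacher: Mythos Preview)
Your easy direction and the molecule-to-atom decomposition claim are both correct and match the paper's argument (the paper also cites \cite{hyy,tg80} and obtains, for each molecule $m$ associated with $B$, a splitting $m=\sum_{k\ge0}\mu_k a_k+\sum_{|\ell|\le s}\sum_{k\ge0}\mu_\ell^k a_\ell^k$ with atoms supported in $2^{k+1}B$). One minor imprecision: the coefficients in the paper come out as $\mu_k\sim 2^{-k\epsilon}\|\chi_{2^kB}\|_{\lv}/\|\chi_B\|_{\lv}$, not the form you wrote; this matters below because it means $\lij\mu_k^{(i,j)}\sim \wz A\,2^i\,2^{-k\epsilon}\|\chi_{2^{k+1}\Bij}\|_{\lv}$, so the ``$\wz A$'' carries a factor $2^{-k\epsilon}$ that depends on $k$.

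The substantive divergence is in the last step. The paper does \emph{not} attempt to feed the triple sum back into Definition~\ref{atd2} and invoke Theorem~\ref{atthm1}. Instead, it uses the atom decomposition of each $\mij$ only to obtain pointwise bounds on $(\mij)^*$ off dilates of $\Bij$ (see \eqref{mostar}), and then repeats the direct maximal-function estimate from Step~1 of Theorem~\ref{atthm1}'s proof, splitting $f=f_1+f_2$ at level $i_0$ and bounding $\|\chi_{\{f^*>\alpha\}}\|_{\lv}$ by hand. This completely sidesteps the issue of bounded overlap for the new balls.

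Your route, by contrast, faces exactly the obstacle you name but do not resolve. Definition~\ref{atd2} requires, for each fixed level $i$, both that $\lij=\wz A\,2^i\|\chi_{\Bij}\|_{\lv}$ with a \emph{single} constant $\wz A$, and that $\sum_j\chi_{c\Bij}\le A$ pointwise. After expansion your coefficients carry an extra $2^{-k\epsilon}$, so grouping all $(j,k)$ at level $i$ violates the first requirement, and Remark~\ref{atomlm3} does not apply to give $\sup_i 2^i\|\sum_{j,k}\chi_{B_{i,j,k}}\|_{\lv}$. If instead you keep $k$ fixed and treat each slice $f_k:=\sum_{i,j}\lij\mu_k^{(i,j)}a_{i,j,k}$ separately, the pointwise overlap of $\{2^{k+1}\Bij\}_j$ at a given $i$ can blow up like $2^{kn}$ (Remark~\ref{2.5.y} controls the $L^{p(\cdot)}$ norm of the sum of indicators, not the $L^\infty$ norm), so the constant $A$ in Definition~\ref{atd2} is $k$-dependent and one must track how it enters the implicit constants of Theorem~\ref{atthm1} before summing in $k$ via Aoki--Rolewicz. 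None of this is carried out in your proposal, and it is not ``just bookkeeping'': making it rigorous essentially amounts to redoing the direct estimate the paper performs. As written, the argument for $\whm\subset\whv$ has a genuine gap.
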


\begin{proof}
Notice that a $(p(\cdot), \fz,s)$-atom
 is also a $(p(\cdot), q, s,\epsilon)$-molecule.
Thus, we have
$$\whv\subset\whz\subset\whm.$$
Therefore, to prove this theorem, it suffices to show $\whm\subset\whv$.

Let $m$ be any fixed $(p(\cdot),q,s,\epsilon)$-molecule
associated with some ball $B:=B(x_B,r_B),$
where $x_B\in\rn$ and $r_B\in(0,\fz)$.
Then we claim that $m$ is an infinite linear combination of
$(p(\cdot),q,s)$-atoms.

To prove this, for all $k\in\zz_+$, let $m_k:=m\chi_{U_k(B)}$ with
$U_k(B)$ as in Definition \ref{mod1}(i),
and $\cp_k$ be the {linear vector space} generated by the set
$\{x^\gamma\chi_{U_k(B)}\}_{|\gamma|\le s}$ of ``polynomials". It is well known
(see, for example, \cite{tg80}) that, for any given $k\in\zz_+$,
there exists a unique {polynomial}
$P_k\in\cp_k$ such that, for all multi-indices $\beta$ with $|\beta|\le s$,
\begin{align}\label{11.9.x1}
\int_{\rn}x^\beta[m_k(x)-P_k(x)]\,dx=0,
\end{align}
where $P_k$ is given by
\begin{align}\label{11.9.x5}
P_k:=\sum_{\beta\in\zz_+^n,|\beta|\le s}
\lf[\frac{1}{|U_k(B)|}\int_{\rn}x^\beta m_k(x)\,dx\r]Q_{\beta,k}
\end{align}
and $Q_{\beta,k}$ is the unique polynomial in $\cp_k$ satisfying
that, for all multi-indices $\beta$ with $|\beta|\le s$ and the
\emph{Kronecker delta} $\delta_{\gamma,\beta}$,
\begin{align}\label{11.9.x3}
\int_{\rn}x^\gamma Q_{\beta,k}(x)\,dx=|U_k(B)|\delta_{\gamma,\beta},
\end{align}
where, when $\gamma=\beta$, $\delta_{\gamma,\beta}:=1$ and, when $\gamma\neq \beta$,
$\delta_{\gamma,\beta}:=0$.

Recall that it was proved in \cite[p.\,83]{tg80} that, for all $k\in\zz_+$,
\begin{align*}
\sup_{x\in U_k(B)}|P_k(x)|\ls \frac{1}{|U_k(B)|}\|m_k\|_{L^1(\rn)}.
\end{align*}
From this and the H\"older inequality, we deduce that, for all $k\in\zz_+$,
\begin{align}\label{11.9.x2}
\|m_k-P_k\|_{L^q(U_k(B))}
&\le\|m_k\|_{L^q(U_k(B))}+\|P_k\|_{L^q(U_k(B))}\le \wz C\|m_k\|_{L^q(U_k(B))}\noz\\
&\le \wz C2^{-k\epsilon}|2^kB|^{1/q}\|\chi_{B}\|_{\lv}^{-1},
\end{align}
where $\wz C$ is a positive constant independent of $m$, $B$ and $k$.
Obviously, for all $k\in\zz_+$, $\supp (m_k-P_k)\st U_k(B)$.
By this and \eqref{11.9.x1}, for any given $k\in\zz_+$, if we let
$$a_k:=\frac{2^{k\epsilon}\|\chi_{B}\|_{\lv}(m_k-P_k)}{\wz C\|\chi_{2^kB}\|_{\lv}},$$
we then conclude that $a_k$ is a $(p(\cdot),q,s)$-atom. Therefore,
\begin{align}\label{11.9.y2}
\sum_{k=0}^{\fz}(m_k-P_k)=\sum_{k=0}^{\fz}\mu_ka_k
\end{align}
is an infinite linear combination of $(p(\cdot),\fz,s)$-atoms,
where, for any $k\in\zz_+$,
$$\mu_k:=\wz C2^{-k\epsilon}\|\chi_{2^kB}\|_{\lv}/\|\chi_{B}\|_{\lv}.$$

Next we prove that $\sum_{k=0}^\fz P_k$ can be divided
into an infinite linear combination of $(p(\cdot),\fz,s)$-atoms.
For any $j\in\zz_+$ and $\ell\in\zz_+^n$, let
$$N_\ell^j:=\sum_{k=j}^\fz\int_{U_k(B)}m_k(x)x^\ell\,dx.$$
Then, for any $\ell\in\zz_+^n$ with $|\ell|\le s$, it holds true that
\begin{align}\label{11.9.x6}
N_\ell^0=\sum_{k=0}^\fz\int_{U_k(B)}m_k(x)x^\ell\,dx
=\int_{\rn}m(x)x^\ell\,dx=0.
\end{align}
From this and \eqref{11.9.x5}, we further deduce that
\begin{align}\label{11.9.x7}
\sum_{k=0}^{\fz}P_k&=\sum_{\ell\in\zz_+^n,|\ell|\le s}
\sum_{k=0}^{\fz}|U_k(B)|^{-1}Q_{\ell,k}\int_{\rn}m_k(x)x^\ell\,dx\noz\\
&=\sum_{\ell\in\zz_+^n,|\ell|\le s}\sum_{k=0}^{\fz}N_\ell^{k+1}
\lf[|U_{k+1}(B)|^{-1}Q_{\ell,k+1}\chi_{U_{k+1}(B)}(x)-|U_k(B)|^{-1}Q_{\ell,k}\chi_{U_k(B)}(x)\r]\noz\\
&=:\sum_{\ell\in\zz_+^n,|\ell|\le s}\sum_{k=0}^{\fz}b_{\ell}^k.
\end{align}
By an argument similar to that used in the proof of \cite[(4.35)]{hyy}, we conclude that,
for any $k\in\zz_+$ and $\ell\in\zz_+^n$ with $|\ell|\le s$,
\begin{equation}\label{5.7x}
\|b_\ell^k\|_{L^\fz(\rn)}\ls 2^{-k\epsilon}\|\chi_B\|_{\lv}^{-1}\quad
{\rm and}\quad \supp b_\ell^k\subset 2^{k+1}B;
\end{equation}
moreover, for all $\gamma\in\zz_+^n$ with $|\gamma|\le s$,
$\int_\rn b_\ell^k(x)x^\gamma\,dx=0$.
For all $k\in\zz_+$ and $\ell\in\zz_+^n$ with $|\ell|\le s$, let
$$\mu_\ell^k:=2^{-k\epsilon}\frac{\|\chi_{2^{k+1}B}\|_{\lv}}{\|\chi_{B}\|_{\lv}}
\quad{\rm and}\quad
a_\ell^k:=2^{k\epsilon}b_{\ell}^k\frac{\|\chi_{B}\|_{\lv}}{\|\chi_{2^{k+1}B}\|_{\lv}}.$$
Then, for any $k\in\zz_+$ and $\ell\in\zz_+^n$ with $|\ell|\le s$,
by \eqref{5.7x} and the definition of $a_\ell^k$, we find that $a_\ell^k$
is a $(p(\cdot),\fz,s)$-atom supported on $2^{k+1}B$ up to a positive constant multiple.
Thus,
\begin{equation}\label{1.29x}
\sum_{k=0}^{\fz}P_k=\sum_{\ell\in\zz_+^n,|\ell|\le s}\sum_{k=0}^{\fz}
\mu_\ell^ka_\ell^k
\end{equation}
forms an infinite linear combination of $(p(\cdot),\fz,s)$-atoms.

Combining \eqref{11.9.y2} and \eqref{1.29x}, we conclude that
\begin{align}\label{11.14x1}
m=\sum_{k=0}^{\fz}m_k=\sum_{k=0}^{\fz}(m_k-P_k)+\sum_{k=0}^{\fz}P_k=\sum_{k=0}^{\fz}\mu_ka_k+
\sum_{\ell\in\zz_+^n,|\ell|\le s}\sum_{k=0}^{\fz}\mu_\ell^ka_\ell^k.
\end{align}
This shows that a $(p(\cdot),q,s,\epsilon)$-molecule can be divided into an infinite
linear combination of $(p(\cdot),q,s)$-atoms. Therefore, the above claim holds true.

Now we prove $\whm\subset\whv$. Let $f\in\whm$. Then, by an argument
similar to that used in Remark \ref{atomlm3}, we know that there exist $\{\lij\}_{i\in\zz,j\in\nn}$ and a sequence $\{\mij\}_{i\in\zz,j\in\nn}$ of $(p(\cdot), q, s,\epsilon)$-molecules
associated with balls $\{\Bij\}_{i\in\zz,j\in\nn}$
such that, for all $i\in\zz$, $\sum_{j\in\nn}\chi_{c\Bij}\ls 1$ with
$c\in(0,1]$ being a positive constant independent of $i$, and
$f=\sum_{i\in\zz}\sum_{j\in\nn}\lij\mij$ in $\cs'(\rn)$, moreover,
\begin{align}\label{11.17.x1}
\|f\|_{\whm}\sim\sup_{i\in\zz}2^i\lf\|\sum_{j\in\nn}\chi_\Bij\r\|_{\lv}.
\end{align}
Next we prove that, for any $\az\in (0, \fz)$,
\begin{align}\label{10.11.z}
\az\lf\|\chi_{\{x\in\rn: f^*(x)>\az\}}\r\|
_{\lv}\ls\|f\|_{\whm},
\end{align}
where the implicit positive constant is independent of $\az$
and $f^*:=f^*_{N,+}$ with $N$ as in Theorem \ref{mthm1}.

For any $\az\in(0,\fz)$, let $i_0\in\zz$
such that $2^{i_0}\le\az<2^{i_0+1}$. Then we have
$$f=\sum_{i=-\fz}^{i_0-1}\sum_{j\in\nn}\lij\mij
+\sum_{i=i_0}^{\fz}\sum_{j\in\nn}\lij\mij=:f_1+f_2$$
and
\begin{align}\label{emodep1}
\lf\|\chi_{\{x\in\rn:\ f^*(x)>\az\}}\r\|_{\lv}
&\ls\lf\|\chi_{\{x\in\rn:\ f_1^*(x)>\frac{\az}{2}\}}\r\|_{\lv}
+\lf\|\chi_{\{x\in\rn:\ f_2^*(x)>\frac{\az}{2}\}}\r\|_{\lv}\noz\\
&=:{\rm I}_1+{\rm I}_2.
\end{align}

We first estimate ${\rm I}_1$.
To this end, we need another estimate for $(\mij)^*$.  From \eqref{11.14x1}, we deduce that,
for all $i\in\zz$ and $j\in\nn$, there exists a sequence of multiples
of $(p(\cdot),q,s)$-atoms, $\{a_{i,j}^l\}_{l\in\zz_+}$, associated with balls
$\{2^{l+1}\Bij\}_{l\in\zz_+}$ such that
$$\|a_{i,j}^l\|_{L^q(\rn)}\ls
\frac{2^{-l\epsilon}|2^{l+1}\Bij|^{1/q}}{\|\chi_{\Bij}\|_{\lv}}$$
and $\mij=\sum_{l\in\zz_+}a_{i,j}^l$ almost everywhere in $\rn$.
Then, for all $i\in\zz\cap(-\fz,i_0-1]$ and $j\in\nn$, we have
\begin{align}\label{10.11.x}
(\mij)^*\le\sum_{l\in\zz_+}(a_{i,j}^l)^*
=\sum_{l\in\zz_+}\sum_{k\in\zz_+}(a_{i,j}^l)^*
\chi_{U_k(2^l\Bij)}=:\sum_{l\in\zz_+}\sum_{k=0}^2{\rm J}_{l,k}
+\sum_{l\in\zz_+}\sum_{k=3}^{\fz}{\rm J}_{l,k},
\end{align}
where $U_k(2^l\Bij)$ is defined as in Definition \ref{mod1}(i) with $B$
replaced by $2^l\Bij$.
Thus, it follows that
\begin{align}\label{11.16.x3}
{\rm I}_1&=\lf\|\chi_{\{x\in\rn:\ f_1^*(x)>\frac{\az}{2}\}}\r\|_{\lv}
\le\lf\|\chi_{\{x\in\rn:\ \sum_{i=-\fz}^{i_0-1}\sum_{j\in\nn}\lij(\mij)^*(x)
>\frac{\az}{2}\}}\r\|_{\lv}\noz\\
&\ls\lf\|\chi_{\{x\in\rn:\ \sum_{i=-\fz}^{i_0-1}
\sum_{j\in\nn}\sum_{l\in\zz_+}
\sum_{k=0}^2\lij{\rm J}_{l,k}>\frac{\az}{4}\}}\r\|_{\lv}\noz\\
&\quad+\lf\|\chi_{\{x\in\rn:\ \sum_{i=-\fz}^{i_0-1}\sum_{j\in\nn}
\sum_{l\in\zz_+}\sum_{k=3}^{\fz}\lij{\rm J}_{l,k}
>\frac{\az}{4}\}}\r\|_{\lv}\noz\\
&=:{\rm I}_{1,1}+{\rm I}_{1,2}.
\end{align}

For ${\rm I}_{1,1}$, by an argument similar to that used in the proof of \eqref{eqatomi11},
we conclude that
\begin{align}\label{11.15.x1}
\az{\rm I}_{1,1}\ls\|f\|_{\whm}.
\end{align}

On the other hand, by an argument similar to that used in the proof of \eqref{11.14.x2},
we conclude that, for any $i\in\zz$, $j\in\nn$, $l\in\zz_+$, $k\in[3,\fz)\cap\zz_+$,
$x\in U_k(2^l\Bij)$ and $y\in2^{l+1}\Bij$,
\begin{align}\label{mostar}
{\rm J}_{l,k}
&\ls\frac{|y-\xij|^{s+1}}{|x-\xij|^{n+s+1}}
\int_{2^{l+1}\Bij} |a_{i,j}^l(y)|\,dy\chi_{U_k(2^l\Bij)}(x)\noz\\
&\ls\frac{|y-\xij|^{s+1}}{|x-\xij|^{n+s+1}}\|a_{i,j}^l\|
_{L^q(\rn)}|2^{l+1}\Bij|^{1/{q'}}\chi_{U_k(2^l\Bij)}(x)\noz\\
&\ls\frac{2^{-l(n+\epsilon)-k(n+s+1)}}{\rij^n\|\chi_{\Bij}\|_{\lv}}
|2^{l+1}\Bij|\chi_{U_k(2^l\Bij)}(x)\noz\\
&\ls\frac{2^{-l\epsilon-k(n+s+1)}}
{\|\chi_{\Bij}\|_{\lv}}\chi_{U_k(2^l\Bij)}(x),
\end{align}
which, combined with \eqref{10.11.x}, \eqref{11.17.x1}, Remark \ref{2.5.y},
the fact that $\epsilon\in(n+s+1,\fz)$ and via choosing
$r\in(\frac{n}{n+s+1},\underline p)$, implies that
\begin{align*}
\az{\rm I}_{1,2}
&\ls\az^{1-1/r}\lf\|\sum_{i=-\fz}^{i_0-1}\sum_{j\in\nn}\sum_{l\in\zz_+}
\sum_{k=3}^{\fz}2^i2^{-l\epsilon}2^{-k(n+s+1)}
\chi_{U_k(2^l\Bij)}\r\|_{L^{\frac{p(\cdot)}{r}}(\rn)}^{1/r}\\
&\ls\az^{1-1/r}\lf[\sum_{l\in\zz_+}\sum_{k=3}^{\fz}2^{-l\epsilon}2^{-k(n+s+1)}
\sum_{i=-\fz}^{i_0-1}2^i\lf\|\sum_{j\in\nn}
\chi_{U_k(2^l\Bij)}\r\|_{L^{\frac{p(\cdot)}{r}}(\rn)}\r]^{1/r}\\
&\ls\az^{1-1/r}\lf[\sum_{l\in\zz_+}
\sum_{k=3}^{\fz}2^{-l\epsilon}2^{-k(n+s+1)}2^{\frac{n(k+l)}{r}}
\sum_{i=-\fz}^{i_0-1}2^i\lf\|\sum_{j\in\nn}
\chi_{\Bij}\r\|_{L^{\frac{p(\cdot)}{r}}(\rn)}\r]^{1/r}\\
&\ls\az^{1-1/r}\lf[\sum_{i=-\fz}^{i_0-1}2^i\lf\|\sum_{j\in\nn}
\chi_{\Bij}\r\|_{L^{\frac{p(\cdot)}{r}}(\rn)}\r]^{1/r}\\
&\ls\az^{1-1/r}\sup_{i\in\zz}2^i\lf\|\sum_{j\in\nn}\chi_{\Bij}\r\|_{\lv}
\lf[\sum_{i=-\fz}^{i_0-1}2^{i(1-r)}\r]^{1/r}
\sim\|f\|_{\whm}.
\end{align*}
From this, \eqref{11.16.x3} and \eqref{11.15.x1}, we deduce that
\begin{align}\label{11.16.x4}
\az{\rm I}_1\ls\|f\|_{\whm}.
\end{align}

We next estimate ${\rm I_2}$. By \eqref{10.11.x}, we know that
\begin{align}\label{11.16.x1}
{\rm I}_2&\ls\lf\|\chi_{\{x\in\rn:\ \sum_{i=i_0}^{\fz}
\sum_{j\in\nn}\sum_{l\in\zz_+}
\sum_{k=0}^2\lij{\rm J}_{l,k}>\frac{\az}{4}\}}\r\|_{\lv}\noz\\
&\quad\hs+\lf\|\chi_{\{x\in\rn:\ \sum_{i=i_0}^{\fz}\sum_{j\in\nn}
\sum_{l\in\zz_+}\sum_{k=3}^{\fz}\lij{\rm J}_{l,k}
>\frac{\az}{4}\}}\r\|_{\lv}\noz\\
&=:{\rm I}_{2,1}+{\rm I}_{2,2}.
\end{align}

Let $\q1\in(1,\min\{\frac{q}{\max\{p_+,1\}},\frac{1}{b}\})$ and
$a\in(1-\frac1{\q1},\fz)$ for any $b\in(0,\underline{p})$,
where $p_+$ and $\underline{p}$ are as in \eqref{2.1x}, respectively, \eqref{2.1y}.
Then, with an argument similar to that used in the proof of \eqref{11.15.x1}, we obtain
\begin{align*}
{\rm I}_{2,1}&\ls 2^{-i_0\q1(1-a)}
\lf\{\sum_{i=i_0}^{\fz}2^{[(1-a){\q1}-1]ib}
\r\}^{\frac{1}{b}}\sup_{i\in\zz}2^i\lf\|
\lf(\sum_{j\in\nn}\chi_{\Bij}\r)
^{\frac{1}{b}}\r\|_{\lv}\\
&\ls\az^{-1}\|f\|_{\whm},
\end{align*}
which shows that
\begin{align}\label{11.16.x5}
\az{\rm I}_{2,1}\ls\|f\|_{\whm}.
\end{align}

On the other hand, let $a\in(\frac{1}{\underline p},\fz)$ and
$b\in(1-\frac{1}{a},\fz)$. By the H\"older inequality,
we find that, for all $x\in\rn$,
\begin{align}\label{10.11.y}
\sum_{i=i_0}^{\fz}\sum_{j\in\nn}
\sum_{l\in\zz_+}\sum_{k=3}^{\fz}\lij{\rm J}_{l,k}
&\le\lf(\sum_{i=i_0}^{\fz}2^{iba'}\r)^{1/a'}\lf
[\sum_{i=i_0}^{\fz}2^{-iba}\lf(\sum_{j\in\nn}
\sum_{l\in\zz_+}\sum_{k=3}^{\fz}\lij{\rm J}_{l,k}\r)^a\r]^{1/a}\noz\\
&=\frac{2^{i_0b}}{(2^{ba'}-1)^{1/a'}}\lf[\sum_{i=i_0}^{\fz}
2^{-iba}\lf(\sum_{j\in\nn}\sum_{l\in\zz_+}
\sum_{k=3}^{\fz}\lij{\rm J}_{l,k}\r)^a\r]^{1/a}.
\end{align}
Choose $d\in(\frac{n}{n+s+1},1)$. Then, by \eqref{10.11.y},
\eqref{mostar} and Remark \ref{2.5.y},
we find that
\begin{align*}
\hs\hs{\rm I}_{2,2}
&\le\lf\|\chi_{\{x\in\rn:\ \frac{2^{i_0b}}{(2^{ba'}-1)^{1/a'}}
\lf[\sum_{i=i_0}^{\fz}2^{-iba}\lf(\sum_{j\in\nn}\sum_{l\in\zz_+}
\sum_{k=3}^{\fz}\lij{\rm J}_{l,k}\r)^a\r]^{1/a}>2^{i_0-2}\}}\r\|_{\lv}\\
&\ls 2^{-i_0a(1-b)}\lf\|\sum_{i=i_0}^{\fz}2^{-iba}\lf(\sum_{j\in\nn}
\sum_{l\in\zz_+}\sum_{k=3}^{\fz}\lij{\rm J}_{l,k}\r)^a\r\|_{\lv}\\
&\ls2^{-i_0a(1-b)}\lf\|\sum_{i=i_0}^{\fz}2^{i(1-b)}
\sum_{l\in\zz_+}\sum_{k=3}^{\fz}2^{-l\epsilon}2^{-k(n+s+1)}
\sum_{j\in\nn}\chi_{U_k(2^l\Bij)}\r\|_{L^{ap(\cdot)(\rn)}}^a\\
&\ls2^{-i_0a(1-b)}\lf[\sum_{i=i_0}^{\fz}2^{i(1-b)}
\sum_{l\in\zz_+}\sum_{k=3}^{\fz}2^{-l\epsilon}2^{-k(n+s+1)}
\lf\|\sum_{j\in\nn}\chi_{U_k(2^l\Bij)}\r\|_{L^{ap(\cdot)(\rn)}}\r]^a\\
&\ls2^{-i_0a(1-b)}\lf[\sum_{i=i_0}^{\fz}2^{i(1-b)}
\lf\{\sum_{l\in\zz_+}\sum_{k=3}^{\fz}2^{-l\epsilon}2^{-k(n+s+1)}2^{\frac{n(k+l)}{d}}\r\}
\lf\|\sum_{j\in\nn}\chi_{\Bij}\r\|_{L^{ap(\cdot)(\rn)}}\r]^a\\
&\ls2^{-i_0a(1-b)}\lf[\sum_{i=i_0}^{\fz}2^{i(1-b)}
\lf\|\sum_{j\in\nn}\chi_{\Bij}\r\|_{L^{ap(\cdot)(\rn)}}\r]^a\\
&\ls2^{-i_0a(1-b)}\lf[\sum_{i=i_0}^{\fz}2^{i(1-b-\frac{1}{a})}\r]^a
\sup_{i\in\zz}2^i\lf\|\sum_{j\in\nn}\chi_{\Bij}\r\|_{\lv}\\
&\ls\az^{-1}\sup_{i\in\zz}2^i\lf\|\sum_{j\in\nn}\chi_{\Bij}\r\|_{\lv}
\sim\az^{-1}\|f\|_{\whm},
\end{align*}
which, combined with \eqref{11.16.x5} and \eqref{11.16.x1},
implies that $\az{\rm I}_2\ls\|f\|_{\whm}$.
This, together with \eqref{emodep1} and \eqref{11.16.x4},
shows that \eqref{10.11.z} holds true and hence
finishes the proof of Theorem \ref{mothm1}.
\end{proof}

\section{Littlewood-Paley function characterizations of $\whv$\label{s-palay}}
\hskip\parindent
In this section, we establish Littlewood-Paley
function characterizations of $\whv$ in Theorems
\ref{lpthm1}, \ref{lpthm2} and \ref{10.10.x}
as an application of the atomic characterization of $\whv$ in Theorem \ref{atthm1}.

Let $\phi\in\mathcal{S}(\mathbb R^n)$ be a radial function satisfying
\begin{align}\label{12.7.x1}
\supp\phi\subset\{x\in\mathbb R^n:\ |x|\le 1\},
\end{align}
\begin{align}\label{12.7.x2}
\int_{\mathbb R^n}\phi(x)x^\gz \,dx=0\ \ \mbox{for all}\ \
\gz\in\zz_+^n\ \ \mbox{with}\ \ |\gz|
\le\max\lf\{\lf\lfloor\frac{n}{p_-}-n-1\r\rfloor,0\r\}
\end{align}
and
\begin{align}\label{12.7.x3}
\int_0^\infty|\widehat{\phi}(\xi t)|^2\,\frac{dt}{t}=1\ \ \mbox{for all}
\ \ \xi\in\rn\setminus\{\vec0_n\}.
\end{align}
Here and hereafter, the \emph{symbol $\lfloor s \rfloor$} for any $s\in\rr$ denotes the maximal integer not larger than
$s$. Recall that, for all $f\in\mathcal{S}'(\mathbb R^n)$,
the \emph{Littlewood-Paley $g$-function},
the \emph{Lusin area function} and the \emph{Littlewood-Paley $g_\lz^*$-function}
of $f$ with $\lz\in(0,\fz)$ are defined,
respectively, by setting, for all $x\in\rn$,
\begin{align*}
 g(f)(x):=\lf[\int_0^\infty\lf|f\ast\phi_t(x)\r|^2\dt\r]^{1/2},
\end{align*}
\begin{align*}
 S(f)(x):=\lf[\int_{\Gamma(x)}
 |f\ast\phi_t(y)|^2\,\frac{\,dy\,dt}{t^{n+1}}\r]^{1/2}
\end{align*}
and
\begin{align*}
 g_\lz^*(f)(x):=\lf[\int_0^\infty\int_{\mathbb  R^n}
 \lf(\frac{t}{t+|x-y|}\r)^{\lz
 n}\lf|f\ast\phi_t(y)\r|^2\,\frac{\,dy\,dt}{t^{n+1}}\r]^{1/2},
\end{align*}
where, for any $x\in\rn$, $\Gamma(x):=\{(y,t)\in\rn\times(0,\fz):\ |y-x|<t\}$
and, for any $t\in(0,\fz)$, $\phi_t(\cdot):=\frac{1}{t^n}\phi(\frac{\cdot}{t})$.

Recall that $f\in\mathcal{S}'(\mathbb R^n)$ is said to
\emph{vanish weakly at infinity} if, for every $\phi\in\mathcal{S}(\mathbb R^n)$,
$f\ast\phi_t\to 0$ in $\mathcal{S}'(\mathbb R^n)$ as $t\to \infty$
(see, for example, \cite[p.\,50]{fs82}).

The main results of this section are stated as follows.

\begin{thm}\label{lpthm1}
Let $p(\cdot)\in C^{\log}(\rn)$.
Then $f\in\whv$ if and only if
$f\in\cs'(\rn)$, $f$ vanishes weakly at infinity and $S(f)\in\wlv$.
Moreover, for all $f\in\whv$,
$$C^{-1}\|S(f)\|_{\wlv}\le\|f\|_{\whv}\le C\|S(f)\|_{\wlv},$$
where $C$ is a positive constant independent of $f$.
\end{thm}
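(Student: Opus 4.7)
The plan is to prove both directions via the atomic characterization from Theorem \ref{atthm1}, which serves as the pivot between the maximal-function and area-function descriptions of the quasi-norm.

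\emph{Necessity.} Starting from $f \in \whv$, Theorem \ref{atthm1} gives $f = \sum_{i,j}\lz_{i,j}a_{i,j}$ in $\cs'(\rn)$ with $(p(\cdot),q,s)$-atoms $a_{i,j}$ on balls $B_{i,j}$, $\lz_{i,j} = \wz A 2^i\|\chi_{B_{i,j}}\|_{\lv}$, bounded overlap of $\{cB_{i,j}\}_j$ and $\sup_i 2^i\|\sum_j \chi_{B_{i,j}}\|_{\lv} \sim \|f\|_{\whv}$. For given $\alpha > 0$, I would pick $i_0$ with $2^{i_0} \le \alpha < 2^{i_0+1}$, split $f = f_1 + f_2$ at this level, and run the same three-piece estimate for $\|\chi_{\{S(f)>\alpha\}}\|_{\lv}$ as in STEP 1 of the proof of Theorem \ref{atthm1}. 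The only new ingredients are: (i) the $L^q(\rn)$-boundedness of $S$, which controls $S(a_{i,j})\chi_{2B_{i,j}}$ in $L^q$ by $|B_{i,j}|^{1/q}/\|\chi_{B_{i,j}}\|_{\lv}$; and (ii) the pointwise estimate $S(a_{i,j})(x) \ls \|\chi_{B_{i,j}}\|_{\lv}^{-1}[\cm(\chi_{B_{i,j}})(x)]^{(n+s+1)/n}$ for $x \notin 2B_{i,j}$, obtained from the vanishing moments and a Taylor expansion of $\phi_t$ exactly as in the derivation of \eqref{eatomst}. Feeding these into Lemma \ref{atlm2}, Proposition \ref{mlmveq} and Remark \ref{2.5.y}, and running the H\"older trick on $i$ used in STEP 1 of Theorem \ref{atthm1}, yields $\alpha\|\chi_{\{S(f)>\alpha\}}\|_{\lv} \ls \|f\|_{\whv}$. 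That $f$ vanishes weakly at infinity is a routine consequence of the atomic representation together with the boundedness given in STEP 1 of the proof of Theorem \ref{mthm1}.

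\emph{Sufficiency.} Fix $\phi$ satisfying \eqref{12.7.x1}--\eqref{12.7.x3}. Since $f$ vanishes weakly at infinity, the Calder\'on-type reproducing formula
$$f = \int_0^\fz (f\ast\phi_t)\ast\phi_t\,\frac{dt}{t} \quad \mbox{in}\quad \cs'(\rn)$$
is available. For each $i \in \zz$, put $\boz_i := \{S(f) > 2^i\}$ and take a Whitney decomposition $\{Q_{i,j}\}_j$ of $\boz_i$ exactly as in STEP 2 of the proof of Theorem \ref{atthm1}. With $\wz\boz_i := \{(y,t) \in \rnn : B(y,t) \subset \boz_i\}$ and $T_{i,j} := \{(y,t)\in\rnn : y\in Q_{i,j},\ (y,t) \in \wz\boz_i \setminus \wz\boz_{i+1}\}$, define
$$b_{i,j}(x) := \int_{T_{i,j}} (f\ast\phi_t)(y)\,\phi_t(x-y)\,\frac{dy\,dt}{t}.$$
Since $\{T_{i,j}\}_{i,j}$ tiles $\rnn$ up to a null set, the reproducing formula gives $f = \sum_{i,j} b_{i,j}$ in $\cs'(\rn)$. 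Support control $\supp b_{i,j} \subset c B_{i,j}$ (with $B_{i,j}$ concentric with $Q_{i,j}$ of comparable radius) follows from $\supp \phi \subset B(\vec0_n,1)$ and the Whitney geometry, and vanishing moments up to order $s$ transfer from $\phi$ to $b_{i,j}$ via \eqref{12.7.x2}.

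The \textbf{main obstacle} is the size bound $\|b_{i,j}\|_{L^q(\rn)} \ls 2^i|B_{i,j}|^{1/q}$. By duality against $g \in L^{q'}(\rn)$ of unit norm, Cauchy-Schwarz reduces this to controlling $\iint_{T_{i,j}}|f\ast\phi_t|^2\,\frac{dy\,dt}{t^{n+1}}$, which is $\ls 2^{2i}|Q_{i,j}|$ because $(y,t) \in \wz\boz_i \setminus \wz\boz_{i+1}$ forces $B(y,t)$ to meet $\boz_{i+1}^\com$, combined with a dual tent-type $L^{q'}$ bound for $g$ provided by the $L^{q'}$-boundedness of the ordinary area function. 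Normalizing, $a_{i,j} := b_{i,j}/(C_0 2^i\|\chi_{B_{i,j}}\|_{\lv})$ becomes a $(p(\cdot),q,s)$-atom, and Remark \ref{2.5.y} yields
$$\sup_{i\in\zz}2^i\Big\|\sum_{j\in\nn}\chi_{B_{i,j}}\Big\|_{\lv} \ls \sup_{i\in\zz}2^i\|\chi_{\boz_i}\|_{\lv} \le \|S(f)\|_{\wlv}.$$
Theorem \ref{atthm1} then closes the argument. The variable-exponent setting complicates only this last passage: the reverse-doubling estimate exploited in \cite{lyj} fails for $t^{p(\cdot)}$, but Remark \ref{2.5.y} and Lemma \ref{mlm1} supply the needed enlargement bound without recourse to a weighted argument.
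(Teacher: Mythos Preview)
Your necessity argument matches the paper's: split at level $i_0$, handle near/far pieces via the $L^q$-boundedness of $S$ and the pointwise decay estimate \eqref{lysaij1}, then invoke Lemma \ref{atlm2}, Remark \ref{2.5.y} and Lemma \ref{mlm1} exactly as in STEP 1 of Theorem \ref{atthm1}. (The paper uses Lemma \ref{mlm1} rather than Proposition \ref{mlmveq} here, since the estimates are taken in $\lv$, not $\wlv$; and it obtains the vanishing at infinity from Lemma \ref{lppro1} rather than from the atomic decomposition, but these are minor differences.)

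The sufficiency, however, has a real gap at the step you flag as the \textbf{main obstacle}. With your tent $\wz\boz_i:=\{(y,t):\ B(y,t)\subset\boz_i\}$, the fact that $(y,t)\notin\wz\boz_{i+1}$ only guarantees that $B(y,t)$ \emph{meets} $\boz_{i+1}^\com$; it gives no lower bound on $|B(y,t)\cap\boz_{i+1}^\com|$. Consequently the Fubini/duality reduction does not yield $\iint_{T_{i,j}}|f\ast\phi_t|^2\,\frac{dy\,dt}{t^{n+1}}\ls 2^{2i}|Q_{i,j}|$: when you unfold the cone integral you can only restrict the base point $x$ to $c Q_{i,j}$, where $S(f)(x)>2^i$ rather than $\le 2^{i+1}$, and the single point of $B(y,t)$ lying in $\boz_{i+1}^\com$ is useless for the $x$-integration. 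The tent-space duality you allude to suffers from the same defect: to bound $\|A((f\ast\phi_t)\chi_{T_{i,j}})\|_{L^q}$ you still need $A(\,\cdot\,)(x)\ls 2^i$ on the support, which your construction does not provide.

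The paper avoids this by \emph{not} using the Whitney decomposition of $\boz_i$. Instead (following \cite{lyj}) it selects dyadic cubes $Q$ with the density conditions $|Q\cap\boz_i|\ge|Q|/2$ and $|Q\cap\boz_{i+1}|<|Q|/2$, takes the maximal ones as $Q_{i,j}$, and builds the atoms from the dyadic ``boxes'' $Q^+$. The second density condition is exactly what produces, via \eqref{area-1}, a pointwise minorant $\chi_Q\ls 2\,\cm(\chi_{Q\cap(\wz\boz_i\setminus\boz_{i+1})})$ with $\wz\boz_i:=\{x:\cm(\chi_{\boz_i})\ge1/2\}\subset\rn$; one then transfers the $\ell^2$-sum over $Q$ to base points $x\in\boz_{i+1}^\com$, where $S(f)(x)\le 2^{i+1}$, and the Fefferman--Stein vector-valued inequality on $L^q(\rn)$ closes the estimate \eqref{area-2}. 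If you wish to keep a continuous-tent formulation, the fix is to replace your $\wz\boz_i$ by the density tent $\{(y,t):\ |B(y,t)\cap\boz_i|\ge|B(y,t)|/2\}$, after which the Fubini argument does go through; but as written your size bound on $b_{i,j}$ is unjustified.
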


\begin{thm}\label{lpthm2}
Let $p(\cdot)\in C^{\log}(\rn)$.
Then $f\in\whv$ if and only if
$f\in\cs'(\rn)$, $f$ vanishes weakly at infinity
and $g(f)\in\wlv$. Moreover, for all $f\in\whv$,
$$C^{-1}\|g(f)\|_{\wlv}\le\|f\|_{\whv}\le C\|g(f)\|_{\wlv},$$
where $C$ is a positive constant independent of $f$.
\end{thm}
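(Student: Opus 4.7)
The plan is to deduce Theorem \ref{lpthm2} from the Lusin area function characterization in Theorem \ref{lpthm1} by establishing the two-sided comparison
$$\|g(f)\|_{\wlv}\sim\|S(f)\|_{\wlv}$$
for any $f\in\cs'(\rn)$ that vanishes weakly at infinity. Together with Theorem \ref{lpthm1}, this immediately yields both the ``if and only if'' assertion and the quasi-norm equivalence, since the hypothesis of vanishing weakly at infinity appears symmetrically on each side.

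The comparison will be routed through the auxiliary Peetre-type square function
$$g_{a,\ast}(f)(x):=\left[\int_0^\infty\sup_{y\in\rn}\frac{|f\ast\phi_t(y)|^2}{(1+|y-x|/t)^{2a}}\,\frac{dt}{t}\right]^{1/2},$$
with the parameter $a$ chosen larger than $n/\underline{p}$, following the Ullrich strategy \cite{u12,lsuyy} alluded to in Section \ref{s-intro}. Two elementary pointwise bounds come for free: setting $y=x$ in the inner supremum gives $g(f)(x)\le g_{a,\ast}(f)(x)$, while restricting the integration in $S(f)^2$ to $\Gamma(x)$ (on which the Peetre weight satisfies $(1+|y-x|/t)^{-2a}\ge 2^{-2a}$) and averaging in $y$ over $B(x,t)$ yields $S(f)(x)\ls g_{a,\ast}(f)(x)$. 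The whole problem thus reduces to bounding $g_{a,\ast}(f)$ by each of $S(f)$ and $g(f)$ in the $\wlv$-quasi-norm.

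Both bounds should issue from the key technical Lemma \ref{lm-12.7}, which I expect to provide pointwise estimates of the type
$$g_{a,\ast}(f)(x)\ls\bigl[\cm\bigl((S(f))^r\bigr)(x)\bigr]^{1/r}\quad\text{and}\quad g_{a,\ast}(f)(x)\ls\bigl[\cm\bigl((g(f))^r\bigr)(x)\bigr]^{1/r}$$
for any $r\in(0,\underline{p})$ chosen so that $ar>n$, obtained via a Peetre/tangential maximal argument that makes essential use of the vanishing moment conditions \eqref{12.7.x2} on $\phi$ together with the Calder\'on reproducing formula supplied by \eqref{12.7.x3}. Taking $\wlv$-quasi-norms, Lemma \ref{mlmim} rewrites $\|[\cm(h^r)]^{1/r}\|_{\wlv}=\|\cm(h^r)\|_{WL^{p(\cdot)/r}(\rn)}^{1/r}$, and since $(p(\cdot)/r)_->1$, Corollary \ref{mc1} delivers boundedness of $\cm$ on $WL^{p(\cdot)/r}(\rn)$. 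Assembling the pointwise bounds with these norm estimates yields $\|g_{a,\ast}(f)\|_{\wlv}\ls\min\{\|S(f)\|_{\wlv},\|g(f)\|_{\wlv}\}$, and invoking Theorem \ref{lpthm1} to identify $\|S(f)\|_{\wlv}\sim\|f\|_{\whv}$ closes the argument.

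The main obstacle is the proof of Lemma \ref{lm-12.7}: one must coordinate the parameters $a$, $r$, and the moment order of $\phi$ so that the Peetre-type pointwise inequality genuinely holds and the resulting exponent $p(\cdot)/r$ remains in the range handled by Corollary \ref{mc1}. A secondary technicality is to justify a priori finiteness of $g_{a,\ast}(f)$, in order to legitimize the pointwise manipulations, and to verify that the hypothesis of vanishing weakly at infinity is preserved in both directions of the comparison so that Theorem \ref{lpthm1} may be applied freely at the end.
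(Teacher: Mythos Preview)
Your overall strategy---reducing to Theorem \ref{lpthm1} via the auxiliary function $g_{a,\ast}(f)$ and the Ullrich--type Lemma \ref{lm-12.7}---matches the paper. But the pointwise inequality
$$g_{a,\ast}(f)(x)\ls\bigl[\cm\bigl((g(f))^r\bigr)(x)\bigr]^{1/r}$$
that you expect to read off from Lemma \ref{lm-12.7} does not hold, and this is the real gap. Lemma \ref{lm-12.7} controls $[(\phi^*_{2^{-l}t}f)_a(x)]^r$ by a \emph{sum over $k\ge0$} of weighted integrals of $|(\phi_{2^{-(k+l)}})_t*f(y)|^r$. After integrating in $t$ and summing in $l$ to assemble $g_{a,\ast}(f)$, the convolution-type integral against $(1+2^l|x-y|)^{-ar}$ can indeed be handled by $\cm$, but you are then left with an $\ell^{2/r}$-sum in the scale variable $l$ of maximal functions applied to \emph{different} functions $F_{k,l}$, and $\cm$ does not commute with that $\ell^{2/r}$-norm. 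There is no way to collapse this to a single $\cm((g(f))^r)$ pointwise.

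What the paper actually does for $\|g_{a,\ast}(f)\|_{\wlv}\ls\|g(f)\|_{\wlv}$ is: decompose the integral in $y$ into dyadic annuli $|x-y|\sim 2^{i-l}$, recognize each piece as (up to a factor $2^{-i(ar-n)}$) a maximal function of $F_{k,l}:=\bigl[\int_1^2|(\phi_{2^{-(k+l)}})_t*f|^2\,dt/t\bigr]^{r/2}$, and then invoke the \emph{vector-valued} Fefferman--Stein inequality on $\wlv$ (Proposition \ref{mlmveq}) to control $\bigl\|\{\sum_l[\cm(F_{k,l})]^{2/r}\}^{r/2}\bigr\|_{WL^{p(\cdot)/r}(\rn)}$. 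The remaining sums in $k$ and $i$ are absorbed via the Aoki--Rolewicz quasi-triangle inequality (Remark \ref{r-ar}). So Corollary \ref{mc1} alone is not enough; Proposition \ref{mlmveq} is essential here.

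A second difference: the paper does \emph{not} prove $\|g_{a,\ast}(f)\|_{\wlv}\ls\|S(f)\|_{\wlv}$ at all. The direction $\|g(f)\|_{\wlv}\ls\|f\|_{\whv}$ is obtained directly from the atomic decomposition (Proposition \ref{lppro2}), mimicking the necessity proof of Theorem \ref{lpthm1}; only the direction $\|S(f)\|_{\wlv}\ls\|g(f)\|_{\wlv}$ is routed through $g_{a,\ast}$.
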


\begin{thm}\label{10.10.x}
Let $p(\cdot)\in C^{\log}(\rn)$ and $\lambda\in(1+\frac{2}{{\min\{p_-,2\}}}, \fz)$.
Then $f\in\whv$ if and only if $f\in\cs'(\rn)$, $f$ vanishes weakly at infinity and
$g_\lambda^{\ast}(f)\in\wlv$. Moreover, for all $f\in\whv$,
$$C^{-1}\|g_\lz^\ast(f)\|_{\wlv}\le\|f\|_{\whv}\le C\|g_\lz^\ast(f)\|_{\wlv},$$
where $C$ is a positive constant independent of $f$.
\end{thm}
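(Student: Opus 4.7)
The plan is to reduce Theorem \ref{10.10.x} to the Lusin area function characterization (Theorem \ref{lpthm1}) by pointwise comparing $g_\lz^\ast(f)$ and $S(f)$ in both directions.

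The sufficiency is straightforward: whenever $(y,t)\in\Gamma(x)$ we have $|y-x|<t$, so $\frac{t}{t+|x-y|}\ge\frac12$, and hence
\begin{align*}
[g_\lz^\ast(f)(x)]^2\ge 2^{-\lz n}[S(f)(x)]^2,
\end{align*}
which yields $S(f)\le 2^{\lz n/2}g_\lz^\ast(f)$ pointwise on $\rn$. Thus, if $f\in\cs'(\rn)$ vanishes weakly at infinity and $g_\lz^\ast(f)\in\wlv$, then $S(f)\in\wlv$, and Theorem \ref{lpthm1} gives $f\in\whv$ with $\|f\|_{\whv}\ls\|g_\lz^\ast(f)\|_{\wlv}$.

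For the converse, suppose $f\in\whv$. Theorem \ref{lpthm1} ensures that $f$ vanishes weakly at infinity and $\|S(f)\|_{\wlv}\sim\|f\|_{\whv}$. For $a\in[1,\fz)$, let $S_a(f)$ denote the Lusin area function with aperture $a$, namely, $\Gamma(x)$ is replaced by $\Gamma_a(x):=\{(y,t)\in\rnn:\ |y-x|<at\}$. Decomposing $\rnn$ dyadically as $\Gamma(x)\cup\bigcup_{k=1}^\fz[\Gamma_{2^k}(x)\setminus\Gamma_{2^{k-1}}(x)]$ and using $\frac{t}{t+|x-y|}\le 2^{-(k-1)}$ on the $k$th annulus, I would obtain
\begin{align*}
[g_\lz^\ast(f)(x)]^2\le[S(f)(x)]^2+\sum_{k=1}^\fz 2^{-(k-1)\lz n}[S_{2^k}(f)(x)]^2,
\end{align*}
and hence the pointwise bound $g_\lz^\ast(f)(x)\ls\sum_{k=0}^\fz 2^{-k\lz n/2}S_{2^k}(f)(x)$.

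The main obstacle is the change-of-aperture estimate in $\wlv$: for every $q\in(0,\min\{p_-,2\})$ and every $a\in[1,\fz)$,
\begin{align*}
\|S_a(f)\|_{\wlv}\ls a^{n/q}\|S(f)\|_{\wlv}.
\end{align*}
I would deduce this from the classical pointwise inequality $[S_a(f)(x)]^q\ls a^n\cm([S(f)]^q)(x)$ valid for $q\in(0,2]$ (obtained by averaging $|f\ast\phi_t(y)|^2$ over $B(y,t)\subset B(x,(a+1)t)$ and applying Fubini), combined with Lemma \ref{mlmim} and Corollary \ref{mc1} applied to the exponent $p(\cdot)/q$, which is admissible because $q<p_-$ forces $(p(\cdot)/q)_->1$. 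Combining the pointwise control of $g_\lz^\ast(f)$, this change-of-aperture estimate and Remark \ref{r-ar}, for the $v\in(0,1)$ of Aoki-Rolewicz,
\begin{align*}
\|g_\lz^\ast(f)\|_{\wlv}^v\ls\sum_{k=0}^\fz 2^{-kv\lz n/2}\|S_{2^k}(f)\|_{\wlv}^v\ls\lf[\sum_{k=0}^\fz 2^{kvn(1/q-\lz/2)}\r]\|S(f)\|_{\wlv}^v.
\end{align*}
The hypothesis $\lz>1+2/\min\{p_-,2\}$ in particular implies $\lz>2/\min\{p_-,2\}$, allowing one to choose $q\in(0,\min\{p_-,2\})$ close enough to $\min\{p_-,2\}$ so that $\lz/2>1/q$; the geometric series then converges and yields $\|g_\lz^\ast(f)\|_{\wlv}\ls\|f\|_{\whv}$, completing the proof.
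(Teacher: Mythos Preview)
Your sufficiency direction is correct and matches the paper's.

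For the necessity, the dyadic-cone decomposition $g_\lambda^*(f)\lesssim\sum_{k\ge0} 2^{-k\lambda n/2}S_{2^k}(f)$ is fine, but the change-of-aperture step has a genuine gap: the pointwise inequality $[S_a(f)(x)]^q\lesssim a^n\,\cm([S(f)]^q)(x)$ you invoke is \emph{false} for every $q>0$. Heuristically, place unit point masses for the measure $|f*\phi_t(y)|^2\,\frac{dy\,dt}{t^{n+1}}$ at $(y_k,t_k)$, $k=1,\dots,N$, with $t_k=2^{-k}$ and $|y_k-x|=(a/2)t_k$, the balls $B(y_k,t_k)$ arranged disjoint. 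Then $[S_a(f)(x)]^2=N$, while $S(f)$ is $\{0,1\}$-valued on $\bigcup_k B(y_k,t_k)$ and a direct computation gives $\cm([S(f)]^q)(x)\sim a^{-n}$ uniformly in $N$; hence the claimed bound fails for large $N$. The ``averaging $|f*\phi_t(y)|^2$ over $B(y,t)$ and Fubini'' trick yields only the global identity $\|S_a(f)\|_{L^2}^2=a^n\|S(f)\|_{L^2}^2$, not a pointwise control. Note also a red flag at the level of the conclusion: your argument, if it worked, would need only $\lambda>2/\min\{p_-,2\}$, strictly weaker than the stated hypothesis; the paper explicitly leaves that range open (see the Remark following Theorem \ref{10.10.x}), precisely because the classical route --- the distributional inequality $|\{S_a(f)>\alpha\}|\lesssim a^n|\{S(f)>c\alpha\}|$ --- does not transfer to $\|\chi_E\|_{\lv}$ when $p(\cdot)$ is nonconstant.

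The paper avoids change of aperture altogether. It observes that for $a\in(n/\min\{p_-,2\},\fz)$ with $\lambda>1+2a/n$ (this is where the extra ``$1+$'' in the hypothesis is used) one has the pointwise bound $g_\lambda^*(f)(x)\lesssim g_{a,*}(f)(x)$, and then appeals to the estimate $\|g_{a,*}(f)\|_{\wlv}\lesssim\|g(f)\|_{\wlv}$ established in the proof of Theorem \ref{lpthm2} via Lemma \ref{lm-12.7} and the vector-valued maximal inequality on $\wlv$ (Proposition \ref{mlmveq}).
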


\begin{rem}
In \cite[Theorem 4.13]{lyj}, Liang et al. established the $g_\lz^\ast$-function
characterization of the weak Hardy space $W\!H^p(\rn)$ with constant exponent
$p\in(0,1]$, as a special
case of the weak Musielak-Orlicz Hardy space $W\!H^\vz(\rn)$, with the best known range
for $\lz\in(2/p,\fz)$. However, it is still unclear whether or not the
$g_\lz^\ast$-function, when
$\lz\in(\frac{2}{\min\{p_-,2\}},1+\frac{2}{\min\{p_-,2\}}]$,
can characterize $\whv$, since the method used in the proof of
Theorem \ref{10.10.x} does not work in this case, while the method used in
\cite[Theorem 4.13]{lyj} strongly depends on the properties of uniformly
Muckenhoupt weights, which are not satisfied by $t^{p(\cdot)}$
with $p(\cdot)\in C^{\log}(\rn)$ (see Remark \ref{1.29.x2}(iii)).
\end{rem}

To prove Theorems \ref{lpthm1}, \ref{lpthm2} and \ref{10.10.x},
we need some technical lemmas.

\begin{lem}\label{lppro1}
Let $p(\cdot)\in C^{\log}(\rn)$. If $f\in\whv$,
then $f$ vanishes weakly at infinity.
\end{lem}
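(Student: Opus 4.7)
The plan is to show that, for every $\phi\in\cs(\rn)$, $\|f\ast\phi_t\|_{L^\fz(\rn)}\to0$ as $t\to\fz$. Once this is established, for any $\psi\in\cs(\rn)\subset L^1(\rn)$,
$$|\la f\ast\phi_t,\psi\ra|\le\|f\ast\phi_t\|_{L^\fz(\rn)}\|\psi\|_{L^1(\rn)}\to0,$$
so $f\ast\phi_t\to0$ in $\cs'(\rn)$ as $t\to\fz$, i.e., $f$ vanishes weakly at infinity.

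The first step is a pointwise comparison: for every $\phi\in\cs(\rn)$, there exists $C_\phi\in(0,\fz)$ such that, for all $t\in(0,\fz)$, $x\in\rn$ and $y\in B(x,t)$,
$$|f\ast\phi_t(x)|\le C_\phi\,f_{N,+}^\ast(y).$$
To see this, I would write $\phi_t(x-z)=\wz\phi_t(y-z)$, where $\wz\phi(u):=\phi(u+\tfrac{x-y}{t})$ is a translate of $\phi$ by a vector of length less than $1$. Since $(1+|u|)^N\le2^N(1+|u+\eta|)^N$ whenever $|\eta|<1$, the seminorm governing membership in $\cf_N(\rn)$ (see \eqref{2.4x}) changes only by the factor $2^N$ under such translations; hence a suitable positive multiple of $\wz\phi$ lies in $\cf_N(\rn)$, and the claim follows from the definition \eqref{2.8x} of $f_{N,+}^\ast(y)$.

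The second step converts this into a quantitative $L^\fz$-bound. Fix $x\in\rn$ with $|f\ast\phi_t(x)|>0$ and set $\az:=|f\ast\phi_t(x)|/(2C_\phi)$. The first step gives $B(x,t)\subset\{y\in\rn:\ f_{N,+}^\ast(y)>\az\}$, so by the definition of $\|\cdot\|_{\wlv}$ in \eqref{wvlp},
$$\az\lf\|\chi_{B(x,t)}\r\|_{\lv}\le\az\lf\|\chi_{\{y\in\rn:\ f_{N,+}^\ast(y)>\az\}}\r\|_{\lv}\le\|f\|_{\whv},$$
which rearranges to $|f\ast\phi_t(x)|\le 2C_\phi\|f\|_{\whv}/\|\chi_{B(x,t)}\|_{\lv}$.

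The third step is a uniform lower bound for $\|\chi_{B(x,t)}\|_{\lv}$ as $t\to\fz$. A direct computation with the modular $\varrho_{p(\cdot)}$ shows that $\|\chi_{B(x,1)}\|_{\lv}\ge\min\{|B(0,1)|^{1/p_-},|B(0,1)|^{1/p_+}\}=:c_0>0$, uniformly in $x\in\rn$. Then, applying Lemma \ref{zhuolemma} to the nested pair $B(x,1)\subset B(x,t)$ with $t\ge1$ yields
$$\lf\|\chi_{B(x,t)}\r\|_{\lv}\ge C^{-1}t^{n/p_+}\lf\|\chi_{B(x,1)}\r\|_{\lv}\ge C^{-1}c_0\,t^{n/p_+}.$$
Combining with the second step, $\|f\ast\phi_t\|_{L^\fz(\rn)}\ls\|f\|_{\whv}t^{-n/p_+}\to0$ as $t\to\fz$, completing the plan. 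The main technical point I expect to require care is the translation-invariance check for $\cf_N(\rn)$ under shifts of size less than $1$ in the first step; the remainder is a clean combination of the defining inequality for $\wlv$, Lemma \ref{zhuolemma}, and the uniform lower bound on $\|\chi_{B(x,1)}\|_{\lv}$ obtained from the modular.
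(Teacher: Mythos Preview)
Your proof is correct and follows essentially the same strategy as the paper: both start from the pointwise inclusion $B(x,t)\subset\{y:\ f_{N,+}^\ast(y)\gtrsim|f\ast\phi_t(x)|\}$ (the paper routes this through the non-tangential grand maximal function via Remark~\ref{r-m-fun}(i), while you verify it directly by a translation argument on $\cf_N(\rn)$, which is fine). The only real difference is in extracting the decay: the paper reuses the modular computation behind \eqref{max-f3} to bound $\min\{|f\ast\phi_t(x)|^{p_-},|f\ast\phi_t(x)|^{p_+}\}$ by a constant times $|B(x,t)|^{-1}$, whereas you use the definition of $\|\cdot\|_{\wlv}$ directly to get $|f\ast\phi_t(x)|\lesssim\|f\|_{\whv}/\|\chi_{B(x,t)}\|_{\lv}$ and then invoke Lemma~\ref{zhuolemma} together with the uniform lower bound on $\|\chi_{B(x,1)}\|_{\lv}$. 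Your route is slightly more transparent and avoids the min/max bookkeeping; the paper's route avoids Lemma~\ref{zhuolemma} in favor of the raw volume. Either way the conclusion $\|f\ast\phi_t\|_{L^\fz(\rn)}\to0$ follows, and your passage from this to $f\ast\phi_t\to0$ in $\cs'(\rn)$ is correct.
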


\begin{proof}
Let $f\in\whv$. Then, by Remark \ref{r-m-fun}(i), we know that,
for any $\phi\in\cs(\rn)$, $t\in(0, \fz)$, $x\in\rn$ and $y\in B(x, t)$,
$|f*{\phi_t}(x)|\ls f_{N,\triangledown}^\ast(y)\ls f_{N,+}^\ast(y)$,
where $N\in(\frac{n}{\underline{p}}+n+1,\fz)$ with $\underline{p}$ as in \eqref{2.1y}.
Thus, there exists a positive constant
$C_0$, independent of $x$, $t$ and $f$, such that
$$B(x, t)\subset\{y\in\rn:\
f_{N,+}^\ast(y)\ge C_0|f*{\phi_t}(x)|\}.$$
By this, Remark \ref{r-vlp}(ii) and an argument similar to that used in the
proof of \eqref{max-f3}, we conclude that, for all $x\in\rn$,
\begin{align*}
&\min\{\lf|f*{\phi_t}(x)\r|^{p_-}, \lf|f*{\phi_t}(x)\r|^{p_+}\}\\
&\hs\ls\frac{1}{|B(x,t)|}\max\lf\{\|f\|_{\whv}^{p_-},\|f\|
_{\whv}^{p_+}\r\}\rightarrow0
\end{align*}
as $t\rightarrow\fz$, which implies that $f$ vanishes weakly at infinity,
where $p_+$ and $p_-$ are as in \eqref{2.1x}.
This finishes the proof of Lemma \ref{lppro1}.
\end{proof}

The following inequality of the Lusin area function on classical
Lebesgue spaces is well known,
whose proof can be found, for example, in \cite[Chapter 7]{fs82} (see also \cite{st89}).

\begin{lem}\label{lm-9.22}
Let $q\in(1,\fz)$. Then there exists a positive constant $C$ such that,
for all $f\in L^q(\rn)$,
$$C^{-1}\|f\|_{L^q(\rn)}\le\|S(f)\|_{L^q(\rn)}\le C\|f\|_{L^q(\rn)}.$$
\end{lem}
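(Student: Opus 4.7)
The plan is to derive both inequalities by combining Plancherel's theorem, standard Calder\'on-Zygmund theory, and a duality argument based on the Calder\'on reproducing formula \eqref{12.7.x3}.

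For the upper bound, I would first handle $q=2$ directly. Unfolding the $L^2$ norm of $S(f)$, interchanging the order of integration, and using $|\{x\in\rn:\ |x-y|<t\}|\sim t^n$ give
\[
\|S(f)\|_{L^2(\rn)}^2\sim\int_\rn\int_0^\fz|f\ast\phi_t(y)|^2\,\frac{dt}{t}\,dy,
\]
so, by Plancherel's theorem combined with \eqref{12.7.x3}, one gets $\|S(f)\|_{L^2(\rn)}\sim\|f\|_{L^2(\rn)}$. To extend the upper bound to an arbitrary $q\in(1,\fz)$, I would view $f\mapsto\{f\ast\phi_t(y)\}_{t>0}$ as a vector-valued Calder\'on-Zygmund operator whose kernel takes values in $\mathcal{H}:=L^2((0,\fz),dt/t)$; the smoothness of $\phi$ together with \eqref{12.7.x1} and \eqref{12.7.x2} ensures that this kernel satisfies the H\"ormander condition in the $\mathcal{H}$-norm. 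The vector-valued Calder\'on-Zygmund theorem then upgrades the $L^2$-estimate just proved to the $L^q$-boundedness of the associated $g$-function, and this is in turn equivalent to the $L^q$-bound for $S(f)$ via the classical good-$\lambda$ inequality comparing $S$ with the Hardy-Littlewood maximal function (see, for example, \cite[Chapter 7]{fs82} or \cite{st89}).

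For the reverse inequality, I would argue by duality. Formula \eqref{12.7.x3} yields, after a routine approximation, the polarization identity
\[
\int_\rn f(x)\overline{g(x)}\,dx
=\int_\rn\int_0^\fz f\ast\phi_t(y)\,\overline{g\ast\phi_t(y)}\,\frac{dt}{t}\,dy
\]
for sufficiently regular $f$ and $g$. Averaging the integrand over $x\in B(y,t)$ (harmless because it is independent of $x$) and swapping the order of integration, Cauchy--Schwarz applied pointwise in $x$ gives
\[
\lf|\int_\rn f(x)\overline{g(x)}\,dx\r|\ls\int_\rn S(f)(x)\,S(g)(x)\,dx
\le\|S(f)\|_{L^q(\rn)}\|S(g)\|_{L^{q'}(\rn)},
\]
where $q'$ denotes the conjugate exponent of $q$. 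Applying the already-proved upper bound with exponent $q'$ to $g$ and taking the supremum over $g\in L^{q'}(\rn)$ with $\|g\|_{L^{q'}(\rn)}\le 1$ yields $\|f\|_{L^q(\rn)}\ls\|S(f)\|_{L^q(\rn)}$.

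The main technical hurdle is the rigorous setup of the vector-valued Calder\'on-Zygmund machinery and the verification of the H\"ormander condition for the $\mathcal{H}$-valued kernel; however, since Lemma \ref{lm-9.22} is a classical fact used here only as a black box, I would simply invoke \cite[Chapter 7]{fs82} and \cite{st89} rather than reproduce the standard argument in detail.
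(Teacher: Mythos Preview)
Your proposal is correct and in fact more detailed than what the paper does: the paper treats Lemma~\ref{lm-9.22} as a known black box and simply cites \cite[Chapter~7]{fs82} and \cite{st89} without any argument. Your sketch of the Plancherel $+$ vector-valued Calder\'on--Zygmund $+$ duality route is exactly the standard proof found in those references, so your final sentence---invoking the cited sources rather than reproducing the argument---aligns precisely with the paper's treatment.
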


\begin{proof}[Proof of Theorem \ref{lpthm1}]
We first prove the sufficiency.
Let $f\in\cs'(\rn)$, $f$ vanish
weakly at infinity and $S(f)\in\wlv$. Then
we prove that $f\in \wha$ for some $q$ and $s$ as in
Theorem \ref{atthm1} and
\begin{equation}\label{area-4}
\|f\|_{\whv}\sim\|f\|_{\wha}\ls\|S(f)\|_{\wlv}.
\end{equation}

Denote by $\cq$ the set of all dyadic cubes in $\rn$.
For any $i\in\zz$, let $$\Omega_i:=\{x\in\rn:\ S(f)(x)>2^i\}$$
and
$$\cq_i:=\lf\{Q\in\cq:\ |Q\cap\Omega_i|\ge\frac{|Q|}{2}\ {\rm and}\
|Q\cap\Omega_{i+1}|<\frac{|Q|}{2}\r\}.$$
For any $i\in\zz$, we use $\{\qij\}_{j}$ to denote the maximal dyadic cubes in $\cq_i$, namely,
there does not exist $Q\in\cq_i$ such that $\qij\subsetneqq Q$. For any $Q\in\cq$, let
$\ell(Q)$ denote its side length and
$$Q^+:=\{(y,t)\in\mathbb R^{n+1}_+:\ y\in Q,\, \sqrt n{\ell(Q)}<t\le
 2\sqrt n\ell(Q)\}$$
and, for all $i\in\zz$ and $j$, let
$$\Bij:=\bigcup_{Q\in\cq_i,\,Q\st Q_{i,j}}Q^+.$$
Here we point out that $Q^+$ for different $Q\in \cq_i$ and $Q\subset Q_{i,j}$
are mutually disjoint.
Then, by the proof of \cite[Theorem 4.5]{lyj}, we find that
\begin{equation}\label{area-3}
f=:\sum_{i\in\zz}\sum_{j}\lz_{i,j}\aij\quad \mbox{in}\quad \cs'(\rn),
\end{equation}
where, for any $i\in\zz$, $j$ and $x\in\rn$,
$\lz_{i,j}:=2^i\|\chi_{4\sqrt nQ_{i,j}}\|_{\lv}$,
\begin{align*}
\aij(x):=&\frac1{\lz_{i,j}}\int_{\Bij}f*\phi_t(y)\phi_t(x-y)\frac{dy\,dt}{t}\\
=&\frac1{\lz_{i,j}}\sum_{Q\in\cq_i,Q\st \qij}\int_{Q^+}f*\phi_t(y)
\phi_t(x-y)\frac{\,dy\,dt}{t}
=:\frac1{\lz_{i,j}}\sum_{Q\in\cq_i,Q\st \qij}e_Q(x),
\end{align*}
and $\phi$ is as in \eqref{12.7.x1}, \eqref{12.7.x2}
and \eqref{12.7.x3}.
Moreover, for any $i\in\zz$ and $j$,
$\supp\aij\subset\wz{Q}_{i,j}:=4{\sqrt n}\qij$ and
$$\int_{\rn}\aij(x)x^{\beta} dx=0\quad\mbox{for}\quad|\beta|\le s.$$

Next we estimate $\|\aij\|_{L^q(\rn)}$ for all $i\in\zz$ and $j$.
Let
$$\wz\boz_i:=\lf\{x\in\rn:\ \cm(\chi_{\boz_i})(x)\ge \frac{1}{2}\r\}.$$
Observe that $|Q\cap \Omega_i|\ge \frac{|Q|}2$ for any $Q\in \cq_i$, which implies
that $Q\st \wz \Omega_i$. From this and the fact that
$|Q\cap \Omega_{i+1}|<\frac{|Q|}2$, we deduce that, for all $i\in\zz$
and $x\in Q\in\cq_i$,
\begin{equation}\label{area-1}
\cm\lf(\chi_{Q\cap(\wz\boz_i\setminus\boz_{i+1})}\r)(x)\ge \frac{\chi_Q(x)}{2}.
\end{equation}
For any $Q\in\cq_i$, let
$$c_Q:=\lf[\int_{Q^+}|\phi_t*f(y)|^2\frac{dy\,dt}{t^{n+1}}\r]^{1/2}.$$
Notice that, for all $i\in\zz$, $j$ and $x\in\rn$,
\begin{equation*}
S(a_{i,j})(x)\ls \frac1{\lz_{i,j}}
\lf\{\sum_{Q\in\cq_i,Q\st\qij}\lf[\cm(c_Q\chi_Q)(x)\r]^2
\r\}^{\frac12}
\end{equation*}
(see \cite[(4.9)]{lyj}). From this, Lemma \ref{lm-9.22}, \eqref{area-1}
and the Fefferman-Stein
vector-valued inequality (see, for example, \cite[p.\,51, Theorem 1(c)]{stein93}),
we deduce that, for all $i\in\zz$ and $j$,
\begin{align}\label{area-2}
\|\aij\|_{L^q(\rn)}
&\ls\|S(\aij)\|_{L^q(\rn)}
\ls\frac1{\lz_{i,j}}\lf\|\lf\{\sum_{Q\in\cq_i,Q\st \qij}
\lf[\cm(c_Q\chi_Q)\r]^2\r\}^{1/2}
\r\|_{L^q(\rn)}\noz\\
&\ls\frac1{\lz_{i,j}}\lf\|\lf[\sum_{Q\in\cq_i,Q\st \qij}
(c_Q)^2\chi_Q\r]^{1/2}
\r\|_{L^q(\rn)}\noz\\
&\ls\frac1{\lz_{i,j}}\lf\|\lf\{\sum_{Q\in\cq_i,Q\st \qij}
\lf[c_Q^2\cm\lf(\chi_{Q\cap(\wz\boz_i\setminus\boz_{i+1})}\r)\r]^2\r\}^{1/2}
\r\|_{L^q(\rn)}\noz\\
&\ls\frac1{\lz_{i,j}}\lf\|\lf[\sum_{Q\in\cq_i,Q\st \qij}
(c_Q)^2\chi_{Q\cap(\wz\boz_i\setminus\boz_{i+1})}\r]^{1/2}
\r\|_{L^q(\rn)}.
\end{align}
Since, for all $x\in Q\st\qij$, if $(y,t)\in Q^+$, then
$|x-y|<\sqrt n \ell(Q)\le t$, it follows that
$Q^+\st \Gamma(x)$, which, combined with the fact that
$\{Q^+:\ Q\in \cq_i,\ Q\st \qij\}_{i\in\zz,j}$ are disjoint,
further implies that, for all $i\in\zz$, $j$ and $x\in\rn$,
\begin{align*}
\sum_{Q\in\cq_i,Q\st \qij}
(c_Q)^2\chi_{Q\cap(\wz\boz_i\setminus\boz_{i+1})}(x)
&=\sum_{Q\in\cq_i,Q\st \qij}\int_{Q^+}|\phi_t*f(y)|^2\frac{dy\,dt}{t^{n+1}}
\chi_{Q\cap(\wz\boz_i\setminus\boz_{i+1})}(x)\\
&\le [S(f)(x)]^2\chi_{\qij\cap(\wz\boz_i\setminus\boz_{i+1})}(x)
\ls 2^{2i}\chi_{\qij}(x).
\end{align*}
Thus, by this and \eqref{area-2}, we conclude that, for all $i\in\zz$ and $j$,
\begin{equation*}
\|a_{i,j}\|_{L^q(\rn)}\ls\frac1{\lz_{i,j}}\lf\|2^i\chi_{\qij}\r\|_{L^q(\rn)}
\ls\frac{|\wz{Q}_{i,j}|^{\frac{1}{q}}}{\|\chi_{\wz{Q}_{i,j}}\|_{\lv}}.
\end{equation*}
Therefore, for any $i\in\zz$ and $j$, $a_{i,j}$ is a $(p(\cdot),q,s)$-atom
up to a harmless constant multiple
and hence \eqref{area-3} forms an atomic decomposition of $f$.

On the other hand, by Remark \ref{2.5.y},
$\lf|\qij\cap\boz_i\r|\ge\frac{\lf|\qij\r|}{2}$,
Lemma \ref{zhuolemma} and the fact that $\{\qij\}_{j}$
have disjoint interiors, we find that, for any $i\in\zz$,
\begin{align*}
&\lf\|\lf\{\sum_{j}\lf[\frac{\lz_{i,j}\chi_{\wz{Q}_{i,j}}}
{\|\chi_{\wz{Q}_{i,j}}\|_{\lv}}\r]
^{\underline{p}}\r\}^{\frac1{\underline{p}}}\r\|_{\lv}\\
&\hs\sim2^i\lf\|\lf(\sum_{j}
\chi_{\wz{Q}_{ij}}\r)^{\frac1{\underline{p}}}\r\|_{\lv}
\ls2^i\lf\|\lf(\sum_{j}
\chi_{\qij}\r)^{\frac1{\underline{p}}}\r\|_{\lv}\\
&\hs\ls 2^i\lf\|\lf(\sum_{j}
\chi_{\qij\cap\boz_i}\r)^{\frac1{\underline{p}}}\r\|_{\lv}
\ls2^i\|\chi_{\boz_i}\|_{\lv}\ls\|S(f)\|_{\wlv},
\end{align*}
which, together with Theorem \ref{atthm1}, implies that $f\in\wha=\whv$
and \eqref{area-4} holds true.
This finishes the proof of the sufficiency of Theorem \ref{lpthm1}.

Next we prove the necessity of Theorem \ref{lpthm1}.
Let $f\in \whv$. Obviously, by Lemma \ref{lppro1}, we know that
$f$ vanishes weakly at infinity. Due to Theorem \ref{atthm1},
we can decompose $f$ as follows
$$f=\sum_{i=-\fz}^{i_0-1}\sum_{j\in\nn}\lij\aij
+\sum_{i=i_0}^{\fz}\sum_{j\in\nn}\lij\aij=:f_1+f_2,$$
where $\{\lz_{i,j}\}_{i\in\zz,j\in\nn}$ and $\{a_{i,j}\}_{i\in\zz,j\in\nn}$
are as in Theorem \ref{atthm1} satisfying \eqref{4.1.y}.
Thus, we obtain
\begin{align}\label{lydecom}
&\lf\|\chi_{\{x\in\rn:\ S(f)(x)>\az\}}\r\|_{\lv}\noz\\
&\hs\ls\lf\|\chi_{\{x\in\rn:\ S(f_1)(x)>\frac{\az}{2}\}}\r\|_{\lv}
+\lf\|\chi_{\{x\in A_{i_0}:\ S(f_2)(x)>\frac{\az}{2}\}}\r\|_{\lv}\noz\\
&\hs\hs\hs+\lf\|\chi_{\{x\in(A_{i_0})^{\com}:\ S(f_2)(x)
>\frac{\az}{2}\}}\r\|_{\lv}\noz\\
&\hs=:{\rm I}_1+{\rm I}_2+{\rm I}_3,
\end{align}
where $A_{i_0}:=\bigcup_{i={i_0}}^\fz\bigcup_{j\in\nn}(4\Bij)$
and $\{\Bij\}_{i\in\zz,j\in\nn}$ are the balls as in Theorem \ref{atthm1}.

It is easy to see that
\begin{align}\label{lydecom1}
{\rm I}_1&\ls\lf\|\chi_{\{x\in\rn:\ \sum_{i=-\fz}
^{i_0-1}\sum_{j\in\nn}\lij S(\aij)(x)\chi_{4\Bij}(x)
>\frac{\az}{4}\}}\r\|_{\lv}\noz\\
&\hs\hs+\lf\|\chi_{\{x\in\rn:\ \sum_{i=-\fz}^{i_0-1}
\sum_{j\in\nn}\lij S(\aij)(x)\chi_{(4\Bij)^\com}(x)
>\frac{\az}{4}\}}\r\|_{\lv}\noz\\
&=:{\rm I}_{1,1}+{\rm I}_{1,2}.
\end{align}

For ${\rm I}_{1,1}$, by Lemmas \ref{lm-9.22} and \ref{atlm2}, Remark \ref{2.5.y}
and an argument similar to that
used in the proof of \eqref{eqatomi11}, we conclude that
\begin{align}\label{area-5}
{\rm I}_{1,1}\ls\az^{-1}\|f\|_{\wha}.
\end{align}

For ${\rm I}_{1,2}$, we first write that, for any $i\in\zz$, $j\in\nn$ and $x\in\rn$,
\begin{align}\label{lysaij}
\hs\hs\hs\lf[S(\aij)(x)\r]^2
&=\int_0^{\frac{|x-\xij|}{4}}
\int_{|y-x|<t}|\aij\ast\phi_t(y)|^2\dytn
+\int_{\frac{|x-\xij|}{4}}^\infty\int_{|y-x|<t}\cdots\noz\\
&=:\mj_1+\mj_2,
\end{align}
where $\xij$ denotes the center of $\Bij$.
From the Taylor remainder theorem, we deduce that, for
all $i\in\zz$, $j\in\nn$, $N\in\zz_+$, $t\in(0, \fz)$,
$x\in (4\Bij)^{\com}$, $|y-x|<t$ and $z\in\Bij$,
\begin{align}\label{lyphi}
\lf|\phi\lf(\frac{y-z}{t}\r)-\sum_{|\alpha|\le s}
\frac{\partial^\alpha\phi(\frac{y-\xij}{t})}{\alpha!}
\lf(\frac{\xij-z}{t}\r)^\alpha\r|\ls
\lf(\frac{t}{|\xi|}\r)^{N}\frac{|z-\xij|^{s+1}}{t^{s+1}},
\end{align}
where $\xi=(y-\xij)+\theta(\xij-z)$ for some $\theta\in[0,1]$.

When $x\in (4\Bij)^{\com}$ and $|y-x|<t\le\frac{|x-\xij|}4$, we have
$|y-\xij|\sim|x-\xij|$ and, in this case,
$|\xi|\ge |y-\xij|-|\xij-z|\ge \frac12|x-\xij|$.
Thus, by this and the vanishing moment condition of $\aij$,
\eqref{lyphi} in the case that $N=n+s+2$ and the H\"older inequality,
we know that, for all $i\in\zz$, $j\in\nn$, $x\in (4\Bij)^{\com}$
and $|y-x|<t\le\frac{|x-\xij|}4$,
\begin{align*}
\lf|\aij\ast\phi_t(y)\r|
&\ls t\int_\Bij|\aij(z)|\frac{|z-\xij|^{s+1}}
{|x-\xij|^{n+s+2}}\,dz
\ls\frac{t(\rij)^{s+1}}{|x-\xij|^{n+s+2}}\|\aij\|
_{L^q(\rn)}|\Bij|^{1/{q'}}\\
&\ls\frac{t}{|x-\xij|}\lf(\frac{\rij}{|x-\xij|}\r)^{n+s+1}\|
\chi_{\Bij}\|_{\lv}^{-1}.
\end{align*}
From this, we further deduce that, for all $i\in\zz$, $j\in\nn$
and $x\in(4\Bij)^{\com}$,
\begin{align}\label{lyesti}
\mj_1&\ls
\|\chi_{\Bij}\|_{\lv}^{-2}\lf(\frac{\rij}{|x-\xij|}\r)^{2(n+s+1)}
\frac{1}{|x-\xij|^2}\int_0^{\frac{|x-\xij|}{4}}t\,dt\noz\\
&\sim\|\chi_{\Bij}\|_{\lv}^{-2}
\lf(\frac{\rij}{|x-\xij|}\r)^{2(n+s+1)}\noz\\
&\ls\|\chi_{\Bij}\|_{\lv}^{-2}
\lf[\cm\lf(\chi_{\Bij}\r)(x)\r]^{\frac{2(n+s+1)}{n}}.
\end{align}

When $t\geq \frac{|x-\xij|}{4}$, by \eqref{lyphi}
in the case that $N=0$ and the H\"older inequality,
together with the vanishing moment condition of $\aij$,
we also find that,
for all $i\in\zz$, $j\in\nn$, $x\in (4\Bij)^{\complement}$
and $|y-x|<t$,
\begin{align*}
\lf|\aij\ast\phi_t(y)\r|
\ls \lf(\frac{\rij}{t}\r)^{n+s+1}\|\chi_{\Bij}\|
_{\lv}^{-1},
\end{align*}
which implies that
\begin{align}\label{lyesti2}
\hs\hs\mj_2&\ls\|\chi_{\Bij}\|_{\lv}^{-2}(\rij)^{2(n+s+1)}
 \int_{\frac{|x-\xij|}{4}}^\infty t^{-2(n+s+1)-1}\,dt \noz\\
&\sim\|\chi_{\Bij}\|_{\lv}^{-2}\lf(\frac{\rij}
{|x-\xij|}\r)^{2(n+s+1)}\noz\\
&\ls\|\chi_{\Bij}\|_{\lv}^{-2}
\lf[\cm\lf(\chi_{\Bij}\r)(x)\r]^{\frac{2(n+s+1)}{n}}.
\end{align}
Thus, by \eqref{lysaij}, \eqref{lyesti} and \eqref{lyesti2},
we conclude that, for all
$i\in\zz$, $j\in\nn$ and $x\in(4\Bij)^\com$,
\begin{align}\label{lysaij1}
|S(\aij)(x)|\ls\|\chi_{\Bij}\|_{\lv}^{-1}
\lf[\cm\lf(\chi_{\Bij}\r)(x)\r]^{\frac{n+s+1}{n}}.
\end{align}
From this, the H\"older inequality, Remark \ref{r-vlp}(i),
Lemma \ref{mlm1} and an argument similar to that used in
the proof of \eqref{eqatomi12}, we deduce that
${\rm I}_{1,2}\ls\az^{-1}\|f\|_{\wha}$.
Combining this, \eqref{lydecom1} and \eqref{area-5}, we further conclude that
\begin{align}\label{lye1}
\az{\rm I}_1\ls\|f\|_{\wha}.
\end{align}

By an argument similar to that used in
the proof of \eqref{eqatomii}, we also find that
\begin{align}\label{lyeerr}
{\rm I}_2
\ls\|\chi_{A_{i_0}}\|_{\lv}\ls\az^{-1}\|f\|_{\wha}.
\end{align}
Let $r_2\in(\frac{n}{\underline{p}(n+s+1)},1)$. Then, by \eqref{lysaij1},
Lemma \ref{mlm1} and an argument similar to that used
in the proof of \eqref{eqatomiii}, we know that
\begin{align}\label{lyeez}
{\rm I}_3&
=\lf\|\chi_{\{x\in{(A_{i_0})^{\com}}:\
S(f_2)(x)>\frac{\az}{2}\}}\r\|_{\lv}\noz\\
&\ls\lf\|\lf[\frac{\sum_{i=i_0}^
\fz\sum_{j\in\nn}\lij S(\aij)}
{\az}\r]^{r_2}\chi_{(A_{i_0})^\com}\r\|_{\lv}\noz\\
&\ls\az^{-r_2}\lf[\sum_{i=i_0}^\fz2^{ir_2}\r.\noz\\
&\hs\hs\times\lf.\lf
\|\lf\{\sum_{j\in\nn}\lf[\cm\lf(\chi_{\Bij}\r)\r]
^{\frac{r_2(n+s+1)}{n}}\r\}^{\frac{n}{r_2(n+s+1)}}
\r\|_{L^{\frac{r_2(n+s+1)}{n}p(\cdot)}(\rn)}\r]
^{\frac{r_2(n+s+1)}{n}}\noz\\
&\ls\az^{-1}\|f\|_{\wha}.
\end{align}

Finally, combining \eqref{lydecom}, \eqref{lye1},
\eqref{lyeerr} and \eqref{lyeez}, we conclude that
$$\|S(f)\|_{\wlv}\ls\|f\|_{\whv},$$
which completes the proof of the necessity and hence the proof of Theorem \ref{lpthm1}.
\end{proof}

By an argument similar to that used in the proof
of the necessity part of Theorem \ref{lpthm1}, we obtain the following
boundedness of the Littlewood-Paley $g$-function from
$\whv$ to $\wlv$, the details being omitted.

\begin{prop}\label{lppro2}
Let $p(\cdot)\in C^{\log}(\rn)$.
If $f\in\whv$, then $g(f)\in\wlv$ and
$$\|g(f)\|_{\wlv}\le C\|f\|_{\whv},$$
where $C$ is a positive constant independent of $f$.
\end{prop}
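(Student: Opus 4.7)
The plan is to mimic almost verbatim the necessity part of Theorem \ref{lpthm1}, replacing the Lusin area function $S$ by the Littlewood-Paley $g$-function throughout. Given $f\in\whv$, I would invoke Theorem \ref{atthm1} to fix an atomic decomposition $f=\sum_{i\in\zz}\sum_{j\in\nn}\lz_{i,j}a_{i,j}$ in $\cs'(\rn)$ with $\{a_{i,j}\}$ being $(p(\cdot),q,s)$-atoms associated with balls $\{B_{i,j}\}_{i\in\zz,j\in\nn}$, satisfying the controls recorded in Remark \ref{atomlm3}. For any fixed $\az\in(0,\fz)$, I would choose $i_0\in\zz$ with $2^{i_0}\le\az<2^{i_0+1}$, split $f=f_1+f_2$ according to whether $i<i_0$ or $i\ge i_0$, introduce the enlarged set $A_{i_0}:=\bigcup_{i\ge i_0}\bigcup_{j\in\nn}(4B_{i,j})$, and decompose
\begin{align*}
\lf\|\chi_{\{x\in\rn:\,g(f)(x)>\az\}}\r\|_{\lv}
&\ls\lf\|\chi_{\{g(f_1)>\az/2\}}\r\|_{\lv}
+\lf\|\chi_{\{x\in A_{i_0}:\,g(f_2)(x)>\az/2\}}\r\|_{\lv}\\
&\hs+\lf\|\chi_{\{x\in(A_{i_0})^\com:\,g(f_2)(x)>\az/2\}}\r\|_{\lv}
=:{\rm I}_1+{\rm I}_2+{\rm I}_3,
\end{align*}
exactly paralleling \eqref{lydecom}.

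To handle ${\rm I}_1$, I would further split by restricting to $4B_{i,j}$ and its complement, as in \eqref{lydecom1}. The near-part ${\rm I}_{1,1}$ is treated via the $L^q(\rn)$ boundedness of $g$ (a classical result analogous to Lemma \ref{lm-9.22}), followed by Lemma \ref{atlm2}, Remark \ref{2.5.y}, and Remark \ref{atomlm3}, reproducing the estimate \eqref{area-5}. The far-part ${\rm I}_{1,2}$ requires the pointwise bound
\begin{align*}
g(a_{i,j})(x)\ls\|\chi_{B_{i,j}}\|_{\lv}^{-1}
\lf[\cm(\chi_{B_{i,j}})(x)\r]^{(n+s+1)/n}
\quad\text{for }x\in(4B_{i,j})^\com,
\end{align*}
which is the analogue of \eqref{lysaij1}. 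I expect to obtain it by exploiting the compact support of $\phi$ (which forces $a_{i,j}\ast\phi_t(x)=0$ whenever $t<|x-\xij|/2$, since then $\phi_t(x-z)=0$ for every $z\in B_{i,j}$) and, for $t\ge|x-\xij|/4$, by using the vanishing moments of $a_{i,j}$ together with the Taylor remainder estimate to obtain $|a_{i,j}\ast\phi_t(x)|\ls t^{-(n+s+1)}(\rij)^{n+s+1}\|\chi_{\Bij}\|_{\lv}^{-1}$, after which integrating the square in $dt/t$ from $|x-\xij|/4$ to $\fz$ yields the claimed bound. Once this pointwise estimate is in hand, ${\rm I}_{1,2}$ is controlled by $\az^{-1}\|f\|_{\wha}$ via the H\"older inequality, Remark \ref{r-vlp}(i), and Lemma \ref{mlm1}, reproducing \eqref{eqatomi12}.

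For ${\rm I}_2$, I would simply use ${\rm I}_2\le\|\chi_{A_{i_0}}\|_{\lv}$ and bound the right-hand side by Remark \ref{2.5.y} and the atomic quasi-norm, as in \eqref{lyeerr}. For ${\rm I}_3$, I would again deploy the pointwise estimate displayed above, choose an index $r_2\in(n/[\underline p(n+s+1)],1)$, and mimic \eqref{lyeez} via Lemma \ref{mlm1} and Remark \ref{2.5.y}. Putting the three estimates together, taking the supremum over $\az\in(0,\fz)$, and appealing to the equivalence $\|f\|_{\whv}\sim\|f\|_{\wha}$ from Theorem \ref{atthm1} yields $\|g(f)\|_{\wlv}\ls\|f\|_{\whv}$. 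The main technical point, and the only place requiring genuine adaptation rather than literal transcription of the Lusin-area argument, is verifying the pointwise majorization of $g(a_{i,j})$ by a maximal-function power outside $4\Bij$; the rest is a mechanical translation of the scheme built around \eqref{lydecom}--\eqref{lyeez}.
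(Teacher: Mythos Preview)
Your proposal is correct and matches the paper's approach exactly: the paper proves Proposition \ref{lppro2} by stating that it follows from ``an argument similar to that used in the proof of the necessity part of Theorem \ref{lpthm1}, the details being omitted,'' which is precisely the scheme you outline. Your observation that $a_{i,j}\ast\phi_t(x)$ vanishes for small $t$ when $x\in(4B_{i,j})^\complement$ (by disjoint supports) is in fact a pleasant simplification over the $\mathrm{J}_1$ estimate needed in the Lusin case, where the cone integration forces a genuine Taylor-with-decay bound.
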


To prove Theorem \ref{lpthm2}, we borrow some ideas from Ullrich \cite{u12}
and begin with the following notation.
For any $\phi\in\cs(\rn)$ and $f\in\cs'(\rn)$,
we let, for all $t,\ a\in(0,\fz)$ and $x\in\rn$,
$$(\phi^*_tf)_a(x):=\sup_{y\in\rn}\frac{|\phi_t*f(x+y)|}{(1+|y|/t)^a}$$
and
\begin{equation}\label{1.21-x}
g_{a,*}(f)(x):=\lf\{\int_{0}^{\fz}[(\phi^*_tf)_a(x)]^2\frac{dt}{t}\r\}^{1/2},
\end{equation}
where $\phi_t(\cdot):=\frac{1}{t^n}\phi(\frac{\cdot}{t})$.

The following estimate is a special case of \cite[Lemma 3.5]{lsuyy},
which is further traced back to \cite[(2.66)]{u12} and the argument
used in the proof of \cite[Theorem 2.6]{u12}.

\begin{lem}\label{lm-12.7}
Let $\phi\in\cs(\rn)$ satisfy
\eqref{12.7.x1}, \eqref{12.7.x2} and \eqref{12.7.x3} and $N_0\in\nn$. Then, for all
$t\in[1,2]$, $a\in(0,N_0]$, $l\in\zz$, $f\in\cs'(\rn)$ and $x\in\rn$, it holds true that
\begin{equation}\label{1.22x}
\lf[(\phi^*_{2^{-l}t}f)_a(x)\r]^r\le C_{(N_0,r)}\sum_{k=0}^{\fz}2^{-kN_0r}
2^{(k+l)n}\int_{\rn}\frac{|(\phi_{2^{-(k+l)}})_t*f(y)|^r}{(1+2^l|x-y|)^{ar}}\,dy,
\end{equation}
where $r$ is an arbitrary fixed positive number and $C_{(N_0,r)}$ a positive
constant independent of $\phi$, $l$, $t$, $f$ and $x$, but may depend on $N_0$ and $r$.
\end{lem}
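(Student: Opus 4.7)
The plan is to prove \eqref{1.22x} by expanding $\phi_{2^{-l}t}*f$ via a discrete Calder\'on reproducing formula, controlling the resulting convolution kernels through the almost-orthogonality coming from the vanishing moments of $\phi$, and then transferring the Peetre weight $(1+2^l|y|)^{-a}$ onto the integration variable $z$ by an elementary triangle-inequality manipulation.

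\textbf{Step 1 (Calder\'on decomposition).} Using \eqref{12.7.x3} together with a dyadic decomposition of the $s$-integral, I would write, for every $w\in\rn$,
$$\phi_{2^{-l}t}*f(w)=\sum_{k=0}^{\fz}\int_{\rn}K_{k,l,t}(w-z)\,(\phi_{2^{-(k+l)}})_t*f(z)\,dz,$$
where each kernel $K_{k,l,t}$ is (up to harmless renormalization) a convolution of $\phi_{2^{-l}t}$ with a Schwartz building block at scale $2^{-(k+l)}$ obtained by chopping the identity \eqref{12.7.x3}. The ``far-scale'' terms coming from $s\gg 2^{-l}$ are absorbed into the $k=0$ piece by a standard rearrangement, and the identity holds in $\cs'(\rn)$ modulo polynomials, which are annihilated by the $\phi_{2^{-(k+l)}}$-convolutions thanks to the vanishing moments \eqref{12.7.x2}.

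\textbf{Step 2 (Kernel estimate).} For $k\ge 0$, Taylor-expanding the larger-scale factor in $K_{k,l,t}$ and using the vanishing moments of the smaller-scale factor (inherited from $\phi$ via the Calder\'on construction) produces, for each $M\in\nn$,
$$|K_{k,l,t}(w)|\le C_{(M,N_0)}\,2^{-kN_0}\,\frac{2^{(k+l)n}}{(1+2^{k+l}|w|)^M}.$$
Plugging this into the decomposition from Step 1 and setting $w=x+y$ yields
$$|\phi_{2^{-l}t}*f(x+y)|\le C\sum_{k=0}^{\fz}2^{-kN_0}\int_{\rn}\frac{2^{(k+l)n}\,|(\phi_{2^{-(k+l)}})_t*f(z)|}{(1+2^{k+l}|x+y-z|)^M}\,dz.$$

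\textbf{Step 3 (Weight transfer and passage to $r$-th powers).} Dividing by $(1+2^l|y|)^a$ and invoking the elementary bounds
$$1+2^l|x-z|\le(1+2^l|x+y-z|)(1+2^l|y|),\qquad 1+2^l|x+y-z|\le 1+2^{k+l}|x+y-z|,$$
I can replace $(1+2^l|y|)^{-a}$ by $(1+2^{k+l}|x+y-z|)^{a}(1+2^l|x-z|)^{-a}$; choosing $M\ge ar+n+1$ absorbs the numerator factor into the denominator. Raising to the $r$-th power, moving the power inside the sum over $k$ (via sub-additivity when $r\le 1$, or H\"older against the normalized kernel when $r>1$), then inside the $z$-integral using that $\int_{\rn}2^{(k+l)n}(1+2^{k+l}|\cdot|)^{-n-1}\,dz\ls 1$, and finally taking the supremum over $y\in\rn$ delivers \eqref{1.22x}.

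The main obstacle will be Step 2: I must ensure that the building block paired with $\phi$ in the Calder\'on decomposition carries enough vanishing moments for the factor $2^{-kN_0}$ to hold \emph{uniformly} in the prescribed parameter $N_0$, and I must justify the unconditional convergence of the reproducing identity for general $f\in\cs'(\rn)$. Both issues are resolved by the construction underlying \cite[Lemma 3.5]{lsuyy} and \cite[(2.66)]{u12}, which provides a partner Schwartz function whose Fourier transform is supported in a fixed annulus and whose moment-cancellation properties drive the decay estimate above; invoking that construction as a black box reduces Lemma \ref{lm-12.7} to the three steps outlined here.
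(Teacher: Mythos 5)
Your proposal and the paper diverge in approach. The paper's proof is remarkably short: it makes no attempt to reprove the Ullrich--Rychkov-type estimate, but merely verifies that a radial $\phi$ satisfying \eqref{12.7.x1}--\eqref{12.7.x3} fulfills the two hypotheses of \cite[Lemma 3.5]{lsuyy}, namely that $D^\gz\widehat\phi(\vec0_n)=0$ for the relevant $\gz$ (from the moment condition \eqref{12.7.x2}) and that $|\widehat\phi|>0$ on some annulus $B(\vec0_n,|\xi_0|+\delta)\setminus B(\vec0_n,|\xi_0|-\delta)$ (from \eqref{12.7.x3} together with continuity and radial symmetry of $\widehat\phi$). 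The cited lemma is then invoked verbatim. Your plan, by contrast, attempts to re-derive that cited lemma from scratch via Calder\'on decomposition, almost-orthogonality kernel estimates, and Peetre weight transfer. That is an instructive and more self-contained route, but it is not what the paper does.

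There is, moreover, a genuine gap in your Step 1. The discretization of \eqref{12.7.x3} yields
$\sum_{j\in\zz}\int_1^2|\widehat\phi(\xi\,2^{-j}s)|^2\,\frac{ds}{s}=1$, with $j$ running over \emph{all} of $\zz$; indices $j<l$ give building blocks at scales \emph{coarser} than $2^{-l}$, and these cannot simply be re-labelled into the $k=0$ term. The decay you get from almost-orthogonality on these coarse terms is governed by the number of vanishing moments of $\phi$ itself, which by \eqref{12.7.x2} is only $\max\{\lfloor n/p_--n-1\rfloor,0\}$, nowhere near the prescribed $N_0$; so a naive "absorb the far scales" move destroys the uniform-in-$N_0$ decay you need. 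The actual construction in \cite{u12}/\cite{lsuyy} avoids this entirely by building a local (Rychkov-type) reproducing identity
$\phi_{2^{-l}t}*f=\sum_{k\ge0}\Lambda_{k}*(\phi_{2^{-(k+l)}})_t*f$
where \emph{only} finer scales appear and $\Lambda_0$ has an integral of one rather than vanishing moments — a qualitatively different decomposition than a straight dyadic chop of \eqref{12.7.x3}. Since you already intend to invoke that construction as a black box, your three steps are essentially a paraphrase of the cited proof with one incorrect intermediate claim; the cleaner and correct strategy is the paper's: check the two hypotheses of \cite[Lemma 3.5]{lsuyy} and apply it.
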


\begin{proof}
By \eqref{12.7.x2}, we know that, for all $\gz\in\zz_+^n$ with
$|\gz|\le\max\{\lfloor\frac{n}{p_-}-n-1\rfloor,0\}$,
$$D^{\gz}\widehat\phi(\vec0_n)=\int_{\rn}(-2\pi i\xi)^{\gz}\phi(\xi)\,d\xi
=(-2\pi i)^{|\gz|}\int_{\rn}\xi^{\gz}\phi(\xi)\,d\xi=0,$$
where $p_-$ is as in \eqref{2.1x}.

On the other hand, since $\phi$ satisfies \eqref{12.7.x3}, it follows
that there exists $\xi_0\in\rn\setminus\{\vec0_n\}$ such that
$|\widehat{\phi}(\xi_0)|>0$. From this and
the continuity of $\widehat\phi$, we deduce that
there exists $\delta\in(0,\fz)$ such that
$\vec0_n\notin B(\xi_0,\delta)$ and,
for all $\xi\in B(\xi_0,\delta)$, $|\widehat{\phi}(\xi)|>0$.
By this, combined with the fact that $\widehat{\phi}$
is radial due to $\phi$ being radial,
we further conclude that,
$$\mbox{for\ all}\quad \xi\in B(\vec0_n,|\xi_0|+\delta)\backslash B(\vec0_n,|\xi_0|-\delta),\quad
|\widehat{\phi}(\xi)|>0.$$
Thus, if a radial Schwartz function $\phi$ satisfies
\eqref{12.7.x1}, \eqref{12.7.x2} and \eqref{12.7.x3},
then $\phi$ also satisfies the assumptions in \cite[Lemma 3.5]{lsuyy}.
Therefore, by \cite[Lemma 3.5]{lsuyy}, we find that \eqref{1.22x} holds true,
which completes the proof of Lemma \ref{lm-12.7}.
\end{proof}

We now prove Theorem \ref{lpthm2}.

\begin{proof}[Proof of Theorem \ref{lpthm2}]
For any $f\in\whv$, by Lemma \ref{lppro1} and Proposition \ref{lppro2},
we know that $f\in\cs'(\rn)$, $f$ vanishes weakly at infinity and $g(f)\in\wlv$.
Thus, to prove Theorem \ref{lpthm2},
by Theorem \ref{lpthm1}, it suffices to show that, for any $f\in\cs'(\rn)$, which
vanishes weakly at infinity, it holds true that
\begin{align}\label{lyine}
\|S(f)\|_{\wlv}\ls\|g(f)\|_{\wlv}.
\end{align}
To this end, let $f\in\cs'(\rn)$ vanish weakly at infinity. It is easy to know that,
for any $a\in(0,\fz)$ and almost every $x\in\rn$,
$S(f)(x)\ls g_{a,*}(f)(x)$.
Thus, to prove \eqref{lyine}, it suffices to show that
\begin{align}\label{elypo}
\lf\|g_{a,*}(f)\r\|_{\wlv}\ls\lf\|g(f)\r\|_{\wlv}
\end{align}
holds true for some $a\in(\frac{n}{\min\{p_-,2\}},\fz)$. We now prove \eqref{elypo}.
Since $a\in(\frac{n}{\min\{p_-,2\}},\fz)$,
it follows that there exists $r\in\lf(0,{\min\{p_-,2\}}\r)$
such that $a\in(\frac{n}{r},\fz)$. Choosing $N_0$ sufficiently large,
then, by Lemma \ref{lm-12.7} and the
Minkowski integral inequality, we find that, for all $x\in\rn$,
\begin{align*}
g_{a,*}(f)(x)
&=\lf\{\sum_{j\in\zz}\int_1^2[(\phi_{2^{-j}t}^*f)_a(x)]^2\frac{dt}{t}\r\}^{1/2}\\
&\ls\lf\{\sum_{j\in\zz}\int_1^2\lf[\sum_{k=0}^{\fz}2^{-kN_0r}2^{(k+j)n}
\int_{\rn}\frac{|(\phi_{2^{-(k+j)}})_t*f(y)|^r}{(1+2^j|x-y|)^{ar}}\,dy\r]
^{\frac{2}{r}}\frac{dt}{t}\r\}^{1/2}\\
&\ls\lf[\sum_{j\in\zz}\lf\{\sum_{k=0}
^{\fz}2^{-kN_0r}2^{(k+j)n}\int_{\rn}
\frac{[\int_1^2|\lf(\phi_{2^{-(k+j)}}\r)_t*f(y)|^2\frac{dt}{t}]
^{\frac{r}{2}}}{(1+2^j|x-y|)^{ar}}\,dy\r\}^{\frac{2}{r}}\r]^{1/2},
\end{align*}
which, together with Lemma \ref{mlmim} and Remarks \ref{r-ar} and \ref{10.24.x1},
implies that
\begin{align*}
&\lf\|g_{a,*}(f)\r\|_{\wlv}^{rv}\\
&\hs\ls\lf\|\sum_{k=0}
^{\fz}2^{-k(N_0r-n)}\lf[\sum_{j\in\zz}2^{j\frac{2n}{r}}
\lf\{\int_{\rn}\frac{[\int_1^2|\lf(\phi_{2^{-(k+j)}}\r)_t*f(y)|
^2\frac{dt}{t}]^{\frac{r}{2}}}{(1+2^j|\cdot-y|)^{ar}}\,dy\r\}
^{\frac{2}{r}}\r]^{\frac{r}{2}}\r\|
_{WL^{\frac{p(\cdot)}{r}}(\rn)}^v\\
&\hs\ls\sum_{k=0}
^{\fz}2^{-kv(N_0r-n)}\lf\|\lf[\sum_{j\in\zz}2^{j\frac{2n}{r}}
\lf\{\int_{\rn}\frac{[\int_1^2|\lf(\phi_{2^{-(k+j)}}\r)_t*f(y)|
^2\frac{dt}{t}]^{\frac{r}{2}}}{(1+2^j|\cdot-y|)^{ar}}\,dy\r\}
^{\frac{2}{r}}\r]^{\frac{r}{2}}\r\|
_{WL^{\frac{p(\cdot)}{r}}(\rn)}^v\\
&\hs\ls\sum_{k=0}
^{\fz}2^{-kv(N_0r-n)}\\
&\hs\hs\times\!\lf\|\lf[\sum_{j\in\zz}2^{j\frac{2n}{r}}
\lf\{\sum_{i=0}^{\fz}2^{-iar}
\int_{|\cdot-y|\sim2^{i-j}}\lf[\int_1^2|\lf(\phi_{2^{-(k+j)}}\r)_t*f(y)|
^2\frac{dt}{t}\r]^{\frac{r}{2}}\,dy\r\}
^{\frac{2}{r}}\r]^{\frac{r}{2}}\r\|
_{WL^{\frac{p(\cdot)}{r}}(\rn)}^v,
\end{align*}
where $v$ is as in Remark \ref{r-ar} and
$|\cdot-y|\sim2^{i-j}$ means that $|x-y|<2^{-j}$ when $i=0$,
or $2^{i-j-1}\le|x-y|<2^{i-j}$ when $i\in\nn$. Applying the Minkowski
series inequality, Proposition \ref{mlmveq}, and Remarks \ref{r-ar}
and \ref{10.24.x1}, we conclude that
\begin{align*}
\lf\|g_{a,*}(f)\r\|_{\wlv}^{rv}
&\ls\sum_{k=0}^{\fz}2^{-kv(N_0r-n)}\lf\|\sum_{i=0}^{\fz}2^{-iar+in}\r.\\
&\hs\times\lf.\lf\{\sum_{j\in\zz}
\lf[\cm\lf(\lf[\int_1^2|\lf(\phi_{2^{-(k+j)}}\r)_t*f|
^2\frac{dt}{t}\r]^{\frac{r}{2}}\r)\r]
^{\frac{2}{r}}\r\}^{\frac{r}{2}}\r\|
_{WL^{\frac{p(\cdot)}{r}}(\rn)}^v\\
&\ls\sum_{k=0}^{\fz}2^{-kv(N_0r-n)}\\
&\hs\times\sum_{i=0}^{\fz}2^{(-iar+in)v}
\lf\|\lf\{\sum_{j\in\zz}
\lf[\int_1^2|\lf(\phi_{2^{-(k+j)}}\r)_t*f|
^2\frac{dt}{t}\r]\r\}^{\frac{r}{2}}\r\|
_{WL^{\frac{p(\cdot)}{r}}(\rn)}^v\\
&\ls\lf\|g(f)\r\|_{\wlv}^{rv},
\end{align*}
which implies that \eqref{elypo} holds true.
This finishes the proof of Theorem \ref{lpthm2}.
\end{proof}

Applying Theorems \ref{lpthm1} and \ref{lpthm2}, we now prove Theorem \ref{10.10.x}.
\begin{proof}[Proof of Theorem \ref{10.10.x}]
To prove this theorem, we only need to show the necessity,
since the sufficiency is easy because of Theorem \ref{lpthm1} and
the obvious fact that, for all $f\in\cs'(\rn)$ and
$x\in\rn$, $S(f)(x)\le g_\lz^{\ast}(f)(x)$.

To show the necessity, for any $f\in\whv$, by Lemma \ref{lppro1}, we know that
$f$ vanishes weakly at infinity. From the fact that
$\lambda\in(1+\frac{2}{{\min\{p_-,2\}}}, \fz)$,
we deduce that there exists
$a\in(\frac{n}{{\min\{p_-,2\}}},\fz)$ such that $\lz\in(1+\frac{2a}{n},\fz)$
and, for all $x\in\rn$,
\begin{align*}
g_\lambda^{\ast}(f)(x)&=\lf[\int_0^\infty\int_{\mathbb  R^n}
\lf(\frac{t}{t+|x-y|}\r)^{\lz n}
\lf|f\ast\phi_t(y)\r|^2\,\frac{\,dy\,dt}{t^{n+1}}\r]^{1/2}\\
&\ls\lf\{\int_0^{\fz}[(\phi_t^*f)_a(x)]^2\int_{\rn}\lf(1+\frac{|x-y|}{t}\r)
^{2a-\lz n}\frac{dy\,dt}{t^{n+1}}\r\}^{1/2}\\
&\sim\lf\{\int_0^{\fz}[(\phi_t^*f)_a(x)]^2
\frac{dt}{t}\r\}^{1/2}\sim g_{a,*}(f)(x),
\end{align*}
which, together with \eqref{elypo} and Theorem \ref{lpthm2},
implies that
$$\|g_\lambda^{\ast}(f)\|_{\wlv}\ls\|f\|_{\whv}.$$
This finishes the proof of Theorem \ref{10.10.x}.
\end{proof}

\section{Boundedness of Calder\'on-Zygmund operators\label{s-bou}}
\hskip\parindent
In this section, as an application of the variable weak Hardy space $\whv$, we
establish
the boundedness of the Calder\'on-Zygmund operators from the variable Hardy
space $\hv$ to $\whv$. We begin with recalling the definition of
the variable Hardy space $\hv$ (see \cite[Definition 1.1]{ns12}).

\begin{defn}
Let $p(\cdot)\in C^{\log}(\rn)$ and $N\in(\frac{n}{p_-}+n+1,\fz)\cap\nn$
with $p_-$ as in \eqref{2.1x}.
The \emph{variable Hardy space} $\hv$ is defined to be the set of all
$f\in\cs'(\rn)$ such that the (quasi-)norm
$$\|f\|_{\hv}:=\|f_{N,+}^\ast\|_{\lv}<\fz,$$
where $f_{N,+}^\ast$ is as in \eqref{2.8x}.
\end{defn}

Recall that the \emph{variable atomic Hardy space}
$\ha$ is defined as
the space of all $f\in\cs'(\rn)$ such that
$f=\sum_{j\in\nn}\lz_j a_j$ in $\cs'(\rn)$, where $\{\lz_j\}_{j\in\nn}$
is a sequence of non-negative numbers, $\{a_j\}_{j\in\nn}$ is a sequence
of $(p(\cdot),q,s)$ atoms, associated with balls $\{B_j\}_{j\in\nn}$ of $\rn$,
satisfying that
\begin{enumerate}
\item[{\rm (i)}] $\supp a_j \st B_j$;

\item[{\rm (ii)}] $\|a_j\|_{L^q(\rn)}\le\frac{|B_j|^{1/q}}{\|\chi_{B_j}\|_{\lv}}$;

\item[{\rm (iii)}] $\int_{\mathbb R^n}a_j(x)x^\az\,dx=0$ for all $\az\in{\zz}_+^n$
with $|\az|\le s$.
\end{enumerate}

Moreover, for any $f\in\ha$, let
$$\|f\|_{\ha}=\inf\lf\{\lf\|\lf\{\sum_{j\in\nn}
\lf[\frac{\lz_j\chi_{B_j}}{\|\chi_{B_j}\|_{\lv}}\r]
^{\underline{p}}\r\}^{\frac{1}{\underline{p}}}\r\|_{\lv}\r\},$$
where $\underline{p}$ is as in \eqref{2.1y} and the infimum is taken over all
admissible decompositions of $f$ as above.

The following lemma is just \cite[Theorem 1.1]{s10}.

\begin{lem}\label{blm1}
Let $p(\cdot)\in C^{\log}(\rn)$, $q\in [1,\fz]\cap (p_+,\fz]$ and
$s\in(\frac{n}{p_-}-n-1,\fz)\cap{\zz}_+$, where $p_+$ and $p_-$ are as in \eqref{2.1x}.
Then $\hv=\ha$ with equivalent quasi-norms.
\end{lem}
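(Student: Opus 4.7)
The plan is to prove the two embeddings $\ha \hookrightarrow \hv$ and $\hv \hookrightarrow \ha$ separately, with matching quasi-norm control.

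For $\ha \hookrightarrow \hv$, I would take an arbitrary atomic decomposition $f = \sum_{j\in\nn}\lz_j a_j$ in $\cs'(\rn)$, where each $a_j$ is a $(p(\cdot),q,s)$-atom supported in a ball $B_j$, and then bound the radial grand maximal function $(a_j)^{\ast}$ pointwise in two regions. On $2B_j$, using $q>p_+$ and the fact that $(a_j)^{\ast}\ls\cm(a_j)$, the $L^q$-normalization of the atom gives an $L^q$-bound that combines with Lemma \ref{atlm2} once I sum over $j$. On $(2B_j)^{\complement}$, I would invoke the vanishing-moment condition plus a Taylor expansion of the test function, as in \eqref{11.14.x2}, to obtain
\[
(a_j)^{\ast}(x)\ls\|\chi_{B_j}\|_{\lv}^{-1}[\cm(\chi_{B_j})(x)]^{(n+s+1)/n},\qquad x\in(2B_j)^{\complement}.
\]
Raising both estimates to the power $r\in(0,\underline{p})$ (chosen so that $r(n+s+1)/n>1$) and applying the Fefferman--Stein vector-valued maximal inequality of Lemma \ref{mlm1} on $L^{p(\cdot)/r}(\rn)$ would yield $\|f_{N,+}^{\ast}\|_{\lv}\ls\|f\|_{\ha}$.

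For the harder embedding $\hv\hookrightarrow\ha$, I would adapt the classical Calder\'on--Zygmund construction to the variable setting. Given $f\in\hv$, set $\boz_i:=\{x\in\rn:\ f_{N,+}^{\ast}(x)>2^i\}$ for each $i\in\zz$ and, using the Whitney decomposition, write $\boz_i=\bigcup_j Q_{i,j}$ with $\{Q_{i,j}\}_j$ having pairwise disjoint interiors and side lengths comparable to $\dist(Q_{i,j},\boz_i^{\complement})$. Choosing a smooth partition of unity subordinate to $\{C Q_{i,j}\}_j$ and the standard projection onto polynomials of degree at most $s$, I would build a ``good-bad'' splitting $f=g_i+\sum_j b_{i,j}$ at level $2^i$ with $\|b_{i,j}\|_{L^\fz(\rn)}\ls 2^i$, $\supp b_{i,j}\subset CQ_{i,j}$, and $\int b_{i,j}(x)x^{\alpha}\,dx=0$ for $|\alpha|\le s$. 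Differencing at consecutive levels, $g_{i+1}-g_i$ produces atoms $a_{i,j}$ associated with balls $B_{i,j}$ comparable to $CQ_{i,j}$, normalized so that $\|a_{i,j}\|_{L^q(\rn)}\le|B_{i,j}|^{1/q}/\|\chi_{B_{i,j}}\|_{\lv}$ and paired with coefficients $\lz_{i,j}\sim 2^i\|\chi_{B_{i,j}}\|_{\lv}$, producing $f=\sum_{i,j}\lz_{i,j}a_{i,j}$ in $\cs'(\rn)$.

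The main obstacle will be twofold. First, establishing the convergence of the telescoping series in $\cs'(\rn)$: unlike $\whv$, for $\hv$ the subspace $L_{\loc}^1(\rn)\cap\hv$ is dense (as used in \cite{ns12} and noted in Section \ref{s-atom}), so a density argument reducing the problem to $L_{\loc}^1(\rn)$-distributions is available, after which the Calder\'on reproducing formula (Lemma \ref{l-1.24x}) or direct pointwise arguments give convergence. Second, translating the pointwise construction into the $\lv$-quasinorm estimate
\[
\lf\|\lf\{\sum_{i,j}\lf[\frac{\lz_{i,j}\chi_{B_{i,j}}}{\|\chi_{B_{i,j}}\|_{\lv}}\r]^{\underline{p}}\r\}^{1/\underline{p}}\r\|_{\lv}\ls\|f\|_{\hv},
\]
for which I would use that $Q_{i,j}\subset\wz\boz_i:=\{x\in\rn:\ \cm(\chi_{\boz_i})(x)\ge 1/2\}$ (so that, via Lemma \ref{mlm1} and Remark \ref{2.5.y}, $\|\sum_j\chi_{B_{i,j}}\|_{\lv}\ls\|\chi_{\boz_i}\|_{\lv}$) and then sum in $i$ using a layer-cake identity relating $\sum_i 2^{i\underline{p}}\chi_{\boz_i\setminus\boz_{i+1}}$ to $[f_{N,+}^{\ast}]^{\underline{p}}$. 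Once these two steps are in place, the proof closes in the same spirit as Step 2 of the proof of Theorem \ref{atthm1}, but without the additional outer level set $\{|f^{\ast}|>\az\}$ that complicates the weak version.
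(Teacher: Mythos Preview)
The paper does not prove this lemma at all: it is quoted verbatim as \cite[Theorem 1.1]{s10} (Sawano's atomic decomposition theorem for $\hv$), with no argument supplied. So there is nothing in the paper to compare your proposal against line by line.

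That said, your sketch is a faithful outline of the standard proof of this cited result. For $\ha\hookrightarrow\hv$ your near-ball/far-ball split, the $L^q$-control via $(a_j)^{\ast}\ls\cm(a_j)$ combined with Lemma \ref{atlm2}, and the Taylor--moment estimate \eqref{11.14.x2} feeding into Lemma \ref{mlm1}, are exactly the ingredients Sawano uses. For $\hv\hookrightarrow\ha$, the Calder\'on--Zygmund/Whitney construction at dyadic levels, followed by telescoping $g_{i+1}-g_i$, is the route taken in \cite{ns12,s10}; your observation that density of $L^1_{\loc}(\rn)\cap\hv$ in $\hv$ is available here (in contrast to the weak space) is the correct structural point, and your layer-cake computation $\sum_i 2^{i\underline{p}}\chi_{\boz_i}=\sum_k\chi_{\boz_k\setminus\boz_{k+1}}\sum_{i\le k}2^{i\underline{p}}\sim[f_{N,+}^{\ast}]^{\underline{p}}$ is the right way to close the quasi-norm estimate. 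One minor caution: in the Nakai--Sawano/Sawano argument the atoms at each level are built directly from the bad parts $b_{i,j}$ (with a polynomial correction across overlapping Whitney cubes at consecutive levels), rather than literally from $g_{i+1}-g_i$; the bookkeeping there is somewhat delicate, but your description captures the essential mechanism.
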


\begin{rem}\label{07-22}
Let $p(\cdot)\in C^{\log}(\rn)$ and $p_+\in(0,1]$. Then, from the proof of
\cite[Theorem 4.5]{ns12}, we deduce that the subspace $H^{p(\cdot)}(\rn)\cap L^2(\rn)$
is dense in $H^{p(\cdot)}(\rn)$.
\end{rem}

Recall that, for any given $\delta\in(0,1]$, a \emph{convolutional $\delta$-type
Calder\'on-Zygmund operator} $T$ means that: $T$ is a linear bounded operator on
$L^2(\rn)$ with kernel $k\in\cs'(\rn)$ coinciding with a locally integrable function on
$\rn\backslash \{\vec0_n\}$ and satisfying that, for all $x$, $y\in\rn$ with $|x|>2|y|$,
$$|k(x-y)-k(x)|\le C\frac{|y|^\delta}{|x|^{n+\delta}}$$
and, for all $f\in L^2(\rn)$, $Tf(x)=k*f(x).$

The first main result of this section reads as follows.

\begin{thm}\label{bdnthm2}
Let $p(\cdot):\ \rn\to(0,1]$ belong to $C^{\log}(\rn)$ and $\delta\in(0,1]$.
Let $T$ be a convolutional $\delta$-type
Calder\'on-Zygmund operator.
If $p_-\in[\frac{n}{n+\delta},1]$ with $p_-$ as in \eqref{2.1x}, then
$T$ has a unique extension on $H^{p(\cdot)}(\rn)$ and, moreover,
for all $f\in\hv$,
$$\|Tf\|_{\whv}\le C\|f\|_{H^{p(\cdot)}(\rn)},$$
where $C$ is a positive constant independent of $f$.
\end{thm}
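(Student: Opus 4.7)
My plan is to combine the atomic characterization of $\hv$ from Lemma \ref{blm1} with pointwise molecule-type estimates on $Ta_j$, and then to reduce the critical case to a weak-type vector-valued maximal inequality. First, by Remark \ref{07-22}, $H^{p(\cdot)}(\rn)\cap L^2(\rn)$ is dense in $H^{p(\cdot)}(\rn)$, so it suffices to prove $\|Tf\|_{\whv}\ls\|f\|_\hv$ for $f\in H^{p(\cdot)}(\rn)\cap L^2(\rn)$, since then $Tf\in L^2(\rn)\subset\cs'(\rn)$ is well-defined and a unique continuous extension to $\hv$ follows automatically. For such $f$, pick $q\in(1,\fz)$ and $s=0$ (legal because $p_-\ge n/(n+\delta)\ge n/(n+1)$ forces $\lfloor n/p_--n\rfloor\le 0$); Lemma \ref{blm1} then yields $f=\sum_{j\in\nn}\lz_j a_j$ in $\cs'(\rn)$ with each $a_j$ a $(p(\cdot),q,0)$-atom supported in a ball $B_j=B(x_j,r_j)$ and
$$\lf\|\lf\{\sum_{j\in\nn}\lf(\frac{\lz_j\chi_{B_j}}{\|\chi_{B_j}\|_{\lv}}\r)^{\underline p}\r\}^{1/\underline p}\r\|_{\lv}\sim\|f\|_\hv.$$

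Second, I would derive the usual pointwise bounds for $(Ta_j)^\ast_{N,+}$: on $2B_j$, the $L^q$-boundedness of $T$ together with $(Ta_j)^\ast_{N,+}\ls\cm(Ta_j)$ gives $\|(Ta_j)^\ast_{N,+}\chi_{2B_j}\|_{L^q(\rn)}\ls|B_j|^{1/q}\|\chi_{B_j}\|_{\lv}^{-1}$, while on $(2B_j)^\complement$ the vanishing integral $\int a_j=0$ and the $\delta$-H\"older regularity of the convolution kernel yield, via a Taylor-remainder argument analogous to \eqref{11.14.x2},
$$(Ta_j)^\ast_{N,+}(x)\ls\|\chi_{B_j}\|_{\lv}^{-1}\lf(\frac{r_j}{|x-x_j|}\r)^{n+\delta}\ls\|\chi_{B_j}\|_{\lv}^{-1}\lf[\cm(\chi_{B_j})(x)\r]^{(n+\delta)/n}.$$
Now for each $\az\in(0,\fz)$, pick $i_0\in\zz$ with $2^{i_0}\le\az<2^{i_0+1}$, partition the index set $\{j\}$ into near-level and far-level sets according to whether $2^i\sim\lz_j/\|\chi_{B_j}\|_{\lv}$ is below or above $\az$, and split $Tf=Tf_1+Tf_2$ accordingly. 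Mimicking STEP 1 of the proof of Theorem \ref{atthm1}, I would bound $\|\chi_{\{(Tf)^\ast_{N,+}>\az\}}\|_{\lv}$ by three pieces: a local piece estimated via the $L^q$-bound and the Fefferman-Stein vector-valued inequality (Lemma \ref{mlm1}); a support-size piece controlled by Remark \ref{2.5.y}; and a far-field piece treated through powers of $\cm(\chi_{B_j})$.

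The main obstacle is the critical sub-case $p_-=n/(n+\delta)$, where the exponent $(n+\delta)/n=1/p_-$ in the far-field estimate coincides exactly with the borderline. When $p_->n/(n+\delta)$ one can pick $r\in(n/[(n+\delta)\underline p],\underline p)$ so that the strong-type Fefferman-Stein inequality on $L^{p(\cdot)/r}(\rn)$ closes the estimate (since the infimum of $p(\cdot)/r$ is strictly greater than $1$), but at the endpoint this infimum equals $1$ and the strong-type bound fails. To bypass this I would invoke the weak-type vector-valued Hardy-Littlewood maximal inequality on $\lv$ valid at $p_-=1$, stated as Proposition \ref{1.14.x3} and obtained via the extrapolation theorem in \cite[Theorem 5.24]{cfbook}. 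Applied to the family $\{\cm(\chi_{B_j})\}$ raised to the appropriate power, it converts the divergent would-be strong-type $\lv$-bound into a valid weak-$\lv$-bound, which is exactly the quantity absorbed by the $\whv$-norm. Summing the three contributions, taking the supremum over $\az\in(0,\fz)$, and extending from $H^{p(\cdot)}\cap L^2$ to all of $\hv$ by density then delivers the asserted inequality.
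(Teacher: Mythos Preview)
Your approach is essentially correct, and you correctly identify Proposition~\ref{1.14.x3} as the key ingredient that rescues the far-field estimate in the critical case $p_-=n/(n+\delta)$. However, the level-splitting detour $Tf=Tf_1+Tf_2$ that you import from STEP~1 of Theorem~\ref{atthm1} is unnecessary here, and the paper's proof is substantially simpler because it avoids it.

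The point is that you are mapping \emph{from} the strong space $\hv$, not from $\whv$, so you already control the full atomic quasi-norm $\|\{\sum_j(\lz_j\chi_{B_j}/\|\chi_{B_j}\|_{\lv})^{\underline p}\}^{1/\underline p}\|_{\lv}$ in the \emph{strong} $L^{p(\cdot)}$-sense. The paper therefore splits only once, into a near-ball piece ${\rm I}:=\|\sum_j\lz_j\phi_+^\ast(Ta_j)\chi_{4B_j}\|_{\lv}$ and a far-ball piece ${\rm II}:=\az\|\chi_{\{\sum_j\lz_j\phi_+^\ast(Ta_j)\chi_{(4B_j)^\complement}>\az/2\}}\|_{\lv}$. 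Piece~I is bounded directly in the strong $L^{p(\cdot)}$-norm via Lemma~\ref{atlm2} (using $\|\phi_+^\ast(Ta_j)\|_{L^2}\ls\|a_j\|_{L^2}$), which trivially dominates the weak norm. Piece~II is handled exactly as you describe: the pointwise bound $[\cm(\chi_{B_j})]^{(n+\delta)/n}/\|\chi_{B_j}\|_{\lv}$ together with Proposition~\ref{1.14.x3} applied on $L^{(n+\delta)p(\cdot)/n}(\rn)$ (whose lower exponent $(n+\delta)p_-/n\ge1$) yields the weak bound directly. No $i_0$-splitting, no support-size piece $A_{i_0}$, and no artificial level structure on the $\{\lz_j\}$ are needed.

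Your route can be made to work (the support-size piece you mention can be controlled pointwise by $\alpha^{\underline p}\chi_{\cup 2B_j}\le\sum_j(\lz_j/\|\chi_{B_j}\|_{\lv})^{\underline p}\chi_{2B_j}$), but it recreates machinery designed for the weak atomic decomposition, which carries the built-in bounded-overlap condition $\sum_j\chi_{cB_{i,j}}\le A$ that the strong $\hv$-decomposition does not automatically provide. The paper's two-term split is what the strong starting norm naturally affords.
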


The proof of Theorem \ref{bdnthm2} is given below.

\begin{rem}\label{3.4.re1}
If $p(\cdot)\equiv p\in(0,1]$, then $\whv=W\!H^p(\rn)$. In this case,
Theorem \ref{bdnthm2} indicates that, if $\delta\in(0,1]$, $p=\frac{n}{n+\delta}$
and $T$ is a convolutional $\delta$-type Calder\'on-Zygmund operator, then
$T$ is bounded from $H^{\frac{n}{n+\delta}}(\rn)$ to $W\!H^{\frac{n}{n+\delta}}(\rn)$,
which is just \cite[Theorem 1]{liu91} (see also \cite[Theorem 5.2]{lyj}).
Here $\frac n{n+\delta}$ is called the \emph{critical index}. Thus,
the boundedness of the Calder\'on-Zygmund operator from $H^{p(\cdot)}(\rn)$ to
$\whv$ obtained in Theorem \ref{bdnthm2} includes the critical case.
\end{rem}

Recall that, for any given $\gamma\in(0,\fz)$, a linear operator $T$ is called
a \emph{$\gamma$-order Calder\'on-Zygmund operator} if $T$ is bounded on $L^2(\rn)$
and its kernel
$$k:\ (\rn\times\rn)\backslash\{(x,x):\ x\in\rn\}\to\mathbb C$$
satisfies that there exists a positive constant $C$ such that,
for any $\az\in\zz^n_+$ with $|\az|\le\lceil\gamma\rceil$
and $x$, $y$, $z\in\rn$ with $|x-y|>2|y-z|$,
\begin{align}\label{3.1.y}
\lf|\partial_x^\az k(x,y)-\partial_x^\az k(x,z)\r|\le
C\frac{|y-z|^{\gamma-\lceil\gamma\rceil}}{|x-y|^{n+\gamma}}
\end{align}
and, for any $f\in L^2(\rn)$ having compact support and $x\notin\supp f$,
$$Tf(x)=\int_{\supp f}k(x,y)f(y)\,dy.$$
Here and hereafter, for any $\gz\in(0,\fz)$, $\lceil \gamma\rceil$ denotes the maximal integer
less than $\gz$.

For $m\in\nn$, an operator $T$ is said to have the \emph{vanishing moment condition up to
order $m$} if, for any $a\in L^2(\rn)$ having compact support and satisfying that,
for all $\beta\in\zz^n_+$ with $|\beta|\le m$, $\int_{\rn}a(x)x^\beta\,dx=0$,
it holds true that $\int_{\rn}x^\beta Ta(x)\,dx=0$.

The second main result of this section is stated as follows.

\begin{thm}\label{bdnthm3}
Let $p(\cdot):\ \rn\to(0,1]$ belong to $C^{\log}(\rn)$ and $\gamma\in(0,\fz)$.
Let $T$ be a $\gamma$-order Calder\'on-Zygmund operator and have
the vanishing moment condition up to order $\lceil \gamma\rceil$.
If $\lceil \gamma\rceil\le n(\frac{1}{p_-}-1)\le\gamma$ with $p_-$
as in \eqref{2.1x}, then $T$ has a unique extension on
$H^{p(\cdot)}(\rn)$ and, moreover, for all $f\in\hv$,
$\|Tf\|_{\whv}\le C\|f\|_{H^{p(\cdot)}(\rn)}$,
where $C$ is a positive constant independent of $f$.
\end{thm}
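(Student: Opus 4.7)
The plan is to mimic the scheme used in STEP 1 of the proof of Theorem \ref{atthm1}, replacing each atom $a_{i,j}$ by the function $T(a_{i,j})$ and exploiting two key ingredients: (i) $L^q$-boundedness of $T$, which is inherited from the $L^2$-boundedness together with \eqref{3.1.y} by standard Calder\'on--Zygmund theory; and (ii) a pointwise decay of $T(a_{i,j})$ off the support ball, which combines the kernel estimate \eqref{3.1.y} with the vanishing moments of $a_{i,j}$ and of $T$. First, by Remark \ref{07-22}, $H^{p(\cdot)}(\rn)\cap L^2(\rn)$ is dense in $\hv$, so it suffices to prove the estimate for $f\in \hv\cap L^2(\rn)$ and then extend $T$ by continuity. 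For any such $f$, Lemma \ref{blm1} yields $f=\sum_{i\in\zz}\sum_{j\in\nn}\lz_{i,j}a_{i,j}$ in $\cs'(\rn)$, where each $a_{i,j}$ is a $(p(\cdot),q,s)$-atom supported in $\Bij=B(\xij,\rij)$, with $q\in(\max\{p_+,1\},\fz)$ fixed and $s:=\lceil\gamma\rceil$; this choice of $s$ is admissible because the hypothesis $n({1}/{p_-}-1)\le\gamma$ gives ${n}/{p_-}-n-1\le\gamma-1<\lceil\gamma\rceil$. Since $f\in L^2(\rn)$, $Tf\in L^2(\rn)$ and, in $\cs'(\rn)$, $Tf=\sum_{i,j}\lz_{i,j}T(a_{i,j})$.

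Fix $\az\in(0,\fz)$, pick $i_0\in\zz$ with $2^{i_0}\le\az<2^{i_0+1}$, and split $Tf=Tf_1+Tf_2$ with $f_1:=\sum_{i<i_0}\sum_j\lz_{i,j}a_{i,j}$ and $f_2:=\sum_{i\ge i_0}\sum_j\lz_{i,j}a_{i,j}$. Mimicking \eqref{eqatomsum} one gets
\begin{align*}
\az\lf\|\chi_{\{x\in\rn:\,(Tf)^*(x)>\az\}}\r\|_\lv
\ls \az\lf\|\chi_{\{(Tf_1)^*>\az/2\}}\r\|_\lv
+\az\lf\|\chi_{A_{i_0}}\r\|_\lv
+\az\lf\|\chi_{\{(Tf_2)^*>\az/2\}\cap A_{i_0}^\com}\r\|_\lv,
\end{align*}
where $A_{i_0}:=\bigcup_{i\ge i_0}\bigcup_{j\in\nn}(2\Bij)$. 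The middle term is treated exactly as ${\rm I}_2$ in the proof of Theorem \ref{atthm1}, yielding $\ls\|f\|_\hv$. For each atom $a_{i,j}$, $T(a_{i,j})$ will play the role of $a_{i,j}$, provided one establishes the pointwise bound
\begin{align*}
[T(a_{i,j})]^*(x)\ls \|\chi_\Bij\|_\lv^{-1}\lf(\frac{\rij}{|x-\xij|}\r)^{n+\gz}
\ls \|\chi_\Bij\|_\lv^{-1}\lf[\cm(\chi_\Bij)(x)\r]^{(n+\gz)/n}
\quad\text{for all } x\in(2\Bij)^\com,
\end{align*}
together with $\|T(a_{i,j})\|_{L^q(\rn)}\ls |\Bij|^{1/q}\|\chi_\Bij\|_\lv^{-1}$ inside $2\Bij$. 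The pointwise bound is obtained via a Taylor expansion of $k(x,\cdot)$ at $\xij$ of order $\lceil\gamma\rceil$, using the vanishing moments of $a_{i,j}$ up to order $\lceil\gamma\rceil$, the vanishing moment of $T$ of the same order, and the H\"older inequality, as in \eqref{11.14.x2}--\eqref{eatomst}.

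With these two estimates, the first and third terms above are handled along the very same lines as ${\rm I}_1$ and ${\rm I}_3$ in STEP 1 of the proof of Theorem \ref{atthm1}: H\"older over $i$, Lemma \ref{atlm2} to convert atom-like $L^q$ information into $\chi_{\Bij}$-information, and either Lemma \ref{mlm1} or the pointwise inequality $\chi_{\bz B}\ls [\cm(\chi_B)]^{1/r}$ of Remark \ref{2.5.y}. The main obstacle will be the \emph{critical endpoint} $n({1}/{p_-}-1)=\gz$, in which case the exponent $(n+\gz)/n$ appearing in the off-diagonal bound equals $1/p_-$, so the natural application of the Fefferman--Stein vector-valued inequality of Lemma \ref{mlm1} lands exactly on the forbidden boundary $p_-=1$. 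At that single step the argument cannot use Lemma \ref{mlm1}; instead it must invoke the weak-type vector-valued inequality of $\cm$ on $\lv$ valid at $p_-=1$ (the forthcoming Proposition \ref{1.14.x3}, proved by the extrapolation theorem \cite[Theorem 5.24]{cfbook}), applied to the family $\{\chi_\Bij\}$ in the appropriate variable Lebesgue exponent. This is precisely the new difficulty absent from \cite[Theorem 5.2]{lyj} or \cite[p.\,110, Theorem 4.2]{lu95}, where the doubling property of $\vz$ allowed a direct use of the strong-type inequality.

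Carrying out these three estimates yields $\az\|\chi_{\{(Tf)^*>\az\}}\|_\lv\ls \|f\|_\hv$ uniformly in $\az\in(0,\fz)$, hence $\|Tf\|_\whv\ls\|f\|_\hv$ by Theorem \ref{mthm1}. A standard density argument using Remark \ref{07-22} then extends $T$ uniquely from $\hv\cap L^2(\rn)$ to a bounded operator from $\hv$ to $\whv$, completing the proof.
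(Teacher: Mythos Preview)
Your overall strategy is plausible and you correctly identify the crucial obstacle at the critical endpoint $p_-=n/(n+\gamma)$ together with its resolution via Proposition~\ref{1.14.x3}. However, the route you take is more complicated than the paper's and contains two genuine gaps.

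First, a structural mismatch: Lemma~\ref{blm1} gives a \emph{single-indexed} decomposition $f=\sum_{j\in\nn}\lz_ja_j$ of the strong Hardy space $\hv$, not the double-indexed level decomposition $\sum_{i\in\zz}\sum_{j\in\nn}\lz_{i,j}a_{i,j}$ with $\lz_{i,j}\sim 2^i\|\chi_{\Bij}\|_{\lv}$ that you invoke. The $i_0$-split $f=f_1+f_2$ and the treatment of the ``middle term'' as in ${\rm I}_2$ of Theorem~\ref{atthm1} both rely on that level structure and on the bounded-overlap condition $\sum_j\chi_{c\Bij}\le A$ for each fixed $i$, neither of which is delivered by Lemma~\ref{blm1} as stated. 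The paper avoids this detour entirely: it follows the template of Theorem~\ref{bdnthm2}, keeps the single-indexed decomposition, and for each $j$ splits $\phi_+^*(Ta_j)$ into its restrictions to $4B_j$ and $(4B_j)^\com$. Then Lemma~\ref{atlm2} controls the on-ball part directly by the $\ha$-norm, while Proposition~\ref{1.14.x3} (applied once, with the single family $\{\chi_{B_j}\}_j$) handles the off-ball part; no $\alpha$-dependent splitting of the atomic sum is needed.

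Second, your sketch of the pointwise bound for $[T(a_{i,j})]^*$ on $(2\Bij)^\com$ is incomplete. You propose a ``Taylor expansion of $k(x,\cdot)$'' as in \eqref{11.14.x2}--\eqref{eatomst}, but those estimates treat the grand maximal function of a \emph{compactly supported} function; here $Ta_{i,j}$ is not compactly supported, so one cannot simply rerun that argument. The paper's proof is a genuinely two-layer estimate: first use the vanishing moments of $Ta_j$ (supplied by the vanishing moment condition on $T$) to subtract the Taylor polynomial of $\phi$ at $x_j$ in the convolution $\phi_t*Ta_j$, and then split the resulting integral in $y$ into the three regions $|y-x_j|<2r_j$, $2r_j\le|y-x_j|<|x-x_j|/2$ and $|y-x_j|\ge|x-x_j|/2$ (see \eqref{3.1.y2}). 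In the inner region one uses only the $L^2$-boundedness of $T$; in the two outer regions one inserts a second Taylor expansion, this time of $k(y,\cdot)$ at $x_j$, combined with the vanishing moments of $a_j$ and the kernel bound \eqref{3.1.y}, to get the decay $|Ta_j(y)|\ls r_j^\gamma|y-x_j|^{-(n+\gamma)}\|\chi_{B_j}\|_{\lv}^{-1}$. Only after both layers does one arrive at the required bound $\phi_+^*(Ta_j)(x)\ls[\cm(\chi_{B_j})(x)]^{(n+\gamma)/n}\|\chi_{B_j}\|_{\lv}^{-1}$ on $(4B_j)^\com$.
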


The proof of Theorem \ref{bdnthm3} is presented below.

\begin{rem}\label{3.4.re2}
Recall that, for $\delta\in(0,1]$, a
\emph{non-convolutional $\delta$-type Calder\'on-Zygmund operator}
$T$ means that: $T$ is a linear bounded operator on $L^2(\rn)$
and there exist a kernel $k$ on $(\rn\times\rn)\backslash\{(x,x):\ x\in\rn\}$
and a positive constant $C$ such that, for any $x$, $y$, $z\in\rn$
with $|x-y|>2|y-z|$,
$$|k(x,y)-k(x,z)|\le C\frac{|y-z|^\delta}{|x-y|^{n+\delta}}$$
and, for any $f\in L^2(\rn)$ having compact support and $x\notin\supp f$,
$$Tf(x)=\int_{\supp f}k(x,y)f(y)\,dy.$$
Notice that, when $\gamma:=\delta\in(0,1]$, the operator $T$
in Theorem \ref{bdnthm3} is just a non-convolutional
$\delta$-type Calder\'on-Zygmund operator.
Thus, the operators of Theorem \ref{bdnthm3} conclude the non-convolutional
$\delta$-type Calder\'on-Zygmund operators as special cases. From this,
it is easy to see that the critical index
of $\gamma$-order Calder\'on-Zygmund operators is $\frac n{n+\gamma}$.
\end{rem}

To prove Theorems \ref{bdnthm2} and \ref{bdnthm3},
we need the following proposition.

\begin{prop}\label{1.14.x3}
Let $r\in(1,\fz)$ and $p(\cdot)\in C^{\log}(\rn)$ with $p_-\in[1,\fz)$,
where $p_-$ is as in \eqref{2.1x}.
Then there exists a positive constant $C$ such that, for any sequence
$\{f_j\}_{j\in\nn}$ of measurable functions and $\az\in(0,\fz)$,
$$\az\lf\|\chi_{\{x\in\rn:\ \{\sum_{j\in\nn}
[\cm(f_j)(x)]^r\}^{\frac1{r}}>\az\}}\r\|_{\lv}
\le C\lf\|\lf(\sum_{j\in\nn}|f_j|^r\r)^{\frac1{r}}\r\|_{\lv}.$$
\end{prop}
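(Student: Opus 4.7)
The plan is to derive Proposition \ref{1.14.x3} from the weak-type Rubio de Francia extrapolation theorem on variable Lebesgue spaces \cite[Theorem 5.24]{cfbook}, applied to the sublinear vector-valued operator
$$\vec f=\{f_j\}_{j\in\nn}\ \longmapsto\ T(\vec f):=\lf(\sum_{j\in\nn}\lf[\cm(f_j)\r]^r\r)^{1/r}.$$
The observation driving this is that the case $p_->1$ is already covered by Proposition \ref{mlmveq}, while the genuinely new content is the critical case $p_-=1$, where neither the strong-type bound of Lemma \ref{mlm1} nor the interpolation approach of Theorem \ref{mp1} applies; extrapolation from an endpoint $A_1$-weighted estimate is the natural substitute.

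First, I would record the following classical weighted input: for every $r\in(1,\fz)$ and every Muckenhoupt weight $w\in A_1(\rn)$, the Fefferman--Stein weak-type $(1,1)$ vector-valued maximal inequality yields
$$\az\,w\lf(\lf\{x\in\rn:\ T(\vec f)(x)>\az\r\}\r)\ls\int_\rn\lf(\sum_{j\in\nn}|f_j(x)|^r\r)^{1/r}w(x)\,dx,$$
uniformly in $\az\in(0,\fz)$ and in sequences $\{f_j\}_{j\in\nn}$, with constant depending only on $r$ and the $A_1$ constant $[w]_{A_1}$. This is a standard result that can be found in \cite{cfbook} (or in the Cruz-Uribe--Martell--P\'erez extrapolation monograph), obtained, for instance, by linearizing the $\ell^r$-valued maximal operator and invoking the Fefferman--Stein $A_1$ weak-$(1,1)$ inequality coordinate by coordinate together with duality.

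Next I would invoke \cite[Theorem 5.24]{cfbook}, whose hypothesis is exactly a weighted weak-type $(p_0,p_0)$ bound for all $w\in A_{p_0}$ (taking $p_0=1$ in our case) and whose conclusion is the corresponding weak-type bound on $\wlv$ for every $p(\cdot)\in C^{\log}(\rn)$ with $p_-\ge p_0=1$. Feeding in the estimate from the previous step, with the function pair $((\sum_j|f_j|^r)^{1/r},T(\vec f))$ playing the role of the scalar input/output pair in the extrapolation theorem, we immediately obtain
$$\az\lf\|\chi_{\{x\in\rn:\ T(\vec f)(x)>\az\}}\r\|_{\lv}\ls\lf\|\lf(\sum_{j\in\nn}|f_j|^r\r)^{1/r}\r\|_{\lv},$$
which is the desired inequality.

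The step I expect to require the most care, and which is the main obstacle, is recasting the vector-valued inequality into the scalar extrapolation framework of \cite[Theorem 5.24]{cfbook}: one must check that the weighted weak-type estimate can be regarded as a scalar estimate between the function $(\sum_j|f_j|^r)^{1/r}$ and $T(\vec f)$ with constants independent of the chosen sequence $\{f_j\}_{j\in\nn}$, so that the hypothesis of the extrapolation theorem genuinely holds. Once this bookkeeping is in place the conclusion is immediate, and no further technical ingredient beyond the $C^{\log}(\rn)$ and $p_-\ge 1$ assumptions is needed.
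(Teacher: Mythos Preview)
Your approach is essentially identical to the paper's: it too uses the weighted weak-type Fefferman--Stein vector-valued inequality for $A_1$ weights (cited there from Andersen--John \cite{aj80}) as input to the extrapolation theorem \cite[Theorem 5.24]{cfbook} with $p_0=1$. The one point you flag as ``the main obstacle'' is resolved exactly as you anticipate: the paper encodes the weak-type estimate as a strong-type bound on the scalar pair $(F_\az,G)$ with $F_\az:=\az\,\chi_{\{T(\vec f)>\az\}}$ and $G:=(\sum_j|f_j|^r)^{1/r}$, so that $\int_\rn F_\az\,w\,dx\ls\int_\rn G\,w\,dx$ for all $w\in A_1$ is precisely the weighted weak-$(1,1)$ inequality, and the extrapolated conclusion $\|F_\az\|_{\lv}\ls\|G\|_{\lv}$ is the desired bound.
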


The proof of Proposition \ref{1.14.x3} depends on the following
extrapolation theorem corresponding to the {Muckenhoupt weight class} $A_1(\rn)$,
which is just \cite[Theorem 5.24]{cfbook} and a weaker version can also
be found in \cite[Theorem 1.3]{cf06}.
Recall that a locally integrable function $w$,
which is positive almost everywhere on $\rn$, is said to belong to
$A_1(\rn)$ if
$$[w]_{A_1(\rn)}:=\mathop\mathrm{ess\,sup}_{x\in\rn}\frac{\cm(w)(x)}{w(x)}<\fz,$$
where $\cm$ denotes the Hardy-Littlewood maximal function as in \eqref{2.2x}.

\begin{lem}\label{1.14.x2}
Suppose that the family $\cf$ is a set of all pairs of functions
$(F,G)$ satisfying that there exists $p_0\in[1,\fz)$ such that,
for every $w\in A_1(\rn)$,
$$\int_\rn [F(x)]^{p_0}w(x)\,dx\le C_{(p_0,[w]_{A_1(\rn)})}\int_\rn [G(x)]^{p_0}w(x)\,dx,$$
where $C_{(p_0,[w]_{A_1(\rn)})}$ is a positive constant independent of $F$ and $G$,
but may depend on $p_0$ and $[w]_{A_1(\rn)}$.
Let $p(\cdot)\in\cp(\rn)$ be such that
$p_0\le p_-\le p_+<\fz$ with $p_-$ and $p_+$ as in \eqref{2.1x}.
If the maximal operator $\cm$ is bounded on $L^{(p(\cdot)/p_0)^*}(\rn)$,
where, for all $x\in\rn$,
$$\frac{1}{(p(x)/p_0)^*}+\frac{1}{p(x)/p_0}=1,$$
then there exists a positive constant $C$ such that, for all $(F,G)\in\cf$,
$$\|F\|_{\lv}\le C\|G\|_{\lv}.$$
\end{lem}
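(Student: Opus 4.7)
The plan is to adapt the classical Rubio de Francia extrapolation argument to the variable Lebesgue space setting, together with the duality between $L^{p(\cdot)/p_0}(\rn)$ and its associate space $L^{(p(\cdot)/p_0)^*}(\rn)$. First, by Remark \ref{r-vlp}(i), the desired inequality $\|F\|_\lv\le C\|G\|_\lv$ is equivalent to
$$\lf\|F^{p_0}\r\|_{L^{p(\cdot)/p_0}(\rn)}\le C\lf\|G^{p_0}\r\|_{L^{p(\cdot)/p_0}(\rn)}.$$
Since $p_-/p_0\ge 1$, the exponent $p(\cdot)/p_0$ falls in the Banach range, so by the duality between $L^{p(\cdot)/p_0}(\rn)$ and its associate space it suffices to prove that, for every non-negative $h$ with $\|h\|_{L^{(p(\cdot)/p_0)^*}(\rn)}\le 1$,
$$\int_\rn F(x)^{p_0}h(x)\,dx\le C\lf\|G^{p_0}\r\|_{L^{p(\cdot)/p_0}(\rn)}.$$

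To accomplish this, I would introduce the Rubio de Francia iteration: fix such an $h$, write $\|\cm\|$ for the operator norm of $\cm$ on $L^{(p(\cdot)/p_0)^*}(\rn)$ (finite by hypothesis), and define
$$\mathcal{R}h(x):=\sum_{k=0}^{\fz}\frac{\cm^k h(x)}{(2\|\cm\|)^k},$$
where $\cm^k$ denotes the $k$-fold iterate of $\cm$ with $\cm^0:=\mathrm{Id}$. Standard manipulations yield three properties: (i) $h\le\mathcal{R}h$ pointwise, trivially from the $k=0$ summand; (ii) $\|\mathcal{R}h\|_{L^{(p(\cdot)/p_0)^*}(\rn)}\le 2$ by a geometric-series estimate; and (iii) the subadditivity of $\cm$ combined with the construction gives $\cm(\mathcal{R}h)\le 2\|\cm\|\,\mathcal{R}h$, whence $\mathcal{R}h\in A_1(\rn)$ with $[\mathcal{R}h]_{A_1(\rn)}\le 2\|\cm\|$ uniformly in $h$.

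Armed with $\mathcal{R}h\in A_1(\rn)$ and a uniform bound on its characteristic, I would apply the hypothesis defining $\cf$ with the weight $w:=\mathcal{R}h$ and then invoke the generalized H\"older inequality on variable Lebesgue spaces to obtain
$$\int_\rn F^{p_0}h\,dx\le\int_\rn F^{p_0}\mathcal{R}h\,dx\le C\int_\rn G^{p_0}\mathcal{R}h\,dx\ls\lf\|G^{p_0}\r\|_{L^{p(\cdot)/p_0}(\rn)}\lf\|\mathcal{R}h\r\|_{L^{(p(\cdot)/p_0)^*}(\rn)},$$
where the constant depends only on $p_0$ and $\|\cm\|$ through $[\mathcal{R}h]_{A_1(\rn)}$. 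Combining with (ii) and taking the supremum over admissible $h$ produces the required bound.

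The main technical obstacle is that the intermediate inequality $\int_\rn F^{p_0}\mathcal{R}h\,dx\le C\int_\rn G^{p_0}\mathcal{R}h\,dx$ is vacuous if the left-hand side is infinite; a priori nothing prevents this. The standard workaround is a truncation: replace $F$ by $F_N:=\min\{F,N\}\chi_{B(\vec0_n,N)}$, verify that the pair $(F_N,G)$ still belongs to $\cf$ under the usual closedness convention, execute the argument for each $N$, and then pass to the limit $N\to\fz$ via monotone convergence. The remaining ingredients, namely the Banach duality $(L^{p(\cdot)/p_0}(\rn))^*\simeq L^{(p(\cdot)/p_0)^*}(\rn)$ and the H\"older inequality for variable exponents, are both available in \cite{cfbook,dhr11} precisely because $p_-/p_0\ge 1$.
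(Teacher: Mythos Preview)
The paper does not prove this lemma at all; it simply cites it as \cite[Theorem 5.24]{cfbook}. Your argument is the standard Rubio de Francia extrapolation proof adapted to variable exponents, which is precisely how that cited result is obtained, so your proposal is correct and matches the underlying reference. One minor point: since the statement defines $\cf$ as the set of \emph{all} pairs satisfying the weighted inequality, the closure under truncation that you flag is automatic here, because $F_N\le F$ pointwise forces $\int F_N^{p_0}w\le\int F^{p_0}w$ for every weight $w$.
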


We also need the following weak-type weighted Fefferman-Stein
vector-valued inequality of the Hardy-Littlewood maximal operator $\cm$ in \eqref{2.2x}
from \cite[Theorem 3.1(a)]{aj80}.

\begin{lem}\label{1.14.x1}
Let $r\in(1,\fz)$ and $w\in A_1(\rn)$. Then there
exists a positive constant $C$, depending on $n$, $r$ and $[w]_{A_1(\rn)}$,
such that, for all $\az\in(0,\fz)$
and sequences $\{f_j\}_{j\in\nn}$ of measurable functions on $\rn$,
$$\az\, w\lf(\lf\{x\in\rn:\ \lf(\sum_{j\in\nn}
[\cm(f_j)(x)]^r\r)^{\frac1{r}}>\az\r\}\r)
\le C\int_{\rn}\lf[\sum_{j\in\nn}|f_j(x)|^r\r]^{\frac1{r}}w(x)\,dx.$$
\end{lem}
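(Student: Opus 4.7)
The plan is to prove this weighted weak-type vector-valued Fefferman-Stein inequality via a Calder\'on-Zygmund decomposition performed on the scalar function $F := (\sum_{j\in\nn} |f_j|^r)^{1/r}$, combined with the classical strong-type Fefferman-Stein inequality on $L^s(w)$ for any $s \in (1, \fz)$, which is known to hold for $w \in A_s$ and hence for our $w \in A_1 \subset A_s$. After truncation I may assume $F \in L^1(w)$. Applying the CZ decomposition at level $\az$, I obtain pairwise disjoint dyadic cubes $\{Q_k\}_{k\in\nn}$ with $\az < \frac{1}{|Q_k|}\int_{Q_k} F \le 2^n \az$ for each $k$, and $F(x) \le \az$ for a.e.\ $x \in (\bigcup_k Q_k)^c$. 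I then split $f_j = g_j + b_j$ in the standard way, so that $g_j$ equals $f_j$ off the cubes and equals $\frac{1}{|Q_k|}\int_{Q_k}f_j$ on $Q_k$, while $b_j := \sum_k b_{j,k}$ with $b_{j,k} := (f_j - \frac{1}{|Q_k|}\int_{Q_k}f_j)\chi_{Q_k}$. Minkowski's inequality in $\ell^r$ yields the pointwise bound $G := (\sum_j |g_j|^r)^{1/r} \le 2^n \az$ almost everywhere, while the $A_1$ condition gives $\int_{\rn}G\,w\,dx \le C[w]_{A_1}\int_{\rn}F\,w\,dx$ by comparing averages on each $Q_k$.

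For the good part, Chebyshev's inequality combined with the strong-type $L^s(w)$ Fefferman-Stein inequality leads to
\begin{align*}
w\lf(\lf\{x\in\rn:\ \lf(\sum_{j\in\nn}[\cm(g_j)(x)]^r\r)^{1/r} > \az/2\r\}\r)
&\le \frac{C}{\az^s}\int_\rn G(x)^s\,w(x)\,dx\\
&\le \frac{C(2^n\az)^{s-1}}{\az^s}\int_\rn G(x)\,w(x)\,dx\\
&\le \frac{C}{\az}\int_\rn F(x)\,w(x)\,dx.
\end{align*}
For the bad part, set $\wz E := \bigcup_k 2Q_k$. The $A_1$ property, together with the CZ stopping inequality and the observation $|Q_k|\essinf_{Q_k}w \le \int_{Q_k}w$, yields $\az w(Q_k) \le [w]_{A_1}\int_{Q_k}F\,w\,dx$; summing over $k$ and using the doubling of $w\in A_1$ (so that $w(2Q_k)\le C w(Q_k)$) produces $\az w(\wz E)\le C\|F\|_{L^1(w)}$. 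The remaining step is to estimate $w(\{x\notin\wz E:\ (\sum_j[\cm(b_j)(x)]^r)^{1/r}>\az/2\})$, using that for $x\notin 2Q_k$ any ball $B\ni x$ meeting $Q_k$ satisfies $|B|\gs(|x-c_{Q_k}|+r(Q_k))^n$, which produces the pointwise bound $\cm(b_{j,k})(x)\ls(|x-c_{Q_k}|+r(Q_k))^{-n}\int_{Q_k}|f_j|$; applying Minkowski in both $j$ and $k$, integrating against $w\,dx$, and invoking Chebyshev should then give the desired bound.

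The main obstacle I anticipate is precisely controlling the bad part outside $\wz E$: the pointwise decay of order $|x-c_{Q_k}|^{-n}$ is borderline, so the naive annular summation $\sum_i (2^i r(Q_k))^{-n}w(2^iQ_k)$ does not converge even for $A_1$ weights, since $w(2^iQ_k)\le[w]_{A_1}2^{in}w(Q_k)$ cancels the decay and no additional geometric factor is available. The remedy is to bypass the pointwise-then-integrate strategy and instead combine the scalar weighted weak-type $(1,1)$ of Muckenhoupt (namely, $\az w(\{\cm h > \az\})\le C\|h\|_{L^1(w)}$ for $w\in A_1$) with a Marcinkiewicz-Zygmund-type reduction from $\ell^r$-valued to scalar estimates; alternatively one may apply a Banach-valued Calder\'on-Zygmund argument directly in the space $\ell^r$. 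Either route, taken in \cite{aj80}, produces the constant $C$ depending only on $n$, $r$, and $[w]_{A_1}$ that the lemma requires.
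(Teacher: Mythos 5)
The paper does not prove this lemma; it is quoted directly from Andersen and John \cite[Theorem 3.1(a)]{aj80}, so there is no in-paper argument to compare against, and your proposal stands on its own.

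Your plan through the Calder\'on--Zygmund decomposition of $F := (\sum_{j}|f_j|^r)^{1/r}$ at height $\az$, the good/bad splitting $f_j=g_j+b_j$, the Chebyshev-plus-$L^s(w)$ treatment of the good part, and the control of $w(\wz E)$ via the $A_1$ condition is sound, and your diagnosis that the naive pointwise estimate $\cm(b_{j,k})(x)\ls (|x-c_{Q_k}|+r(Q_k))^{-n}\int_{Q_k}|f_j|$ cannot be summed against an $A_1$ weight correctly identifies the true obstruction: $\cm$ averages $|b_{j,k}|$, so the mean-zero cancellation of $b_{j,k}$ is unavailable. However, the two remedies you gesture at do not close the gap. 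The Marcinkiewicz--Zygmund theorem transfers scalar bounds to $\ell^r$-valued bounds only for \emph{linear} operators, and in its usual form only for strong-type estimates; $\cm$ is merely sublinear and the desired conclusion is weak-type, so that route fails. Likewise, $\cm$ has no singular-integral kernel, so a Banach-valued Calder\'on--Zygmund argument in $\ell^r$ does not apply off the shelf. The missing ingredient is an \emph{averaged-comparison} bound replacing cancellation: for $x\notin\wz E$ any ball $B\ni x$ that meets $Q_k$ must, after a fixed dimensional dilation $c=c(n)$, contain all of $Q_k$, whence $\frac1{|B|}\int_B|b_j|\ls\frac1{|cB|}\int_{cB}h_j$ with $h_j:=\sum_k\big(\frac1{|Q_k|}\int_{Q_k}|f_j|\big)\chi_{Q_k}$, so that $\cm(b_j)(x)\ls\cm(h_j)(x)$ on $(\wz E)^\com$. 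Since Minkowski's integral inequality and the stopping condition give $(\sum_j h_j^r)^{1/r}\le 2^n\az$ pointwise, and the $A_1$ condition gives $\|(\sum_j h_j^r)^{1/r}\|_{L^1(w)}\ls[w]_{A_1(\rn)}\|F\|_{L^1(w)}$, the very same Chebyshev-plus-$L^s(w)$ argument you used on the good part then disposes of the bad part. Without a step of this kind, your proof does not close.
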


\begin{proof}[Proof of Proposition \ref{1.14.x3}]
For any $r\in(1,\fz)$,  $\az\in(0,\fz)$ and
any sequence $\{f_j\}_{j\in\nn}$ of measurable functions on $\rn$,
let $\cf_\az$ be the set of all pairs $(F_\az,\ G)$,
where, for all $x\in\rn$,
$$F_\az(x):=\az\chi_{\{x\in\rn:\ \{\sum_{j\in\nn}
[\cm(f_j)(x)]^r\}^{\frac1{r}}>\az\}}(x)
\quad\mbox{and}\quad
G(x):=\lf[\sum_{j\in\nn}|f_j(x)|^r\r]^{\frac1{r}}.$$
Then, by Lemma \ref{1.14.x1}, we know that, for every $w\in A_1(\rn)$,
\begin{align}\label{1.17x}
&\int_{\rn}F_\az(x)w(x)\,dx\noz\\
&\hs=\az\, w\lf(\lf\{x\in\rn:\ \lf(\sum_{j\in\nn}
[\cm(f_j)(x)]^r\r)^{\frac1{r}}>\az\r\}\r)
\ls\int_{\rn}G(x)w(x)\,dx.
\end{align}
From $p(\cdot)\in C^{\log}(\rn)$ and $p_-\in[1,\fz)$, it follows that
$\cm$ is bounded on $L^{p^*(\cdot)}(\rn)$ (see, for example, \cite[Theorem 3.16]{cfbook}).
Thus, by this, \eqref{1.17x} and via applying Lemma \ref{1.14.x2} with $p_0:=1$
and $\cf:=\cf_\az$, we conclude that, for all $\az\in(0,\fz)$,
\begin{align*}
\az\lf\|\chi_{\{x\in\rn:\ \{\sum_{j\in\nn}
[\cm(f_j)(x)]^r\}^{\frac1{r}}>\az\}}\r\|_{L^{p(\cdot)}(\rn)}
=\lf\|F_\az\r\|_{\lv}\ls \|G\|_{\lv}.
\end{align*}
Therefore, we find that, for all $\az\in(0,\fz)$ and sequences $\{f_j\}_{j\in\nn}$
of measurable functions,
$$\az\lf\|\chi_{\{x\in\rn:\ \{\sum_{j\in\nn}
[\cm(f_j)(x)]^r\}^{\frac1{r}}>\az\}}\r\|_{\lv}
\ls\lf\|\lf[\sum_{j\in\nn}|f_j(x)|^r\r]^{\frac1{r}}\r\|_{\lv},$$
which completes the proof of Proposition \ref{1.14.x3}.
\end{proof}

We next prove Theorem \ref{bdnthm2}.

\begin{proof}[Proof of Theorem \ref{bdnthm2}]
Let $p(\cdot)$ and $s$ be as in Lemma \ref{blm1} and $f\in H^{p(\cdot)}(\rn)\cap L^2(\rn)$.
Then, by Lemma \ref{blm1} and its proof (see also the proof of \cite[Theorem 4.5]{ns12}), we know that
there exist sequences $\{\lz_j\}_{j\in\nn}$ of positive constants and
$\{a_j\}_{j\in\nn}$ of $(p(\cdot),2,s)$-atoms supported on balls $\{B_j\}_{j\in\nn}:=\{B(x_j,r_j):\
x_j\in\rn\ \mbox{and}\ r_j\in(0,\fz)\}_{j\in\nn}$ such that
\begin{equation}\label{7.2x}
f=\sum_{j\in\nn}\lz_ja_j\quad {\rm in} \quad L^2(\rn)
\end{equation}
and
$$\lf\|\lf\{\sum_{j\in\nn}\lf[\frac{\lz_j\chi_{B_j}}
{\|\chi_{B_j}\|_{\lv}}\r]
^{\underline{p}}\r\}^{\frac{1}{\underline{p}}}\r\|_{\lv}
\ls\|f\|_{H^{p(\cdot)}(\rn)},$$
where $\underline{p}$ is as in \eqref{2.1y}.
Since the operator $T$ is bounded on $L^2(\rn)$, it follows from \eqref{7.2x} that
\begin{equation*}
Tf=\sum_{j\in\nn}\lz_jTa_j
\end{equation*}
holds true in $L^2(\rn)$, namely, $Tf$ is well defined for any $f\in H^{p(\cdot)}(\rn)\cap
L^2(\rn)$. Let $\phi\in\cs(\rn)$ satisfy $\int_{\rn}\phi(x)\,dx\neq0$.
Then, to prove this theorem, by Theorem \ref{mthm1}, we only need to show that
\begin{align}\label{7.1x}
\lf\|\phi_{+}^\ast(Tf)\r\|_{\wlv}\ls\|f\|_{H^{p(\cdot)}(\rn)},
\end{align}
where $\phi_{+}^\ast(Tf)$ is as in \eqref{3.5x} with $f$ replaced by $Tf$.
For any $\az\in(0,\fz)$, by Remark \ref{r-vlp}, we have
\begin{align*}
\quad&\az\lf\|\chi_{\{x\in\rn:\ \phi_{+}^\ast(Tf)(x)>\az\}}\r\|_{\lv}\\
&\hs\le\az\lf\|\chi_{\{x\in\rn:\ \sum_{j\in\nn}\lz_j
\phi_{+}^\ast(Ta_j)(x)>\az\}}\r\|_{\lv}\\
&\hs\ls\az\lf\|\chi_{\{x\in\rn:\ \sum_{j\in\nn}
\lz_j\phi_{+}^\ast(Ta_j)(x)\chi_{4B_j}(x)
>\frac{\az}2\}}\r\|_{\lv}\\
&\quad\quad+\az\lf\|\chi_{\{x\in\rn:\ \sum_{j\in\nn}\lz_j
\phi_{+}^\ast(Ta_j)(x)\chi_{(4B_j)^{\complement}}(x)
>\frac{\az}2\}}\r\|_{\lv}\\
&\hs\ls\lf\|\sum_{j\in\nn}\lz_j\phi_{+}^\ast(Ta_j)\chi_{4B_j}\r\|_{\lv}
+\az\lf\|\chi_{\{x\in\rn:\ \sum_{j\in\nn}\lz_j
\phi_{+}^\ast(Ta_j)(x)\chi_{(4B_j)^{\complement}}(x)
>\frac{\az}2\}}\r\|_{\lv}\\
&\hs=:{\rm I}+{\rm II}.
\end{align*}

Observe that $\phi_{+}^\ast(Ta_j)\ls\cm(Ta_j)$ and $a_j\in L^2(\rn)$.
From the facts that $\cm$ is bounded on $L^r(\rn)$ with $r\in(1,\fz]$ and that
$T$ is bounded on $L^2(\rn)$, we conclude that
$$\|\phi_{+}^\ast(Ta_j)\|_{L^2(\rn)}\ls\|\cm(Ta_j)\|_{L^2(\rn)}
\ls\|Ta_j\|_{L^2(\rn)}\ls\|a_j\|_{L^2(\rn)}
\ls\frac{|B_j|^{1/2}}{\|\chi_{B_j}\|_{\lv}},$$
which, combined with Lemmas \ref{atlm2} and \ref{mlm1},
implies that
\begin{align}\label{1.26.x1}
{\rm I}&\ls\lf\|\lf\{\sum_{j\in\nn}
\lf[\frac{\lz_j\chi_{4B_j}}{\|\chi_{B_j}\|_{\lv}}\r]
^{\underline{p}}\r\}^{\frac{1}{\underline{p}}}\r\|_{\lv}\noz\\
&\ls\lf\|\lf\{\sum_{j\in\nn}
\lf[\frac{\lz_j\chi_{B_j}}{\|\chi_{B_j}\|_{\lv}}\r]
^{\underline{p}}\r\}^{\frac{1}{\underline{p}}}\r\|_{\lv}
\ls\|f\|_{H^{p(\cdot)}(\rn)}.
\end{align}

Next, we deal with ${\rm II}$. To this end, by an argument similar to that
used in the proof of \cite[(5.4), (5.5),(5.6) and (5.7)]{lyj},
we conclude that for any $j\in\nn$,
\begin{equation*}
\phi_+^\ast(Ta_j)\chi_{(4B_j)^\complement}(x)\ls\frac{{r_j}^{n+\delta}}{|x-x_j|^{n+\delta}}
\frac{1}{\|\chi_{B_j}\|_{\lv}}
\ls\lf[\cm(\chi_{B_j})(x)\r]^{\frac{n+\delta}{n}}
\frac{1}{\|\chi_{B_j}\|_{\lv}}
\end{equation*}
Thus, by Proposition \ref{1.14.x3}, we know that
\begin{align}\label{1.26.x2}
{\rm II}&\ls\az\lf\|\chi_{\{x\in\rn:\ \sum_{j\in\nn}
\frac{\lz_j}{\|\chi_{B_j}\|_{\lv}}
[\cm(\chi_{B_j})(x)]^{\frac{n+\delta}{n}}
>\frac{\az}2\}}\r\|_{\lv}\noz\\
&\ls\frac{\az}{2}\lf\|\chi_{\{x\in\rn:\ [\sum_{j\in\nn}\frac{\lz_j}
{\|\chi_{B_j}\|_{\lv}}\{\cm(\chi_{B_j})(x)\}^{\frac{n+\delta}{n}}]
^{\frac{n}{n+\delta}}>(\frac{\az}{2})^{\frac{n}{n+\delta}}\}}\r\|
_{L^{\frac{(n+\delta)p(\cdot)}{n}}(\rn)}^{\frac{n+\delta}{n}}\noz\\
&\ls\lf\|\lf[\sum_{j\in\nn}\frac{\lz_j\chi_{B_j}}
{\|\chi_{B_j}\|_{\lv}}\r]^{\frac{n}{n+\delta}}\r\|
_{L^{\frac{(n+\delta)p(\cdot)}{n}}(\rn)}^{\frac{n+\delta}{n}}\noz\\
&\ls\lf\|\lf\{\sum_{j\in\nn}
\lf[\frac{\lz_j\chi_{B_j}}{\|\chi_{B_j}\|_{\lv}}\r]
^{\underline{p}}\r\}^{\frac{1}{\underline{p}}}\r\|_{\lv}
\ls\|f\|_{H^{p(\cdot)}(\rn)}.
\end{align}
Finally, combining \eqref{1.26.x1} and \eqref{1.26.x2}, we conclude that,
for any $\az\in(0,\fz)$,
$$\az\lf\|\chi_{\{x\in\rn:\ \phi_{+}^\ast(Tf)(x)>\az\}}\r\|_{\lv}
\ls\|f\|_{H^{p(\cdot)}(\rn)},$$
namely, \eqref{7.1x} holds true.
This, together with Remark \ref{07-22} and a dense argument, finishes the proof of Theorem \ref{bdnthm2}.
\end{proof}

We finally prove Theorem \ref{bdnthm3}.

\begin{proof}[Proof of Theorem \ref{bdnthm3}]
By an argument similar to that used in the proof of Theorem \ref{bdnthm2},
it suffices to show that, for all $\az\in(0,\fz)$ and $f\in\hv\cap L^2(\rn)$,
\begin{align}\label{3.4.x1}
\az\lf\|\chi_{\{x\in\rn:\ \sum_{j\in\nn}\lz_j
\phi_{+}^\ast(Ta_j)(x)\chi_{(4B_j)^{\complement}}(x)
>\frac{\az}2\}}\r\|_{\lv}\ls\|f\|_{\hv},
\end{align}
where $\lz_j$, $a_j$ and $B_j$ are as in the proof of Theorem \ref{bdnthm2}.
To this end, we need some finer estimates about $\phi_{+}^\ast(Ta_j)$.
By the vanishing moment condition of $T$
and the fact that $\lceil \gamma\rceil\le n(\frac{1}{p_-}-1)\le s$,
we know that, for all $j\in\nn$, $t\in(0,\fz)$ and $x\in(4B_j)^{\com}$,
\begin{align}\label{3.1.y2}
\lf|\phi_t*Ta_j(x)\r|
&\le\frac{1}{t^n}\int_{\rn}
\lf|\phi\lf(\frac{x-y}{t}\r)-\sum_{|\beta|\le\lceil\gamma\rceil}
\frac{D^\beta\phi\lf(\frac{x-x_j}{t}\r)}{\beta!}
\lf(\frac{y-x_j}{t}\r)^\beta\r||Ta_j(y)|\,dy\noz\\
&=\frac{1}{t^n}\lf(\int_{|y-x_j|<2r_j}+\int_{2r_j\le|y-x_j|<\frac{|x-x_j|}{2}}
+\int_{|y-x_j|\ge\frac{|x-x_j|}{2}}\r)\noz\\
&\quad\quad\times\lf|\phi\lf(\frac{x-y}{t}\r)-\sum_{|\beta|\le\lceil\gamma\rceil}
\frac{D^\beta\phi\lf(\frac{x-x_j}{t}\r)}{\beta!}
\lf(\frac{y-x_j}{t}\r)^\beta\r||Ta_j(y)|\,dy\noz\\
&=:{\rm A}_1+{\rm A}_2+{\rm A}_3.
\end{align}

For ${\rm A}_1$, by the Taylor remainder theorem, we find that, for any $j\in\nn$ and $y\in\rn$
with $|y-x_j|<2r_j$, there exists $\xi_1(y)\in 2B_j$ such that
\begin{align*}
{\rm A}_1&=\frac{1}{t^n}\int_{|y-x_j|<2r_j}
\lf|\phi\lf(\frac{x-y}{t}\r)-\sum_{|\beta|\le\lceil\gamma\rceil}
\frac{D^\beta\phi\lf(\frac{x-x_j}{t}\r)}{\beta!}
\lf(\frac{y-x_j}{t}\r)^\beta\r||Ta_j(y)|\,dy\\
&\le\frac{1}{t^n}\int_{|y-x_j|<2r_j}
\lf|\sum_{|\beta|=\lceil\gamma\rceil+1}\partial^\beta\phi
\lf(\frac{x-\xi_1(y)}{t}\r)\r|\lf(\frac{|y-x_j|}{t}\r)^{\lceil \gamma\rceil+1}|Ta_j(y)|\,dy,
\end{align*}
which, combined with the H\"older inequality and the fact that $T$ is bounded on $L^2(\rn)$,
implies that, for all $t\in(0,\fz)$ and $x\in(4B_j)^{\com}$,
\begin{align}\label{3.1.y3}
{\rm A}_1
&\ls\frac{1}{t^n}\int_{|y-x_j|<2r_j}
\frac{t^{n+\lceil \gamma\rceil+1}}{|x-x_j|^{n+\lceil \gamma\rceil+1}}
\frac{|y-x_j|^{\lceil \gamma\rceil+1}}{t^{\lceil \gamma\rceil+1}}|Ta_j(y)|\,dy\noz\\
&\ls\frac{r_j^{\lceil \gamma\rceil+1}}
{|x-x_j|^{n+\lceil \gamma\rceil+1}}\|Ta_j\|_{L^2(\rn)}|B_j|^{1/2}
\ls\frac{r_j^{n+\lceil \gamma\rceil+1}}{|x-x_j|^{n+\lceil \gamma\rceil+1}}
\frac{1}{\|\chi_{B_j}\|_{\lv}}.
\end{align}

For ${\rm A}_2$, by the Taylor remainder theorem, the vanishing
moment condition of $a_j$, the fact that
$\lceil \gamma\rceil\le n(\frac{1}{p_-}-1)\le s$,
\eqref{3.1.y} and the H\"older inequality,
we conclude that, for all $z\in B_j$, there exists $\xi_2(z)\in B_j$ such that,
for all $t\in(0,\fz)$ and $x\in(4B_j)^{\com}$,
\begin{align}\label{3.1.y4}
{\rm A}_2&=\frac{1}{t^n}\int_{2r_j\le|y-x_j|<\frac{|x-x_j|}{2}}
\lf|\phi\lf(\frac{x-y}{t}\r)-\sum_{|\beta|\le\lceil \gamma\rceil}
\frac{D^\beta\phi\lf(\frac{x-x_j}{t}\r)}{\beta!}
\lf(\frac{y-x_j}{t}\r)^\beta\r||Ta_j(y)|\,dy\noz\\
&\ls\int_{2r_j\le|y-x_j|<\frac{|x-x_j|}{2}}
\frac{|y-x_j|^{\lceil \gamma\rceil+1}}{|x-x_j|^{n+\lceil \gamma\rceil+1}}\noz\\
&\quad\quad\times\lf[\int_{B_j}|a_j(z)|\lf|k(y,z)-
\sum_{|\beta|\le\lceil \gamma\rceil}\frac{\partial_y^\beta
k(y,x_j)}{\beta!}(z-x_j)^\beta\r|\,dz\r]\,dy\noz\\
&\ls\frac{1}{|x-x_j|^{n+\lceil \gamma\rceil+1}}\int_{2r_j\le|y-x_j|<\frac{|x-x_j|}{2}}
|y-x_j|^{\lceil \gamma\rceil+1}\noz\\
&\quad\quad\times\int_{B_j}|a_j(z)|\lf|\sum_{|\beta|=\lceil \gamma\rceil}\frac{\partial_y^\beta
k(y,x_j)-\partial_y^\beta k(y,\xi_2(z))}{\beta!}(z-x_j)^\beta\r|\,dz\,dy\noz\\
&\ls\frac{1}{|x-x_j|^{n+\lceil \gamma\rceil+1}}\int_{2r_j\le|y-x_j|<\frac{|x-x_j|}{2}}
|y-x_j|^{\lceil \gamma\rceil+1}\int_{B_j}|a_j(z)|\frac{|z-x_j|^\gamma}
{|y-x_j|^{n+\gamma}}\,dz\,dy\noz\\
&\ls\frac{r_j^\gamma}{|x-x_j|^{n+\lceil \gamma\rceil+1}}\int_{2r_j\le|y-x_j|<\frac{|x-x_j|}{2}}
\frac{1}{|y-x_j|^{n+\gamma-\lceil \gamma\rceil-1}}\,dy\|a_j\|_{L^2(\rn)}|B_j|^{1/2}\noz\\
&\ls\frac{r_j^{n+\gamma}}{|x-x_j|^{n+\gamma}}\frac{1}{\|\chi_{B_j}\|_{\lv}}.
\end{align}

For ${\rm A}_3$, by the vanishing moment condition of $a_j$, the fact that
$\lceil \gamma\rceil\le n(\frac{1}{p_-}-1)\le s$,
\eqref{3.1.y} and the H\"older inequality,
we find that, for all $z\in B_j$, there exists $\xi_3(z)\in B_j$ such that,
for all $t\in(0,\fz)$ and $x\in(4B_j)^{\com}$,
\begin{align}\label{3.1.y5}
{\rm A}_3
&\le\int_{|y-x_j|\ge\frac{|x-x_j|}{2}}\lf|\frac{1}{t^n}
\lf[\phi\lf(\frac{x-y}{t}\r)-\sum_{|\beta|\le\lceil \gamma\rceil}
\frac{D^\beta\phi\lf(\frac{x-x_j}{t}\r)}{\beta!}\lf(\frac{y-x_j}{t}\r)^\beta\r]\r|\noz\\
&\quad\quad\times\lf\{\int_{B_j}|a_j(z)|\lf|k(y,z)-\sum_{|\beta|\le\lceil \gamma\rceil}
\frac{\partial_y^\beta k(y,x_j)}{\beta!}(z-x_j)^\beta\r|\,dz\r\}\,dy\noz\\
&\ls\int_{|y-x_j|\ge\frac{|x-x_j|}{2}}\lf|\frac{1}{t^n}
\lf[\phi\lf(\frac{x-y}{t}\r)-\sum_{|\beta|\le\lceil \gamma\rceil}
\frac{D^\beta\phi\lf(\frac{x-x_j}{t}\r)}{\beta!}\lf(\frac{y-x_j}{t}\r)^\beta\r]\r|\noz\\
&\quad\quad\times\int_{B_j}|a_j(z)|\lf|
\sum_{|\beta|=\lceil \gamma\rceil}\frac{\partial_y^\beta
k(y,x_j)-\partial_y^\beta k(y,\xi_3(z))}{\beta!}(z-x_j)^\beta\r|\,dz\,dy\noz\\
&\ls\int_{|y-x_j|\ge\frac{|x-x_j|}{2}}|\phi_t(x-y)|\int_{B_j}|a_j(z)|
\frac{|z-x_j|^\gamma}{|y-x_j|^{n+\gamma}}\,dz\,dy\noz\\
&\quad\quad+\int_{|y-x_j|\ge\frac{|x-x_j|}{2}}\lf|\frac{1}{t^n}
\sum_{|\beta|\le\lceil \gamma\rceil}
\frac{D^\beta\phi\lf(\frac{x-x_j}{t}\r)}{\beta!}
\lf(\frac{y-x_j}{t}\r)^\beta\r|\noz\\
&\quad\quad\times\int_{B_j}|a_j(z)|
\frac{|z-x_j|^\gamma}{|y-x_j|^{n+\gamma}}\,dz\,dy\noz\\
&\ls\frac{r_j^\gamma}{|x-x_j|^{n+\gamma}}\|a_j\|_{L^2(\rn)}|B_j|^{1/2}
\int_{|y-x_j|\ge\frac{|x-x_j|}{2}}\lf|\phi_t(x-y)\r|\,dy\noz\\
&\quad\quad+\sum_{|\beta|\le\lceil \gamma\rceil}
r_j^\gamma\|a_j\|_{L^2(\rn)}|B_j|^{1/2}\noz\\
&\quad\quad\times\int_{|y-x_j|\ge\frac{|x-x_j|}{2}}\frac{1}{t^n}
\frac{t^{n+|\beta|}}{|x-x_j|^{n+|\beta|}}
\frac{|y-x_j|^{|\beta|}}{|t|^{|\beta|}}\frac{1}{|y-x_j|^{n+\gamma}}\,dy\noz\\
&\ls\frac{r_j^{n+\gamma}}{|x-x_j|^{n+\gamma}}\frac{1}{\|\chi_{B_j}\|_{\lv}}.
\end{align}

Combining \eqref{3.1.y2}, \eqref{3.1.y3}, \eqref{3.1.y4} and \eqref{3.1.y5},
we conclude that, for all $x\in(4B_j)^{\com}$,
\begin{align*}
|\phi_{+}^\ast(Ta_j)(x)|&=\sup_{t\in(0,\fz)}\lf|\phi_t*Ta_j(x)\r|
\ls\frac{r_j^{n+\gamma}}{|x-x_j|^{n+\gamma}}\frac{1}{\|\chi_{B_j}\|_{\lv}}\\
&\ls\lf[\cm(\chi_{B_j})(x)\r]^{\frac{n+\gamma}{n}}
\frac{1}{\|\chi_{B_j}\|_{\lv}},
\end{align*}
which implies that
$$\phi_{+}^\ast(Ta_j)(x)\chi_{(4B_j)^{\com}}(x)
\ls\lf[\cm(\chi_{B_j})(x)\r]^{\frac{n+\gamma}{n}}
\frac{1}{\|\chi_{B_j}\|_{\lv}}.$$
Therefore, by Proposition \ref{1.14.x3} and an argument similar to that
used in the proof of \eqref{1.26.x2}, we know that
\begin{align*}
&\az\lf\|\chi_{\{x\in\rn:\ \sum_{j\in\nn}\lz_j
\phi_{+}^\ast(Ta_j)(x)\chi_{(4B_j)^{\complement}}(x)
>\frac{\az}2\}}\r\|_{\lv}\\
&\hs\ls\frac{\az}{2}\lf\|\chi_{\{x\in\rn:\ [\sum_{j\in\nn}\frac{\lz_j}
{\|\chi_{B_j}\|_{\lv}}\{\cm(\chi_{B_j})(x)\}^{\frac{n+\gamma}{n}}]
^{\frac{n}{n+\gamma}}>(\frac{\az}{2})^{\frac{n}{n+\gamma}}\}}\r\|
_{L^{\frac{(n+\gamma)p(\cdot)}{n}}(\rn)}^{\frac{n+\gamma}{n}}\\
&\hs\ls\lf\|\lf[\sum_{j\in\nn}\frac{\lz_j\chi_{B_j}}
{\|\chi_{B_j}\|_{\lv}}\r]^{\frac{n}{n+\gamma}}\r\|
_{L^{\frac{(n+\gamma)p(\cdot)}{n}}(\rn)}^{\frac{n+\gamma}{n}}
\ls\|f\|_{H^{p(\cdot)}(\rn)},
\end{align*}
which shows \eqref{3.4.x1} holds true and hence completes the proof of Theorem \ref{bdnthm3}.
\end{proof}

\textbf{ Acknowledgement.} The authors would like to express their
deep thanks to the referee for his very careful reading and several useful
comments which improve the presentation of this article.

\bigskip

\noindent Xianjie Yan, Dachun Yang (Corresponding author), Wen Yuan
and Ciqiang Zhuo

\medskip

\noindent  School of Mathematical Sciences, Beijing Normal University,
Laboratory of Mathematics and Complex Systems, Ministry of
Education, Beijing 100875, People's Republic of China

\smallskip

\noindent {\it E-mails}: \texttt{xianjieyan@mail.bnu.edu.cn} (X. Yan)

\hspace{0.98cm} \texttt{dcyang@bnu.edu.cn} (D. Yang)

\hspace{0.98cm} \texttt{wenyuan@bnu.edu.cn} (W. Yuan)

\hspace{0.98cm} \texttt{cqzhuo@mail.bnu.edu.cn} (C. Zhuo)

\end{document}